\numberwithin{equation}{section}
\theoremstyle{plain}
\newtheorem{theorem}{Theorem}
\newtheorem{lemma}{Lemma}[section]
\newtheorem{proposition}[lemma]{Proposition}
\newtheorem{corollary}[lemma]{Corollary}
\theoremstyle{definition}
\newtheorem{definition}[lemma]{Definition}
\newtheorem{remark}[lemma]{Remark}
\newtheorem*{fact}{Fact}
\newtheorem*{examples}{Examples}
\newcommand{\ball}[1]{B\left( {#1} \right)}
\newcommand{\closure}[1]{\overline{#1}}
\newcommand{\defeq}{\mathrel{\mathop:}=}
\newcommand{\diam}{\mathrm{diam}}
\newcommand{\dist}{d}
\newcommand{\dmlge}{\, d\mlge}
\newcommand{\geodflow}{\Phi}
\newcommand{\grad}{\mathrm{grad}}
\newcommand{\image}[1]{\mathrm{im}\left( {#1} \right)}
\newcommand{\interior}[1]{\mathring{#1}}
\newcommand{\limp}[1]{\lim\nolimits_p(\grad\, {#1})}
\newcommand{\limpi}[1]{\lim\nolimits_{p_i}(\grad\, {#1})}
\newcommand{\limpb}{\lim\nolimits_p(\grad\, b_\gamma)}
\newcommand{\mathreal}{\mathbb{R}}
\newcommand{\mathrealpos}{[0,\infty)}
\newcommand{\mathnatural}{\mathbb{N}}
\newcommand{\mhdf}{\mathcal{H}}
\newcommand{\mlge}{\mathcal{L}}
\newcommand{\mllb}{\nu}
\newcommand{\mlle}{\mu}
\newcommand{\mvol}[1]{\mathrm{vol}_{#1}}
\begin{document}

\title[Riemannian cylinders without conjugate points]{On the flatness of Riemannian cylinders\\ without conjugate points}
\author[Victor Bangert \& Patrick Emmerich]{Victor Bangert and Patrick Emmerich}

\begin{abstract}
What are appropriate geometric conditions ensuring that a complete Riemannian 2-cylinder without conjugate points is flat? Ex\-amples with nonpositive curvature show that one has to assume that the ends of the cylinder open sublinearly. We show that sub\-linear growth of the ends is indeed sufficient if it is measured by the length of horocycles. This is used to extend results by K. Burns and G. Knieper \cite{BKR}, and by H. Koehler \cite{HKR}, where the opening of the ends is measured in terms of shortest noncontractible loops.
\end{abstract}

\maketitle

\section{Introduction}
In 1948 E. Hopf published the following celebrated result.
\begin{theorem}[\cite{EHC}]\label{thm:hopf}
  A Riemannian metric without conjugate points on a two-dimensional torus is flat.  
\end{theorem}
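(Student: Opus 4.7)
The plan is to follow E.~Hopf's original strategy on the unit tangent bundle $SM$ of the torus $M$. In dimension two the perpendicular Jacobi equation along a geodesic is a scalar linear ODE, and the logarithmic derivative $u=J'/J$ of any nonvanishing solution satisfies the Riccati equation $u'+u^{2}+K\circ\pi=0$ along the geodesic flow $\geodflow^{t}$, where $\pi:SM\to M$ denotes the base projection and $K$ the Gaussian curvature. The first, and most delicate, step is to produce a bounded Borel function $u:SM\to\mathreal$ satisfying this Riccati equation globally. Following L.~W.~Green, for each $v\in SM$ and each $T>0$ let $J_{v,T}$ be the unique perpendicular Jacobi field along $\gamma_{v}$ with $J_{v,T}(0)=1$ and $J_{v,T}(T)=0$; the no-conjugate-points hypothesis ensures that $J_{v,T}$ does not vanish on $[0,T)$. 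Uniform a priori bounds on the initial slope $J_{v,T}'(0)$, obtained from Sturm comparison together with the compactness of $M$, allow one to extract a limit $J_{v}^{+}$ as $T\to\infty$. Setting $u(v)\defeq (J_{v}^{+})'(0)/J_{v}^{+}(0)$ then yields the desired function.

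The second step is to integrate the Riccati equation against the Liouville measure $\mlge$ on $SM$. Since $u$ is bounded and $\mlge$ is $\geodflow^{t}$-invariant, the identity $\int_{SM}u\circ\geodflow^{t}\dmlge=\int_{SM}u\dmlge$ may be differentiated at $t=0$ under the integral sign (justified by bounded convergence, since $|u'|=|u^{2}+K\circ\pi|$ is uniformly bounded), giving $\int_{SM}u'\dmlge=0$. Hence
\[
  \int_{SM}u^{2}\dmlge \;=\; -\int_{SM}K\circ\pi\dmlge \;=\; -2\pi\int_{M}K\,dA,
\]
where the last equality uses Fubini on the circle bundle $SM\to M$. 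By Gauss--Bonnet this integral equals $-(2\pi)^{2}\chi(M)=0$, since the torus has vanishing Euler characteristic. Therefore $u$ vanishes $\mlge$-almost everywhere, and because $u$ is smooth along each orbit of $\geodflow^{t}$ it vanishes identically on a flow-invariant set of full measure. The Riccati equation then forces $K\circ\pi=0$ on that set, and continuity of $K$ upgrades this to $K\equiv 0$ on $M$.

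The main obstacle is the first step: constructing $u$ and securing its boundedness and measurability. Existence of the pointwise limit $J_{v}^{+}$ rests on a monotonicity argument for the family $(J_{v,T})_{T>0}$ via Sturm's interlacing theorem (no interior zeros being possible under the hypothesis). The uniform boundedness of the corresponding slopes is where the compactness of $M$ is essential — without it the ``stable'' Jacobi fields could degenerate, and this is precisely the phenomenon that the main body of the paper must confront on noncompact cylinders. Measurability of $u$ is then automatic, since $u(v)$ arises as a pointwise limit of functions that are continuous in $v$, and the justification of the bounded-convergence step for $\int u'\dmlge$ is a routine consequence of the a priori bound on $|u^2+K\circ\pi|$.
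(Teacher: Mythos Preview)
Your proof is correct and is precisely E.~Hopf's original argument, which the paper cites rather than reproves; the ingredients you use (the bounded Riccati solution $u$, integration of the Riccati equation against the flow-invariant Liouville measure, and Gauss--Bonnet) are exactly those the paper records in Lemma~4.1 and invokes at the end of the proof of Theorem~3. The only cosmetic difference is that you construct $u$ via Green's limit of Jacobi fields vanishing at time $T$, whereas the paper simply quotes Hopf's existence statement together with the bound $|u|\le A\coth A$ when $K>-A^{2}$.
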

This theorem and the method of its proof have attracted much interest ever since. Most importantly, by a completely different and beautiful proof, D. Burago and S. Ivanov \cite{BIT} showed in 1994 that Theorem \ref{thm:hopf} also holds for the n-dimensional torus for all $n\geq 2$. E.\ Hopf's original method is short and elegant, and has proved useful also in other situations, see e.g.\ \cite{MBC}, \cite{CKT}. It depends on the Gau\ss{}-Bonnet theorem and the invariance of the Liouville measure under the geodesic flow via an integration over the unit tangent bundle. Thus it uses the compactness of the two-torus in an essential way. If one tries to generalize this method to a noncompact manifold, one will try to apply it to an appropriate sequence of compact sets exhausting the manifold. Since the compact sets have boundaries one will be confronted with boundary terms arising from the integration. These have to be controlled in the limit. In this way, K. Burns and G. Knieper proved the following.
\begin{theorem}[\cite{BKR}]\label{thm:burnsknieper}
  Let $g$ be a complete Riemannian metric without conjugate points on the cylinder $C=S^1\times\mathreal$. Assume that
  \renewcommand{\labelenumi}{(\roman{enumi})}
  \begin{enumerate}
    \item the Gaussian curvature of $g$ is bounded below, and
    \item there exists a constant $L$ such that at every point $p\in C$ there exists a noncontractible loop of length at most $L$.
  \end{enumerate}
  Then $g$ is flat.
\end{theorem}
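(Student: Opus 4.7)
The plan is to adapt E. Hopf's Liouville-measure argument to the cylinder by exhausting $C$ by compact subcylinders $C_n$ whose boundary curves escape into the two ends, and to keep the resulting boundary terms under control using both hypotheses. The analytic input is standard: without conjugate points, every $v \in SC$ has a stable Riccati solution $u^+(v)$ along $\gamma_v$ satisfying $(u^+)' + (u^+)^2 + K = 0$ along flow lines; hypothesis (i), $K \geq -\kappa^2$, yields via Riccati comparison a uniform bound $|u^+| \leq \kappa$ on $SC$.

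The central construction is an exhaustion by subcylinders bounded by horocycles. Using (ii), pick points $p_n^\pm$ escaping each end, each carried by a noncontractible loop of length $\leq L$. A limit argument (Arzela--Ascoli, with (i) bounding the geometry) produces a ray $\gamma^\pm$ escaping each end, hence a Busemann function $b^\pm$; I would take the horocycles $\alpha_n^\pm \defeq \{b^\pm = c_n\}$ as boundaries of $C_n$. A horocycle in a surface without conjugate points has signed geodesic curvature equal to the value of $u^\pm$ along its unit normal, so $|\kappa_g(\alpha_n^\pm)| \leq \kappa$ automatically, while a diagonal argument combining (ii) with the stability of horocycles under Hausdorff limits yields $\ell(\alpha_n^\pm) \leq L$. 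Since $\chi(C_n) = 0$ and $C_n \nearrow C$, this is the correct exhaustion.

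Now integrate the Riccati equation over $SC_n$ against the Liouville measure $\mu$. Invariance of $\mu$ under the geodesic flow, expressed via Stokes' theorem for the divergence-free geodesic vector field, converts the time-derivative term into a boundary flux through $S(\partial C_n)$; combining with Gauss--Bonnet, $\int_{C_n} K\, dA = -\int_{\partial C_n}\kappa_g\, ds$, produces
\[
  \int_{SC_n} (u^+)^2 \, d\mu \;=\; 2\pi \int_{\partial C_n} \kappa_g \, ds \;-\; \int_{S(\partial C_n)} u^+(v)\,\langle v,\nu\rangle \, d\sigma .
\]
Both right-hand-side terms are bounded in absolute value by a constant depending only on $\kappa$ and $L$. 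Thus $(u^+)^2 \in L^1(SC,\mu)$ with uniform bound, and the same holds for $(u^-)^2$. Comparing the two identities, using the standard inequality $u^+ \geq u^-$ and the linear ODE $w' = -(u^+ + u^-)\,w$ satisfied along flow lines by $w \defeq u^+ - u^-$, the limit $n \to \infty$ forces $u^+ = u^-$ $\mu$-almost everywhere. The Riccati equation then gives $K \equiv 0$, so $g$ is flat.

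The principal obstacle is the second step: producing boundary curves $\alpha_n^\pm$ with \emph{simultaneously} bounded length, bounded geodesic curvature, and escape to infinity from the purely local length information in (ii). This is where the horocycle construction is essential, and it reflects the observation flagged in the abstract that the correct geometric measurement of ``sublinear opening'' of the ends is in terms of horocycle length, with the short-loop hypothesis (ii) serving merely as a convenient vehicle to deliver such a bound.
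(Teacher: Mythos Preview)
The paper does not give its own proof of this cited result; it quotes it from \cite{BKR} and then proves the stronger Theorems~3 and~4, of which Theorem~\ref{thm:burnsknieper} is the special case $\kappa=\lambda=0$. The original Burns--Knieper argument exhausts $C$ by subcylinders bounded by shortest noncontractible \emph{geodesic loops}, not horocycles; the switch to horocycles is precisely this paper's innovation. So your plan is a hybrid: Hopf's method as in \cite{BKR}, but with the horocycle exhaustion of the present paper.

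There is a genuine gap at the point you yourself flag. You assert that ``a diagonal argument combining (ii) with the stability of horocycles under Hausdorff limits yields $\ell(\alpha_n^\pm)\le L$''. This is not an argument, and the inequality goes the wrong way: the paper's Lemma~6.2 shows $l(\gamma(t))\le\mhdf^1(h^\gamma_t)$, i.e.\ the loop length bounds the horocycle length from \emph{below}. Obtaining an upper bound on horocycle length from hypotheses (i) and (ii) is exactly the content of Sections~7--8 here: one needs the area estimate from Santal\'o's formula (Proposition~\ref{prp:santalo2}) together with the isoperimetric-type inequality of Lemma~\ref{lem:isoperimetric} relating the variation of $h$ to the total curvature $\omega$. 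Only after that machinery does one get $h(t)$ bounded (indeed $h(t)/t\to 0$), and hence the boundary terms under control.

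A second issue: you write that a horocycle ``has signed geodesic curvature equal to the value of $u^\pm$ along its unit normal''. This presupposes that the Busemann function is $C^2$, which is not available here. The paper treats $b_\gamma$ as a merely Lipschitz (regular) distance function, works with $\mhdf^1$ on level sets, and replaces the pointwise Gauss--Bonnet identity you invoke by the integrated inequality~\eqref{eqn:isoperimetric}. Your Stokes-type identity for the boundary flux likewise needs the boundary to be at least $C^1$, which you have not secured. These regularity issues are not cosmetic: much of Sections~2,~5 and the Appendix exists to handle them.
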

Under the stronger assumption that $g$ has no focal points, this had been proved by L.\ W.\ Green \cite{LGS}. It is clear that some condition of the type of condition (ii) is necessary for the result to hold: There exist complete cylinders of nonpositive Gaussian curvature (hence without conjugate points), e.g.\ surfaces of revolution in $\mathreal^3$ generated by the graph of a nonconstant function $f:\mathreal\to (0,\infty)$ with $f''\geq 0$. In these examples at least one end of the cylinder ``opens at least linearly'', i.e.\ for points $p$ in this end the length $l(p)$ of a shortest noncontractible loop at $p$ grows at least linearly with the distance from $p$ to a fixed point. In \cite{HKR} H. Koehler showed that condition (ii) in Theorem \ref{thm:burnsknieper} can be weakened to a condition that allows $l(p)$ to grow logarithmically with the distance to a fixed point.

The purpose of this paper is to prove versions of Theorem \ref{thm:burnsknieper} where both conditions (i) and (ii) are considerably relaxed. Instead of the lower bound on the Gaussian curvature $K$, we only require $K$ to be bounded below by $-t^\kappa$ for some $0<\kappa<2$, and instead of the logarithmic upper bound on $l$ we allow $l$ to grow at most like $t^\lambda$ where $\lambda>0$ and $2\lambda+\frac{\kappa}{2}<1$; here $t=\dist(\cdot,p_0)$ denotes the Riemannian distance to an arbitrarily fixed point $p_0\in C$. The precise statement is:

\setcounter{theorem}{3}
\begin{theorem}
Let $g$ be a complete Riemannian metric without conjugate points on the cylinder $C=S^1\times\mathreal$. Assume that for some constants $c,\,\kappa,\,\lambda$ in $\mathrealpos$ with $2\lambda+\frac{\kappa}{2}<1$, and for all $p\in C$ we have that
\renewcommand{\labelenumi}{(\roman{enumi}')}
\begin{enumerate}
\item the Gaussian curvature $K$ of $g$ satisfies $K(p)\geq -c\,(\dist(p,p_0)+1)^\kappa$, and
\item the length $l(p)$ of the shortest noncontractible loop at $p$ satisfies $l(p)\leq c\,(\dist(p,p_0)+1)^\lambda$.
\end{enumerate}
Then $g$ is flat.
\end{theorem}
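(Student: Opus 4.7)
The plan is to extend Hopf's integration technique to this noncompact, variable-curvature setting, in the spirit of Burns--Knieper and Koehler, but with a key modification: we measure the opening of the ends of $C$ by the length of horocycles rather than by the length of shortest noncontractible loops. Lifting the geometry to the universal cover $\tilde{C}=\mathreal^2$, which carries a free $\mathbb{Z}$-action by deck transformations, the no-conjugate-points hypothesis lets one construct Busemann functions $b_+, b_-$ associated to rays going to each of the two ends of $C$. These are $C^{1,1}$ functions whose gradients $\grad\, b_\pm$ are continuous unit vector fields, and whose level sets $H^\pm_t$ are horocycles. Along the orthogonal geodesic flow, the (weak) geodesic curvatures $u_\pm$ of $H^\pm_t$ satisfy the Riccati equation $u_\pm' + u_\pm^2 + K = 0$, and a standard monotonicity argument yields $u_+\geq u_-$. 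Showing $u_+\equiv u_-$ forces the two Busemann foliations to coincide and hence that $g$ is flat.

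The core identity is obtained by integrating the Riccati equations over a region $\Omega_{S,T}\subset\tilde{C}$ bounded by horocycles at Busemann levels $+T$ and $-S$ and by one fundamental domain of the $\mathbb{Z}$-action; combining this with Gauss--Bonnet applied to the same region yields a control inequality of the schematic form
\[
\int_{\Omega_{S,T}} (u_+ - u_-)^2\,dA \leq \mathrm{length}(H^+_T)\cdot\sup_{H^+_T}|u_+| + \mathrm{length}(H^-_{-S})\cdot\sup_{H^-_{-S}}|u_-|.
\]
The task is then to show that the right-hand side vanishes (in an appropriate averaged sense) as $S,T\to\infty$. This suggests first proving a general flatness criterion: if the horocycle lengths on both sides grow sublinearly in the Busemann parameter, then the estimates close and $u_+\equiv u_-$; Theorem~4 will follow once hypotheses (i') and (ii') are shown to imply such sublinear growth.

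It then remains to translate assumptions (i') and (ii') into quantitative control of the two factors. For the horocycle length, hypothesis (ii') allows one to compare a fundamental period of $H^\pm_t$ to a noncontractible loop at a nearby point, giving $\mathrm{length}(\text{period of } H^\pm_t)\leq C(t+1)^\lambda$ via a covering argument. For $|u_\pm|$, hypothesis (i') together with Jacobi-field comparison yields the pointwise Riccati bound $|u_\pm|(p)\leq C(\dist(p,p_0)+1)^{\kappa/2}$. Inserting these, the boundary term at Busemann level $T$ is of order $T^{\lambda+\kappa/2}$; averaging over $T\in[T_0,2T_0]$, a range whose effective Busemann width is of order $T_0^{1-\lambda}$ since the ends open at rate $T^\lambda$, yields a normalized error of order $T_0^{2\lambda+\kappa/2-1}$, which vanishes exactly when $2\lambda+\kappa/2<1$. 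The main obstacle will be the horocycle-length bound: a horocycle may a priori be much longer than the shortest noncontractible loop at any one of its points, so one must exploit the structure of the Busemann foliation together with a careful covering argument to bound one fundamental period of $H^\pm_t$ by the loop-length data at nearby points. A secondary subtlety is that the boundary terms do not vanish pointwise but only after averaging, requiring a Fubini-type estimate over the exhausting sequence.
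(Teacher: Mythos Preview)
Your overall architecture is close to the paper's: reduce to a sublinear-horocycle criterion (the paper's Theorem~3), then verify that (i') and (ii') force sublinear horocycle growth. The genuine gap is your route to the horocycle-length bound. You propose to get $\mathrm{length}(H^\pm_t)\le C(t+1)^\lambda$ from (ii') alone ``via a covering argument,'' but this cannot work: in the universal cover, one fundamental period of $H^\pm_t$ joins a point $\tilde p$ to its deck translate $T\tilde p$, and while $d(\tilde p,T\tilde p)=l(p)\le C(t+1)^\lambda$, the horocycle is \emph{not} a geodesic and there is no a~priori comparison between its length and this distance. Indeed the paper's conical examples show that horocycles can even be noncompact when $l$ grows linearly, so (ii') by itself gives no upper bound on $\mathrm{length}(H^\pm_t)$. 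The curvature hypothesis (i') is not merely a device for bounding $|u_\pm|$; it is essential input into the horocycle-length estimate. Relatedly, your final averaging step is opaque: if $T$ is the Busemann parameter, the width of $[T_0,2T_0]$ is $T_0$, not $T_0^{1-\lambda}$, and the exponent $2\lambda+\kappa/2$ does not fall out of the indicated arithmetic.

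The paper's argument for the horocycle-length bound is quite different and uses (i') and (ii') together. First, a Santal\'o-type inequality bounds the area of a subcylinder between two shortest noncontractible loops by roughly $(\text{distance}+\bar L)\bar L$; with (ii') this gives $\mathrm{vol}_2(\Delta_{t_2}\setminus\Delta_{t_1})\lesssim (t_2-t_1+t_2^\lambda)t_2^\lambda$. Second, an elementary inclusion lemma shows $\Delta_{t-t^\lambda}\subseteq H_t\subseteq\Delta_{t+t^\lambda}$, transferring this to an estimate for $\mathrm{vol}_2(H_{t_2}\setminus H_{t_1})=\int_{t_1}^{t_2}h$. Third, Gauss--Bonnet on the loop-bounded $\Delta_t$ plus (i') on the thin shell $\Delta_{t+t^\lambda}\setminus H_t$ yields $\omega(t)\lesssim t^{\kappa+2\lambda}$. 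The crucial step is a Bol--Fiala type inequality $h(t_2)-h(t_1)\ge -\int_{t_1}^{t_2}\omega$, which controls how fast $h$ can \emph{decrease}; combining it with the area bound on an interval of width $t^{-\kappa/2}$ gives the pointwise estimate $h(t)\lesssim t^{2\lambda+\kappa/2}$, whence $h(t)/t\to 0$ under $2\lambda+\kappa/2<1$, and Theorem~3 finishes. A secondary issue: you assume the Busemann functions are $C^{1,1}$, but under the weak lower bound (i') this is not available; the paper works throughout with Busemann functions as regular Lipschitz distance functions and handles the measure-theoretic issues (coarea, continuity of $t\mapsto\mathcal H^1(h_t)$) directly.
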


We illustrate Theorem 4 by considering the extreme cases $\lambda=0$ or $\kappa=0$. If $l$ is assumed to be bounded, i.e.\ if we can take $\lambda=0$, we need a lower bound on the Gaussian curvature of the type $-t^\kappa$ for some $\kappa<2$. If the Gaussian curvature is bounded below, i.e.\ if we can take $\kappa=0$, we need an upper bound on $l$ of the type $t^\lambda$ for some $\lambda<\frac{1}{2}$. Finally, we note that we can replace lower curvature bounds by upper curvature bounds, cf.\ Remark \ref{rmk:upperbound}.

The improvement of Theorem \ref{thm:main} over Theorem \ref{thm:burnsknieper} is made possible by a different choice of exhaustion of $C$. While in \cite{BKR} the exhaustion is by subcylinders bounded by two geodesic loops, we use subcylinders bounded by horocycles. Here, we give a brief description of this exhaustion. The details are the content of Section 3.

Given a ray $\gamma$ in $C$, i.e.\ a (unit-speed) minimal geodesic $\gamma:\mathrealpos\to C$, we consider its Busemann function $b_\gamma:C\to\mathreal$ defined by
\[b_\gamma(p)\defeq\lim_{t\to\infty}(\dist(p,\gamma(t))-t)\]
and its horocyles
\[h^\gamma_t\defeq b_\gamma^{-1}(-t).\]
The notation is chosen so that $\gamma(t)\in h^\gamma_t$. If $C$ has no conjugate points and satisfies the condition
\begin{equation}\label{eqn:sublinear}
  \liminf_{t\to\infty}\frac{1}{t} l(\gamma(t))<1,
\end{equation}
then $b_\gamma$ is a proper function and each of its horocyles $h^\gamma_t$ is a closed curve winding once around the cylinder, see Proposition \ref{prp:regular}. Obviously, condition (ii') implies that \eqref{eqn:sublinear} is satisfied for every ray $\gamma$. To define the exhaustion we choose two rays $\gamma_1$, $\gamma_2$ converging to the two ends of $C$. For sufficiently large $t$ the horocyles $h^{\gamma_1}_t$ and $h^{\gamma_2}_t$ bound a compact subcylinder of $C$. For $t\to\infty$ these subcylinders form the exhaustion that we use in the proof of Theorem \ref{thm:main}. The fact that horocycles of a ray are equidistant makes a crucial difference in the treatment of the boundary terms arising from the integration.

As a precursor to Theorem \ref{thm:main} we prove the following result in which we replace (i') and (ii') by a sublinear bound on the lengths of horocycles. In this case we can omit any bound on the curvature.

\setcounter{theorem}{2}
\begin{theorem}
    Let $g$ be a complete Riemannian metric without conjugate points on the cylinder $C=S^1\times\mathreal$. Assume there exist two rays $\gamma_1,\, \gamma_2:\mathrealpos\to C$ converging to the different ends of $C$ such that, for $i\in\{1,2\}$, the $1$-dimensional Hausdorff measures of the corresponding horocycles $h^{\gamma_i}_t$satisfy
\[\lim_{t\to\infty}\frac{1}{t}\mhdf^1(h^{\gamma_i}_t)=0.\]
Then $g$ is flat.
\end{theorem}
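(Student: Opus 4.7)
The plan is to adapt E.\ Hopf's original integration argument to an exhaustion of $C$ by compact subcylinders bounded by horocycles, using the equidistance of horocycles together with the sublinear length hypothesis to dispose of the boundary contributions.

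First I would verify that the hypothesis $\mhdf^1(h^{\gamma_i}_t)/t\to 0$ is enough to invoke Proposition \ref{prp:regular} for both Busemann functions $b_{\gamma_i}$: for this it suffices to note that $h^{\gamma_i}_t$ is a noncontractible closed curve through $\gamma_i(t)$, so that $l(\gamma_i(t))\leq \mhdf^1(h^{\gamma_i}_t)$ and condition \eqref{eqn:sublinear} is satisfied. Then, for every sufficiently large $t$, the set
\[C(t)\defeq\{p\in C \,:\, b_{\gamma_1}(p)\geq -t \text{ and } b_{\gamma_2}(p)\geq -t\}\]
is a compact subcylinder with boundary $\partial C(t)=h^{\gamma_1}_t\cup h^{\gamma_2}_t$, and these subcylinders exhaust $C$ as $t\to\infty$.

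Next I would run Hopf's integration on $T^1C(t)$. The absence of conjugate points furnishes, almost everywhere on $T^1C$, a (say unstable) Jacobi quotient $u$ satisfying the Riccati equation $\dot u + u^2 + K = 0$ along the geodesic flow $\geodflow$. Since Liouville measure $\mlge$ is $\geodflow$-invariant, integrating Riccati over $T^1C(t)$ and applying the divergence theorem yields
\begin{equation*}
\int_{T^1 C(t)} u^2 \, d\mlge \;+\; 2\pi \int_{C(t)} K\, dA \;=\; -\int_{T^1|_{\partial C(t)} C} u\, \langle v,\nu\rangle\, d\sigma,
\end{equation*}
where $\nu$ is the outward unit normal to $\partial C(t)$ in $C$ lifted to the unit sphere bundle, and the factor $2\pi$ arises from the $S^1$-fiber integration of $K$. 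Since $\chi(C(t))=0$, Gauss--Bonnet gives $\int_{C(t)} K\, dA = -\sum_i \int_{h^{\gamma_i}_t} \kappa_g\, ds$, and the equidistance of horocycles combined with the first variation of length yields
\[\int_{h^{\gamma_i}_t} \kappa_g\, ds \;=\; -\tfrac{d}{dt}\mhdf^1(h^{\gamma_i}_t).\]
Integrating in $t$ and using $\mhdf^1(h^{\gamma_i}_t)=o(t)$, a Ces\`aro averaging argument produces a sequence $t_n\to\infty$ along which $\int_{C(t_n)} K\, dA \to 0$.

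The main obstacle is the boundary flux in the Hopf identity. For $v$ in the direction normal to the horocycle, $u(v)$ equals the horocycle geodesic curvature at its base point, and for general fiber directions $u$ is coupled to this value through the Riccati equation along asymptotic geodesics. Performing the fiber integration in the angle $\theta$ and then applying a further Ces\`aro argument in the normal direction---once again using equidistance to convert $t$-integrals of flux into differences of horocycle lengths---should show that the flux vanishes along a (sub)sequence $t_n\to\infty$. Passing to the limit gives $\int_{T^1 C} u^2\, d\mlge = 0$, hence $u\equiv 0$ almost everywhere on $T^1 C$, so $K = -\dot u - u^2 \equiv 0$ along almost every geodesic and, by continuity, everywhere. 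Thus $g$ is flat. The crucial advantage of horocycle exhaustion over the geodesic-loop exhaustion of \cite{BKR} and \cite{HKR} lies precisely here: equidistance converts the delicate $u$-weighted fluxes across a boundary curve into tractable derivatives of the horocycle lengths that the hypothesis controls.
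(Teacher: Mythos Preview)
Your outline has the right architecture but several load-bearing steps are either circular or missing.

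\textbf{Circularity in the first step.} You argue that $l(\gamma_i(t))\leq \mhdf^1(h^{\gamma_i}_t)$ because $h^{\gamma_i}_t$ is a noncontractible closed curve through $\gamma_i(t)$. But that the horocycle is a \emph{closed, noncontractible} curve is precisely the conclusion of Proposition~\ref{prp:regular}, which you are trying to invoke; a priori horoballs can be unbounded and horocycles noncompact (see the Examples in the introduction). The paper proves $l(\gamma(t))\leq \mhdf^1(h^\gamma_t)$ directly (Lemmas~\ref{lem:areaformula}--6.2), using only that the horoball is open, connected, unbounded and that $\gamma(t)$ lies on its boundary, via an Eilenberg-type counting argument for $\partial E^\gamma_t$.

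\textbf{Smoothness of horocycles.} You write $\int_{h^{\gamma_i}_t}\kappa_g\,ds=-\tfrac{d}{dt}\mhdf^1(h^{\gamma_i}_t)$ and apply Gauss--Bonnet as an equality. But $b_\gamma$ is only Lipschitz, horocycles need not be $C^1$, and corners make the first-variation identity fail. What survives is the \emph{inequality} of Lemma~\ref{lem:isoperimetric}, namely $h(t_2)-h(t_1)\geq -\int_{t_1}^{t_2}\omega(t)\,dt$, whose proof (Appendix) goes through polyhedral approximation. This inequality, not your equality, is what the paper actually uses to tie $\omega$ to $h$.

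\textbf{The boundary flux.} This is the heart of the matter and your treatment (``performing the fiber integration \ldots\ should show that the flux vanishes'') is not an argument. The identification of $u$ in the normal direction with horocycle curvature holds only for the \emph{stable} Riccati solution along corays, whereas Hopf's $u$ is a different solution, and in any case you must control $u$ in all fiber directions simultaneously. The paper does not attempt to make the flux vanish. Instead it bounds $\int_{\sigma^{-1}(h_t)}|U|\,d\mllb$ by Cauchy--Schwarz and the coarea formula to obtain the differential inequality $F\leq -2\pi\omega+2(2\pi F'h)^{1/2}$ (Corollary~\ref{cor:diffinequality}), then assumes $F>0$, integrates $F'/F^2$ and uses Lemma~\ref{lem:isoperimetric} to bound $\int\omega/F$, arriving at an inequality contradicted by $h(t)=o(t)$. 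Your Ces\`aro idea does not supply this mechanism: even if $\int_{C(t_n)}K\to 0$ along one sequence, you have no reason the flux is small along the \emph{same} sequence, and no estimate linking the flux to quantities the hypothesis controls.
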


\begin{examples}\label{examples}
  Cylinders of revolution with $K\leq 0$ and conical ends. We consider $C^\infty$-functions $f:\mathreal\to (0,\infty)$ with the following properties:
  \renewcommand{\labelenumi}{(\roman{enumi})}
  \begin{enumerate}
  \item $f''>0$ on some nonempty bounded open interval $I\subseteq\mathreal$.
  \item $f''|(\mathreal\setminus I)\equiv 0$.
  \item $f$ attains its minimum at some point $t_0\in I$.
  \end{enumerate}
Then the surface of revolution generated by the graph of $f$ is a nonflat complete cylinder $C_f\subseteq \mathreal^3$ with $K\leq 0$ and two conical ends. Each conical end of $C_f$ can be developed to a part of a sector in the Euclidian plane. In particular, if a ray $\gamma$ is contained in one of the conical ends of $C_f$, then the horocycles of $\gamma$ through points of $\gamma$ are geodesic and orthogonal to $\gamma$. If the ray $\gamma$ is part of a generating line of the cone, then these horocycles are geodesic loops if the angle of the sector is smaller than $\pi$, while they are noncompact complete geodesics if this angle is greater or equal to $\pi$. If this angle converges to zero, then also
\[\lim_{t\to\infty}\frac{1}{t}\mhdf^1(h^\gamma_t)\]
converges to zero. This shows that Theorem 3 is close to being optimal.

If the angle is $\pi$, and if $p_0\in C_f$ is an arbitrary point, then there is a constant $c>0$ such that
\[2\dist (p,p_0)-c\leq l(p)\leq 2 \dist (p,p_0)+c\]
for all points $p$ in the conical end. In particular we have
\[\lim_{t\to\infty}\frac{1}{t} l(\gamma(t))=2.\]
In Remark \ref{rmk:liminf2} we note that, in general, the condition
\[\liminf_{t\to\infty}\frac{1}{t}l(\gamma(t))<2\]
implies that all horocycles of $\gamma$ are compact. The preceding calculation shows that this bound is optimal.
\end{examples}

\noindent{}{\bfseries Plan of the paper.} In Section 2 we collect facts on proper, regular distance functions $f$ on general Riemannian $n$-manifolds. In particular, we prove the continuity of the $(n-1)$-dimensional Hausdorff measure $\mhdf^{n-1}(f^{-1}(t))$ of the level sets $f^{-1}(t)$ as a function of $t$, see Proposition \ref{prp:hcontinuous}.

In Section 3 we investigate Busemann functions on complete cylinders $C=S^1\times\mathreal$ without conjugate points. Under appropriate conditions on the length function $l$, we prove that they are proper and regular distance functions, see Proposition \ref{prp:regular}. This allows us to define the exhaustion function on $C$ that is used in the proof of Theorem 3, see Corollary \ref{cor:distancefunction}.

In Section 4 we derive the fundamental differential inequality \eqref{eqn:fundamental} that is the key to the rigidity results. Here we use E. Hopf's method and estimate the boundary terms as in the work \cite{BKR} by K. Burns and G. Knieper.

The differential inequality \eqref{eqn:fundamental} contains a function $h$ given by the sum of lengths of two horocycles. Section 5 is devoted to the crucial inequality \eqref{eqn:isoperimetric} between the variation of $h$ and the total curvature of the domain bounded by two horocycles. This inequality goes back to G. Bol \cite{GBI} and F. Fiala \cite{FFL}. It is proved in \cite{BZG} in a sligthly more special form. In the appendix we extend the arguments from \cite{BZG} so as to complete the proof of \eqref{eqn:isoperimetric}.

Sections 6 and 7 - 8 contain the proofs of Theorem 3 and Theorem 4, respectively.

\section{Regular distance functions on complete Riemannian manifolds}

We start by recalling some facts from critical point theory for distance functions on $n$-dimensional complete Riemannian manifolds $(M,g)$, cf.\ \cite{KGC} or \cite[Section 11.1]{PPR}.

Let $\dist$ denote the Riemannian distance on $M$, and let $A$ be a nonempty closed subset of $M$. Then the \emph{distance function} from $A$ is defined by
\[f(p)\defeq \dist(p,A)\defeq\min\{\dist(p,q):q\in A\}\]
for every $p\in M$. The function $f$ is Lipschitz with constant one. By Rade\-macher's theorem it is differentiable except possibly on a set $N_f$ of measure zero. Let 
\begin{equation}\label{eqn:limgrad}
  \limp{f}\defeq TM_p\cap\overline{\{\grad f(q):q\in M\setminus N_f\}}
\end{equation}
for every $p\in M$. Of course, this definition is meaningful for any Lipschitz function $f:M\to\mathreal$. If $f$ is the distance function from $A$, and if $p\in M\setminus A$, then the set $\limp{f}$ coincides with the set $S_{pA}$, defined by
\[ \{-\dot{\rho}(0)\;|\; \rho:[0,f(p)]\to M \text{ is a unit-speed geodesic}, \rho(0)=p,\, \rho(f(p))\in A\}. \]
To prove this, observe that along such a geodesic $\rho$ we have $f\circ \rho (t)=f(p)-t$, and hence $S_{pA}=\{\grad f (p)\}$ whenever $p\in M\setminus (A\cup N_f)$. The assertion follows, since a limit of shortest geodesic connections to $A$ is a shortest geodesic connection to $A$, and since $f$ is differentiable at inner points of such geodesics, cf.\ \cite[Theorem 3.8.2 (4)]{SST}. In particular, this consideration shows that $|\grad f|=1$ on the set $M\setminus(A\cup N_f)$.

\begin{definition}
A distance function $f$ is called \emph{regular} at a point $p\in M\setminus A$ if the set $\limp{f}$ is contained in an open hemisphere of the unit sphere in $TM_p$, i.e.\ if there exists a vector $w\in TM_p$ such that $g(v,w)>0$ for all $v\in\limp{f}$.
\end{definition}

In analogy to the case of smooth functions, versions of the implicit function theorem and of the isotopy lemma are valid for distance functions, cf.\ \cite[Propositions 1.7 and 1.8]{KGC}: If $r\in (0,\infty)$ is a regular value for $f$, i.e.\ if $f^{-1}(r)$ contains only regular points of $f$, then $f^{-1}(r)$ is an $(n-1)$-dimensional topological manifold. If $A\subseteq M$ is compact, i.e.\ if $f$ is proper, and if $[r_1,r_2]\subseteq (0,\infty)$ contains only regular values for $f$, then $f^{-1}([r_1,r_2])$ is homeomorphic to $f^{-1}(r_1)\times [r_1,r_2]$.

If $f:M\to\mathreal$ is a proper Lipschitz function, then the coarea formula \cite[13.4.2]{BZG} implies that the real function
\[t\mapsto\mhdf^{n-1}(f^{-1}(t))\]
is $\mlge^1$. In general it can have discontinuities, even if $f$ is a distance function. We will now show that we have continuity in the case of regular distance functions $f$. We could not find this simple, but potentially useful fact in the literature. Essentially, it follows from the following obvious continuity property of the generalized gradient of distance functions. Suppose $f:M\to\mathreal$ is the distance function from a closed set $A$ and the sequence $p_i\in M\setminus A$ converges to $p\in M\setminus A$. If a sequence $v_i\in\limpi{f}$ converges to $v\in TM_p$, then $v\in\limp{f}$, see \eqref{eqn:limgrad}.

\begin{proposition}\label{prp:hcontinuous}
  Let $M$ be a complete Riemannian manifold, let $\emptyset\neq A\subseteq M$ be compact, and suppose that $f=\dist(\cdot,A)$ is regular on $f^{-1}((a,b))$, where $(a,b)\subseteq(0,\infty)$. Then the function
\[t\in (a,b)\mapsto \mhdf^{n-1}(f^{-1}(t))\]
is continuous.
\end{proposition}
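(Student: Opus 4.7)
The approach is to construct, for $t$ close to a fixed $t_0 \in (a,b)$, a bilipschitz homeomorphism $\Psi_t \colon U_0 \to f^{-1}(t)$ from the compact level set $U_0 \defeq f^{-1}(t_0)$ whose tangential Jacobian converges to $1$ almost everywhere as $t \to t_0$; the area formula for Lipschitz maps together with dominated convergence will then give $\mhdf^{n-1}(f^{-1}(t)) \to \mhdf^{n-1}(U_0)$.

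\emph{Step 1: Gradient-like vector field and flow.} At each $p \in U_0$, regularity supplies a $w_p \in TM_p$ with $g(w_p, v) \geq 2\alpha_p > 0$ for every $v \in \limp{f}$. Extending $w_p$ smoothly to a vector field $W_p$ on a neighborhood of $p$ and applying the continuity property of the generalized gradient noted just before the proposition, the inequality $g(W_p(q), v) \geq \alpha_p$ persists for every $q$ near $p$ and every $v$ in the generalized gradient of $f$ at $q$. Since $U_0$ is compact (by properness of $f$), a finite cover and a partition of unity yield a smooth vector field $X$ on an open neighborhood $V \supseteq U_0$ with $g_q(X(q), v) \geq \alpha > 0$ for some uniform $\alpha$, every $q \in V$, and every $v$ in the generalized gradient of $f$ at $q$. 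Let $\phi_s$ denote the flow of $X$. The a.e.\ identity $\tfrac{d}{ds}f(\phi_s(q)) = g(\grad f, X)(\phi_s(q)) \geq \alpha$ integrates to $f(\phi_{s_2}(q)) - f(\phi_{s_1}(q)) \geq \alpha(s_2-s_1)$ for $s_1 \leq s_2$. Hence for $|t - t_0|$ small, a Lipschitz version of the implicit function theorem produces a Lipschitz $s(q,t)$ with $f(\phi_{s(q,t)}(q)) = t$, and $\Psi_t(q) \defeq \phi_{s(q,t)}(q)$ is a bilipschitz homeomorphism onto $f^{-1}(t)$ (surjectivity via the isotopy lemma, which identifies $f^{-1}([t_0 - \delta, t_0 + \delta])$ with $U_0 \times [-\delta, \delta]$).

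\emph{Step 2: Area formula and Jacobian convergence.} The area formula for Lipschitz maps yields
\[\mhdf^{n-1}(f^{-1}(t)) = \int_{U_0} J_{n-1}\Psi_t(q) \, d\mhdf^{n-1}(q),\]
where $J_{n-1}\Psi_t$ is the tangential Jacobian on $T_qU_0$. At $\mhdf^{n-1}$-a.e.\ $q \in U_0$ the Lipschitz hypersurface $U_0$ admits a tangent plane $T_qU_0$, $f$ is differentiable at $q$, and $T_qU_0 \subseteq \ker \grad f(q)$. Differentiating $f(\phi_{s(q,t)}(q)) = t$ in the $q$-direction along $T_qU_0$ expresses $D_qs(q,t)|_{T_qU_0}$ as $-\left.(\grad f)(\phi_s(q)) \circ D\phi_s(q)/g(\grad f, X)(\phi_s(q))\right|_{T_qU_0}$. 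As $t \to t_0$, $s(q,t) \to 0$ and $\phi_s(q) \to q$, and by continuity of $\grad f$ on its domain of definition (a standard consequence of uniqueness and compactness of minimizing geodesics to $A$) the numerator tends to $\grad f(q)|_{T_qU_0} = 0$. Combined with $D\phi_s \to \mathrm{id}$, this gives $D\Psi_t(q)|_{T_qU_0} \to \mathrm{id}_{T_qU_0}$ and hence $J_{n-1}\Psi_t(q) \to 1$. Dominated convergence, with the uniform bilipschitz bounds of Step~1, concludes $\mhdf^{n-1}(f^{-1}(t)) \to \mhdf^{n-1}(U_0)$.

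\emph{Main obstacle.} The most delicate point is the $\mhdf^{n-1}$-a.e.\ differentiability of $f$ on $U_0$, on which the pointwise Jacobian computation hinges. Non-differentiability of the distance function $f$ coincides with the ``cut locus'' of $A$, i.e., the set where the generalized gradient has more than one element. The regularity hypothesis confines the generalized gradient to an open hemisphere throughout $U_0$, and combined with standard smallness results for the cut locus of a compact set in a complete Riemannian manifold, this should render its $\mhdf^{n-1}$-trace on $U_0$ negligible.
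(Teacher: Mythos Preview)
Your overall strategy---gradient-like flow, bilipschitz correspondence between level sets, area formula, dominated convergence---is essentially the paper's. The genuine gap is precisely where you flag it: the $\mhdf^{n-1}$-a.e.\ differentiability of $f$ on $U_0$. Your proposed resolution via ``standard smallness results for the cut locus'' does not work as stated. The cut locus of a compact set can carry positive $\mhdf^{n-1}$-measure (take $A$ to be two points in $\mathreal^n$; the cut locus is a hyperplane), so there is no off-the-shelf reason its trace on a given Lipschitz hypersurface should be $\mhdf^{n-1}$-null. Regularity of $f$ has to enter the argument at this very point, not merely confine $\limp{f}$ to a hemisphere. The paper closes the gap directly: it writes each level set locally as a Lipschitz graph $x\mapsto (x,g^s(x))$ over a fixed smooth hypersurface $H$ transverse to $X$, applies Rademacher to $g^s$, and then shows via the first variation formula that wherever $g^s$ is differentiable the tangent hyperplane of the level set is orthogonal to \emph{every} element of $\limp{f}$. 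Since $\limp{f}$ consists of unit vectors lying in an open hemisphere, orthogonality to a common hyperplane forces it to be a singleton; this is exactly the $\mhdf^{n-1}$-a.e.\ differentiability you need, and it is derived from regularity rather than from any a priori measure bound on the cut locus.

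There is a related soft spot in Step~2. Your formula for $D_q s(q,t)$ evaluates $\grad f$ at $\phi_{s(q,t)}(q)\in f^{-1}(t)$, not at $q$, and you then invoke ``continuity of $\grad f$ on its domain of definition'' to pass to the limit. But for a fixed $q$ there is no reason the curve $t\mapsto \phi_{s(q,t)}(q)$ stays in that domain, so the expression may be undefined along the way. The paper avoids this by never evaluating $\grad f$ along the flow: it works with the set-valued map $p\mapsto\limp{f}$, which is everywhere defined and upper semicontinuous, and uses that once $\limp{f}$ is shown to be a singleton $\mhdf^{n-1}$-a.e.\ on each level set, convergence $Dg^{s_i}(x_i)\to Dg^{t}(x)$ follows from upper semicontinuity alone.
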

\begin{proof}
Obviously, it suffices to treat the case $[a,b]\subseteq(0,\infty)$, where $f$ is regular on the compact set $f^{-1}([a,b])\subseteq M$. Then \cite[p.\ 361]{KGC} implies that there exists a smooth unit vector field $X$ on $f^{-1}((a,b))$ and $\theta\in(0,\pi/2)$ such that
\[\angle(X_p,v)\leq\theta\]
whenever $p\in f^{-1}((a,b))$ and $v\in\limp{f}$. If $\gamma:(\alpha,\beta)\to f^{-1}((a,b))$ is a flow line of $X$ and $[s,t]\subseteq (\alpha,\beta)$, then
\begin{equation}\label{eqn:flowline}
  f\circ\gamma(t)- f\circ \gamma(s)\geq\cos\theta\, (t-s),
\end{equation}
cf.\ \cite[Lemma 1.5]{KGC}. The flow of $X$ allows us to realize the level sets $f^{-1}(t)$ for $t\in (a,b)$ locally as Lipschitz graphs with uniform Lipschitz constant as follows: If $p\in f^{-1}(t)$ we choose a local hypersurface $H\subseteq M$ through $p$ such that the flow $\varphi$ of $X$ induces a chart
\[\varphi:H\times (-\epsilon,\epsilon)\to V\subseteq f^{-1}((a,b))\]
of $M$ for some $\epsilon>0$. By \eqref{eqn:flowline}, for $H$ small enough, there exists $\delta>0$ such that each flow line $\varphi(x,\cdot):(-\epsilon,\epsilon)\to V$, $x\in H$, intersects $f^{-1}(s)$ in exactly one point, for every $s\in (t-\delta, t+\delta)$. Hence, for each such $s$ there is a unique function $g^s:H\to (-\epsilon,\epsilon)$ such that
\[\{\varphi(x,g^s(x)):x\in H\}=f^{-1}(s)\cap V.\]

Since $f$ is Lipschitz we can assume that $f\circ \varphi$ is Lipschitz with constant $L<\infty$ with respect to the metric
\[\tilde{\dist}((x,\sigma),(y,\tau))=\dist (x,y)+|\tau-\sigma|\]
on $H\times (-\epsilon,\epsilon)$, where $\dist$ denotes the distance on $M$ restricted to $H$. To prove the Lipschitz continuity of $g^s$, let $x,y\in H$. We can assume that $g^s(x)\geq g^s(y)$. We use \eqref{eqn:flowline} to estimate
\begin{eqnarray*}
  s=f(\varphi(x,g^s(x)))&\geq& f(\varphi(x,g^s(y))+ \cos\theta \,(g^s(x)-g^s(y))\\
                        &\geq& f(\varphi(y,g^s(y)) -L\, \dist(x,y) +\cos\theta\, (g^s(x)-g^s(y)).
\end{eqnarray*}
Since $f(\varphi(y,g^s(y))=s$, we conclude that
\[0\leq g^s(x)-g^s(y)\leq\frac{L}{\cos\theta}\,\dist(x,y).\]
Hence the functions $g^s$, $s\in(t-\delta,t+\delta)$, are all Lipschitz with constant $L/\cos\theta$. In particular, by Rademacher's Theorem, every $g^s$ is almost everywhere differentiable on $H$.

Next we assume that $g^s$ is differentiable at $x\in H$. We let
\[W=\{(w,Dg^s(x)w):w\in TH_x\}\subseteq T(H\times (-\epsilon,\epsilon))_{(x,g^s(x))}\]
 denote the tangent space of the graph of $g^s$ at $x$. We will use the following elementary fact, the proof of which is given below.
 \begin{fact}
   The hyperplane $D\varphi (W)$ in $TM_p$, $p=\varphi(x,g^s(x))$, is orthogonal to every element of $\limp{f}$.
 \end{fact}
\noindent{}So, if $g^s$ is differentiable at $x$, then $\limp{f}$, $p=\varphi(x,g^s(x))$, consists of a single vector. Otherwise $\limp{f}$ would contain two unit vectors $v_1=-v_2$, contradicting the regularity of $f$ at $p$.

Now the above-mentioned continuity property of $\limp{f}$ implies the following. If $(x_i,s_i)\in H\times (t-\epsilon,t+\epsilon)$ converge to $(x,t)\in H\times\{t\}$, if the $g^{s_i}$ are differentiable at $x_i$ and if $g^t$ is differentiable at $x$, then the differentials $Dg^{s_i}(x_i)$ converge to $Dg^t(x)$. Now the area formula \cite[3.2.3]{GMT} and Lebesgue's theorem on dominated convergence imply the following: If $\lambda$ is a continuous function with compact support in $V=\varphi(H\times(-\epsilon,\epsilon))$ and if $t_i\in(t-\epsilon,t+\epsilon)$ satisfy $\lim t_i=t$, then
\[\lim_{i\to\infty}\int_M \lambda\chi_{f^{-1}(t_i)}\, d\mhdf^{n-1}=\int_M \lambda \chi_{f^{-1}(t)}\, d\mhdf^{n-1}.\]
Finally, a partition of unity argument shows that
\[\lim_{i\to\infty}\mhdf^{n-1}(f^{-1}(t_i))=\mhdf^{n-1}(f^{-1}(t)).\]

To prove Fact, let $\gamma$ be a curve through $\gamma(0)=p$ such that $\dot{\gamma}(0)$ exists. Then the first variation formula implies
\begin{equation}\label{eqn:limpfsupport}
  \limsup_{t\downarrow 0}\frac{1}{t}(f\circ\gamma(t)-f(p))\leq \langle \dot{\gamma}(0),v\rangle
\end{equation}
for every $v\in\limp{f}$. If $\gamma$ takes values in $f^{-1}(s)$, then \eqref{eqn:limpfsupport} implies that
\[\langle\dot{\gamma}(0),v\rangle\geq 0\]
for every $v\in\limp{f}$. Replacing $\gamma$ by $\tilde{\gamma}(t)=\gamma(-t)$ we see that in fact
\[\langle\dot{\gamma}(0),v\rangle= 0\]
for every $v\in\limp{f}$. 
\end{proof}

\section{Busemann functions on cylinders without conjugate points}

In this section we define a proper function $b:C\to (-\infty,0]$, whose superlevels $H_t\defeq b^{-1}[-t,0]$ provide the exhaustion of $C$ by compact sets to which we will apply Hopf's method. In a neighborhood of each end of $C$ the function $b$ coincides with the Busemann function of a ray converging to this end. It is a helpful fact, proved in Proposition \ref{prp:regular}, that $b$ is regular in the sense of distance functions.

First we recall the definition of Busemann functions, cf.\ \cite[Section 22]{HBT} or \cite{PPR}. Let $(M,g)$ be a complete Riemannian manifold. A geodesic $\gamma:I\to M$, defined on a possibly infinite interval $I\subseteq\mathreal$, is \emph{minimal} if $\dist(\gamma(s),\gamma(t))=|t-s|$ for all $s,t\in I$. A minimal geodesic $\gamma:\mathrealpos\to M$ is called a \emph{ray}. A ray $\rho$ is called a \emph{coray} to a ray $\gamma$ if it is a limit of minimal geodesics starting at $\rho(0)$ and ending on $\gamma$. Clearly, given a ray $\gamma$ and a point $p$ in $M$, there exists a coray to $\gamma$ emanating from $p$. However, it need not be uniquely determined.

\begin{definition}[{\cite[equation (22.3)]{HBT}}]\label{defbusemann}
  Let $\gamma$ be a ray in $(M,g)$. Its \emph{Busemann function} $b_\gamma:M\to\mathreal$ is defined by
  \begin{equation}\label{eqn:defbusemann}
    b_\gamma(p) \defeq \lim\limits_{t\to\infty}(\dist(p,\gamma(t))-t).
  \end{equation}
In particular, we have $b_\gamma(\gamma(s))=\lim\limits_{t\to\infty}(|t-s|-t)=-s.$
\end{definition}

The level sets of a Busemann function are called \emph{horospheres}, in the case of a surface also \emph{horocycles}. Observe that the horosphere $b_\gamma^{-1}(a)$ is the limit of the distance spheres $\partial\ball{\gamma(t),t+a}$ as $t\to\infty$. One should think of $b_\gamma$ as a ``distance function from $\gamma(\infty)$'', as is justified by the following proposition.

\begin{proposition}[{\cite[equation (22.16)]{HBT}}, {\cite[Theorem 3.8.2]{SST}}]\label{prp:busemann}
  Let $\gamma$ be a ray in $(M,g)$, then
  \renewcommand{\labelenumi}{(\roman{enumi})}
  \begin{enumerate}
    \item 
    $b_\gamma$ is 1-Lipschitz. If $a\in b_\gamma(M)$, then 
      \[b_\gamma(p)=a+\dist(p, b_\gamma^{-1}(a)) \quad\text{for all }p\text{ with }b_\gamma(p)\geq a.\]
    
    \item
    A unit-speed geodesic $\rho:\mathrealpos\to M$ is a coray to $\gamma$ if and only if 
    \[ b_\gamma\circ\rho(t) = b_\gamma\circ\rho(0) -t \quad \text{for all } t\in\mathrealpos.\]  
    
    \item
     If $p\in M$, then
     \[\limpb = \{-\dot{\rho} (0) : \rho \text{ is a coray to } \gamma, \rho(0)=p\}.\]
  \end{enumerate}
\end{proposition}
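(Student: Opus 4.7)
The plan is to derive all three parts from a single construction. Given $p\in M$ and $t_n\to\infty$, the minimal geodesics $\sigma_n:[0,L_n]\to M$ from $p$ to $\gamma(t_n)$, where $L_n=\dist(p,\gamma(t_n))$, admit an Arzel\`a--Ascoli subsequential limit $\rho:\mathrealpos\to M$ by completeness of $M$. This $\rho$ is a unit-speed ray, and by construction a coray to $\gamma$ from $p$. For every fixed $s\geq 0$, minimality of $\sigma_n$ together with convergence on compacta gives
\[\dist(\rho(s),\gamma(t_n))-t_n = L_n-s-t_n+o(1)\longrightarrow b_\gamma(p)-s,\]
so $b_\gamma\circ\rho(s)=b_\gamma(p)-s$ along this coray. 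This linear-decay identity is the engine for everything that follows.

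Part (i) then splits into two pieces. The 1-Lipschitz property of $b_\gamma$ is inherited from each $p\mapsto\dist(p,\gamma(t))-t$ in the limit. The formula $b_\gamma(p)=a+\dist(p,b_\gamma^{-1}(a))$ for $b_\gamma(p)\geq a$ splits into two inequalities; the $\leq$ direction follows from the 1-Lipschitz property applied at a nearest point $q\in b_\gamma^{-1}(a)$, while the $\geq$ direction uses the coray $\rho$ above, which drops from $b_\gamma(p)$ to $a$ in time exactly $b_\gamma(p)-a$, exhibiting a curve of that length from $p$ to $b_\gamma^{-1}(a)$.

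Part (ii) reuses the same computation at $\rho(0)$ in place of $p$ for the ``only if'' direction, combined with the 1-Lipschitz property to force equality from the one-sided bound. The ``if'' direction uses (i): the hypothesis shows that $\rho|_{[0,t]}$ realizes the distance from $\rho(0)$ to $b_\gamma^{-1}(b_\gamma(\rho(0))-t)$, so by ODE uniqueness $\rho$ coincides with the coray constructed above. Part (iii) then combines both: given any coray $\rho$ at $p$ constructed as the limit of $\sigma_n$, the negatives $-\dot\sigma_n(0)$ are gradients of the smooth functions $\dist(\cdot,\gamma(t_n))$ near $p$, and since $b_\gamma$ differs from $\dist(\cdot,\gamma(t_n))-t_n$ by $o(1)$ uniformly on compacta, suitable approximation shows $-\dot\rho(0)\in\limpb$. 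Conversely, any $v\in\limpb$ is a limit $\lim\grad b_\gamma(q_i)$ at differentiability points $q_i\to p$, and a further Arzel\`a--Ascoli extraction on the corays from each $q_i$ produces a coray $\rho$ at $p$ with $-\dot\rho(0)=v$.

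The main technical obstacle is the last step: showing that at a point $q$ of differentiability of $b_\gamma$, every coray from $q$ is tangent to $-\grad b_\gamma(q)$. This rests on a first-variation argument for the 1-Lipschitz function $b_\gamma$ together with (ii), applied in both directions along the coray, which pins down $\grad b_\gamma(q)=-\dot\rho(0)$. The two subsequential extractions---in $n$ along $\gamma$ and in $i$ at varying base points---must be made compatible by a diagonal argument, but everything else reduces to manipulation of triangle inequalities once the linear-decay identity above is secured.
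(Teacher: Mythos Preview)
The paper does not prove this proposition; it is quoted from \cite[(22.16)]{HBT} and \cite[Theorem~3.8.2]{SST} without argument, so there is no in-paper proof to compare against. Your outline is largely the standard one, but it contains a real gap in the ``if'' direction of (ii). You claim that since $\rho|_{[0,t]}$ realizes the distance from $\rho(0)$ to the horosphere $b_\gamma^{-1}\big(b_\gamma(\rho(0))-t\big)$, ``by ODE uniqueness $\rho$ coincides with the coray constructed above.'' This does not follow: there may be several corays from $\rho(0)$, and several minimizers from $\rho(0)$ to a given horosphere, so uniqueness for the geodesic equation is irrelevant here. What the linear-decay hypothesis does yield, via the $1$-Lipschitz property and a no-corner argument, is that for each $s>0$ the tail $\rho|_{[s,\infty)}$ is a coray to $\gamma$: concatenate $\rho|_{[0,s]}$ with any coray $\tau$ from $\rho(s)$; along the concatenation $b_\gamma$ decreases at unit speed, hence the concatenation is minimal, hence smooth at $\rho(s)$, forcing $\tau=\rho|_{[s,\infty)}$. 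Passing from ``$\rho|_{[s,\infty)}$ is a coray for all $s>0$'' to ``$\rho$ itself is a coray'' then requires Busemann's theorem that a coray to a coray is a coray (equivalently, stability of the coray relation under moving the initial point), which is precisely what \cite{HBT} supplies and which your sketch omits.

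There is a second, smaller issue in the $\supseteq$ direction of (iii). The vectors $-\dot\sigma_n(0)$ equal $\grad\,\dist(\cdot,\gamma(t_n))(p)$ only where that distance function is differentiable, which you have not arranged at $p$, and uniform $C^0$-closeness of $\dist(\cdot,\gamma(t_n))-t_n$ to $b_\gamma$ says nothing about gradients. The route actually taken in the references (and paralleled in the paper's Section~2 discussion for ordinary distance functions, citing \cite[Theorem~3.8.2(4)]{SST}) is that $b_\gamma$ is differentiable at every interior point $\rho(s)$, $s>0$, of a coray, with $\grad b_\gamma(\rho(s))=-\dot\rho(s)$; letting $s\downarrow 0$ then exhibits $-\dot\rho(0)\in\limpb$. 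This differentiability at interior coray points is itself a nontrivial input.
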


By (i), for every $a\in\mathreal$, the Busemann function $b_\gamma$ and the distance function from the horosphere $b_\gamma^{-1}(a)$ coincide, up to the constant $a$, on the set $b_\gamma^{-1}([a,\infty))$. In particular, $b_\gamma$ is almost everywhere differentiable with unit gradient. A Busemann function is called \emph{regular} if for every $a\in\mathreal$ the function $b_\gamma|(b_\gamma^{-1}[a,\infty))$ is regular in the sense of distance functions.

\begin{definition}
  Let $(C,g)$ be a complete Riemannian cylinder. For every point $p\in C$ let $l(p)$ denote the minimal length of noncontractible loops with basepoint $p$.
\end{definition}

\begin{proposition}\label{prp:regular}
  Let $(C,g)$ be a complete Riemannian cylinder without conjugate points and let $\gamma:\mathrealpos\to C$ be a ray such that
  \begin{equation}\label{eqn:liminf}
    \liminf_{t\to\infty}\frac{1}{t}l(\gamma(t))<1.
  \end{equation}
Then $b_\gamma$ is a proper and regular Busemann function. Each of its level sets is homeo\-morphic to $S^1$ and generates the fundamental group of the cylinder.
\end{proposition}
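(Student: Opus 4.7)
Working in the universal cover $\pi\colon\tilde C\to C$ (simply connected, without conjugate points), the map $\exp_{\tilde p}$ is a diffeomorphism for every $\tilde p\in\tilde C$; hence any two points of $\tilde C$ are joined by a unique minimising geodesic. Let $T$ generate the deck group and fix a lift $\tilde\gamma$ of $\gamma$. Using \eqref{eqn:liminf}, choose $\epsilon\in(0,1)$ and a sequence $t_i\to\infty$ with $l(\gamma(t_i))<(1-\epsilon)t_i$, so that $\dist(\tilde\gamma(t_i),T^{\pm 1}\tilde\gamma(t_i))<(1-\epsilon)t_i$.

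The central point is to establish, via a careful analysis in $\tilde C$, that for every $p\in C$ there is a lift $\tilde p$, which I will call \emph{distinguished}, with $b_\gamma(p)=b_{\tilde\gamma}(\tilde p)$, and such that along the subsequence $t_i$ the minimising geodesic in $\tilde C$ from $\tilde p$ to $T^k\tilde\gamma(t_i)$ is uniquely shortest when $k=0$, with a gap of order $\epsilon t_i$ separating it from the $k=\pm1$ values. The estimate I have in mind combines Proposition \ref{prp:busemann}(i) (which identifies $b_{\tilde\gamma}(T^{-k}\tilde p)-b_{\tilde\gamma}(\tilde p)$ with the distance from $T^{-k}\tilde p$ to the horocycle of $\tilde\gamma$ through $\tilde p$) with the explicit upper bound $b_{\tilde\gamma}(T\tilde\gamma(t_i))\leq-\epsilon t_i$ obtained from the triangle inequality and the sublinear loop hypothesis. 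As a consequence, every coray from $p$ lifts through $\tilde p$ to a coray in $\tilde C$ ending at $\tilde\gamma$ itself, not at any other translate $T^k\tilde\gamma$.

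Granted this, properness, regularity, and the topology of the level sets all follow. The level set $b_\gamma^{-1}(a)$ coincides via distinguished lifts with a $T$-invariant subset of the horocycle $\tilde h_a\defeq b_{\tilde\gamma}^{-1}(a)\subset\tilde C$; this $\tilde h_a$ is the Hausdorff limit of the embedded Jordan curves $\partial\ball{\tilde\gamma(t),t+a}$ (smooth because $\exp$ is a diffeomorphism) and is itself an embedded topological line on which $T$ acts freely by translation, so the quotient is homeomorphic to $S^1$ and represents the generator of $\pi_1(C)$. At the end of $C$ opposite to $\gamma$, a separating noncontractible simple closed curve $\sigma$ yields $\dist(p,\gamma(t))\geq\dist(p,\sigma)+\dist(\sigma,\gamma(t))\geq\dist(p,\sigma)+t-C_0$, forcing $b_\gamma\to+\infty$. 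At the end containing $\gamma$, any escaping sequence $p_n\to\infty$ with $b_\gamma(p_n)\geq -M$ would have distinguished lifts $\tilde p_n$ trapped in a compact portion of $\tilde h_{-M}$, a contradiction. For regularity, Proposition \ref{prp:busemann}(iii) identifies $\limpb$ with initial velocities of corays from $p$; these lift through the distinguished lift $\tilde p$ to corays to $\tilde\gamma$ in $\tilde C$, and since $\exp_{\tilde p}$ is a diffeomorphism these initial directions form a closed arc in the unit sphere at $\tilde p$ of angular extent strictly less than $\pi$ (the $\epsilon$-gap ruling out corays approaching antipodal directions via $T^{\pm 1}\tilde\gamma$), hence fit in an open hemisphere.

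The main obstacle is the quantitative key step in the second paragraph: extracting a usable linear gap from the weak $\liminf$ condition \eqref{eqn:liminf}. Because \eqref{eqn:liminf} holds only along the subsequence $t_i$, care is needed to show both that a distinguished lift exists and that the strict inequality separating $k=0$ from $k=\pm 1$ persists after passing to the limits defining $b_\gamma$; this is precisely the place where the strict inequality $\liminf_{t\to\infty}t^{-1}l(\gamma(t))<1$, rather than $\leq 1$, is essential.
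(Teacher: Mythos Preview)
Your central claim---that every $p$ admits a distinguished lift $\tilde p$ with $d(\tilde p,\tilde\gamma(t_i))$ beating $d(\tilde p,T^{\pm1}\tilde\gamma(t_i))$ by a margin of order $\epsilon t_i$---is too strong. Nothing in \eqref{eqn:liminf} prevents a tie $d(\tilde p,\tilde\gamma(t_i))=d(\tilde p,T\tilde\gamma(t_i))$: already on the flat cylinder the point antipodal to $\gamma(0)$ on the waist circle has this property for every $t$, and on a non-flat cylinder such a tie can produce two genuinely distinct corays from $p$, one limiting to $\tilde\gamma$ and the other to $T\tilde\gamma$. Your inference that ``every coray from $p$ lifts through $\tilde p$ to a coray ending at $\tilde\gamma$ itself'' therefore fails at exactly the points where regularity is in question, and with it both the identification of $b_\gamma^{-1}(a)$ with a $T$-quotient of $\tilde h_a$ (the assignment $p\mapsto\tilde p$ is neither well-defined nor continuous at ties) and the angular argument for regularity. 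The separate assertion that the Hausdorff limit $\tilde h_a$ of the circles $\partial B(\tilde\gamma(t),t+a)$ is an embedded line is also unjustified; Hausdorff limits of Jordan curves need not be manifolds.

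The paper's route is different: it accepts that there may be two corays from $p$ (to adjacent lifts) and shows only that they cannot be antipodal. If they were, they would concatenate to a complete geodesic $\rho$ with both ends following $\gamma$ to infinity; comparing $\rho$ with the shortest noncontractible loop $\delta_t$ at $\gamma(t)$ gives $r_++r_-<l(\gamma(t))$ (since $g$ has no conjugate points and $\rho|[-r_-,r_+]$ is homotopic to an arc of $\delta_t$), while the triangle inequality gives $t\leq d(\gamma(0),p)+r_\pm+\tfrac{1}{2}l(\gamma(t))$; combining these yields $t\leq d(\gamma(0),p)+l(\gamma(t))$ for all large $t$, contradicting \eqref{eqn:liminf}. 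Properness is then established by a separate argument, and the topology of the level sets is obtained from the isotopy lemma for regular distance functions rather than from any direct analysis of $\tilde h_a$.
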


In the proof of this proposition we will need the following general lemma and two remarks on basic properties of shortest noncontractible loops.

\begin{lemma}[{\cite[p.\ 191, Lemma 1]{DBA}}]\label{lem:bleecker}
  Every simple closed curve on a cylinder is either contractible or generates the fundamental group.
\end{lemma}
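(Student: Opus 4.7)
My plan is to reduce the statement to the Jordan--Schoenflies theorem on $S^2$. I would first identify $C = S^1 \times \mathreal$ with $S^2 \setminus \{p_+, p_-\}$, the twice-punctured sphere (the standard homeomorphism: stereographic projection identifies $S^2 \setminus \{p_+, p_-\}$ with $\mathreal^2 \setminus \{0\}$, which in polar coordinates is $S^1 \times \mathreal$). A simple closed curve $\alpha \subset C$ then becomes a simple closed curve in $S^2$, and Jordan--Schoenflies furnishes two closed topological disks $D_+, D_- \subset S^2$ with $D_+ \cap D_- = \alpha$ and $D_+ \cup D_- = S^2$.

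I would then split into two cases according to the locations of the added points $p_+, p_-$. If both $p_\pm$ lie in the same disk, say $D_+$, then $D_-$ is entirely contained in $C$, so $\alpha$ bounds a closed topological disk in $C$ and is contractible. If $p_+$ and $p_-$ lie in different disks (say $p_\pm \in D_\pm$ after relabeling), then $D_+ \setminus \{p_+\}$ and $D_- \setminus \{p_-\}$ each lie in $C$ and are each homeomorphic to a closed half-cylinder $S^1 \times [0, \infty)$ with common boundary $\alpha$. These two half-cylinders cover $C$ and each deformation retracts onto $\alpha$; a Seifert--Van Kampen argument applied to a thickening of this decomposition identifies $\pi_1(C) \cong \mathbb{Z} \ast_{\mathbb{Z}} \mathbb{Z}$ where both edge maps are isomorphisms, so $\pi_1(C) \cong \mathbb{Z}$ and $[\alpha]$ is a generator.

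The main point that requires care is the topological compactification itself: verifying that one-point compactifying each end of $C$ yields $S^2$, and that for a simple closed curve $\alpha \subset C$ the two complementary disks in $S^2$ must fall into one of the above two configurations relative to $p_\pm$. Both facts are elementary, but they are the only places where the topology of a cylinder (as opposed to an arbitrary surface) enters the argument; everything else is a direct application of Jordan--Schoenflies and the standard computation of $\pi_1$ of a half-cylinder.
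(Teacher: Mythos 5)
Your argument is correct and complete. Note, however, that the paper does not prove this lemma at all: it is quoted verbatim from Bleecker \cite{DBA} (p.~191, Lemma~1) and used as a black box, so there is no in-paper proof to compare with; what you have written is a self-contained substitute for the citation. Your route --- compactify $C=S^1\times\mathreal$ by two points to get $S^2$, apply the Jordan--Schoenflies theorem to obtain the two closed disks $D_\pm$ with $D_+\cap D_-=\alpha$, and then distinguish whether the two added points lie in the same open disk (whence $\alpha$ bounds a disk inside $C$ and is contractible) or in different ones (whence $C$ is the union of two half-open annuli meeting along $\alpha$, and van Kampen gives $\pi_1(C)\cong\mathbb{Z}\ast_{\mathbb{Z}}\mathbb{Z}\cong\mathbb{Z}$ generated by $[\alpha]$) --- is the standard way to prove this fact, and every step checks out: the punctures cannot lie on $\alpha$ since $\alpha\subseteq C$, a closed disk minus an interior point deformation retracts onto its boundary circle, and the Schoenflies theorem (not just the Jordan curve theorem) is exactly what guarantees both the closed-disk structure and the bicollar of $\alpha$ needed to thicken the two pieces into open sets for van Kampen; it is worth saying that last point explicitly, since with only the Jordan curve theorem the "thickening" step would not be automatic.
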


\begin{remark}\label{rmk:loops}
 Each shortest noncontractible geodesic loop on a Riemannian cylinder is simple and generates the fundamental group.
\end{remark}

To prove the remark, we first show that shortest noncontractible geodesic loops are simple. Let $\delta:[0,l(p)]\to C$ be such a loop based at a point $p\in C$. Assume $\delta$ is not simple. Then there exist $0<s_1<s_2<l(p)$ such that $\delta(s_1)=\delta(s_2)$. The restriction of $\delta$ to the set $[0,s_1]\cup[s_2,l(p)]$ is a contractible loop by minimality of $\delta$. Hence $\delta|[s_1,s_2]$ is noncontractible. This implies that $\delta|[0,s_2]*(-\delta|[0,s_1])$ is a noncontractible loop based at $p$ and of length $l(p)$ that can be shortened, a contradiction. This proves that such loops are simple. Lemma \ref{lem:bleecker} implies that they are generators of the fundamental group.

\begin{definition}\label{def:shortestloops}
  Let $\gamma$ be a ray on a complete Riemannian cylinder $C$ without conjugate points. For all $t\geq 0$, let $\delta_t:[0,l(\gamma(t))]\to C$ be the shortest noncontractible geodesic loop based at $\gamma(t)$ and having the same orientation as $S^1\times\{ 0 \}\subseteq C$.
\end{definition}

\begin{remark}[{\cite[p.\ 631, lines 13-17]{BKR}}]
  If $s < t$, then $\delta_s$ and $\delta_t$ are disjoint and bound a compact subcylinder.
\end{remark}

\begin{proof}[Proof of Proposition \ref{prp:regular}]
 (a) We first prove that on a complete simply connected surface $(S,g)$ without conjugate points every ray $\gamma:\mathrealpos\to S$ has precisely one coray starting at any given point $p\in S$: If $p\in\image{\gamma}$, then our claim is an obvious consequence of the fact that for any pair of points $p,\,q\in S$ there is precisely one geodesic from $p$ to $q$ up to parametrization. If $p\notin\image{\gamma}$, we introduce polar coordinates on $TM_p\setminus\{0_p\}$ and write
\[\exp_p^{-1}\circ\gamma\,(t) = r(t)\, (\cos \varphi(t),\sin \varphi(t)).\]
Using the uniqueness of geodesics between pairs of points again, we see that $\varphi:\mathrealpos\to\mathreal$ is injective, hence strictly monotonic. This implies the uniqueness of corays in $S$.

(b) Next we show that for every $p\in C$ there are at most two corays to $\gamma$ emanating from $p$. Let $r_i\to\infty$ be a sequence of nonnegative real numbers, and let $\rho_i:[0,L_i]\to C$ be minimal geodesics starting at $p$ and ending at $\gamma(r_i)$. Denote by $\widetilde{\rho}_i$ the lifts of the $\rho_i$ by the universal Riemannian covering $\exp_p:TC_p\to C$, starting at $\widetilde{\rho}_i(0)=0_p$. If $r_i<r_j$ then the minimal geodesics $\rho_i, \gamma|[r_i,r_j], -\rho_j$ constitute a simple closed curve on the cylinder. Lemma \ref{lem:bleecker} implies that, for all $i$, the endpoints $\widetilde{\rho}_i(L_i)$ of the lifts lie on two neighbouring lifts of the ray $\gamma$. By (a), the sequence $(\dot{\rho}_i(0))$ has at most two limit points.

(c) Here we prove the regularity of the Busemann function $b_\gamma$. Suppose $b_\gamma$ is not regular at a point $p\in C$. By (b) and Proposition \ref{prp:busemann} (iii) this would imply the existence of two $\gamma$-corays $\rho_+$ and $\rho_-$ emanating from $p$ with opposite initial vectors $(\rho_+)\dot{} (0)=-(\rho_-)\dot{}(0)$. They would compose a complete geodesic $\rho:\mathreal\to C$, $\rho|[0,\infty)=\rho_+$ that in both directions approaches the same end as the ray $\gamma$. As we will see, this contradicts assumption \eqref{eqn:liminf}.

For every $t\in[0,\infty)$ let $C_t$ denote the closed (noncompact) subcylinder that is bounded by $\delta_t$, see Definition \ref{def:shortestloops}, and that contains $\gamma(0)$. By \cite[Lemma 12]{HKR} and assumption \eqref{eqn:liminf}, we have $C_t\uparrow C$ for $t\uparrow\infty$. Since $\gamma((t,\infty))\subseteq C\setminus C_t$ for every nonnegative $t$, we see that the $\gamma$-corays $\rho_\pm$ intersect the loops $\delta_t$ for sufficiently large $t\in\mathrealpos$. Fix such $t$ and set $r_+\defeq\min\rho_+^{-1}(\image{\rho_+}\cap\image{\delta_t})$, so that $q_+\defeq\rho_+(r_+)$ is the first point of intersection of $\rho_+$ and $\delta_t$ along $\rho_+$. Let $r_-$ and $q_-$ be analogously defined for the ray $\rho_-$. By construction we have three curves starting at $q_-$ and ending at $q_+$ that are simple and pairwise disjoint up to their endpoints: the curve $\rho|[-r_-,r_+]$, and the two curves obtained by splitting the loop $\delta_t$ at the points $q_-$ and $q_+$. Since $\delta_t$ generates the fundamental group,  Lemma \ref{lem:bleecker} implies that $\rho|[-r_-,r_+]$ is homotopic to one of the other two curves. Thus it is homotopic to a curve of length less than $l(\gamma(t))$. Since $g$ has no conjugate points this implies
\begin{equation}\label{eqn:regineq1}
  r_+ + r_- < l(\gamma(t)).
\end{equation}

On the other hand, by the triangle inequality
\[
 \dist (\gamma(0),\gamma(t))\leq \dist(\gamma(0),p) + r_\pm + \dist (q_\pm,\gamma(t)),
\]
we have the following lower estimates for $r_+$ and $r_-$ respectively,
\begin{equation}\label{eqn:regineq2}
  t\leq \dist (\gamma(0),p)+r_\pm+l(\gamma(t))/2.
\end{equation}
Combining \eqref{eqn:regineq1} and \eqref{eqn:regineq2} we obtain
\[
  t\leq \dist (\gamma(0),p) + l(\gamma(t))
\]
for all sufficiently large $t\in\mathrealpos$. This contradicts assumption \eqref{eqn:liminf}.

(d) Here we show that $b_\gamma$ is proper. It suffices to show that there is $\epsilon>0$ and a sequence $t_i\to\infty$ such that for every $p\in C$ we have
\begin{equation}\label{eqn:escape}
  \dist (p,\gamma(0))\geq 2 t_i \Longrightarrow |b_\gamma(p)|\geq \epsilon t_i.
\end{equation}
Choose $l_0>l(\gamma(0))/2$ and let $C_{+}$ and $C_-\supseteq\gamma([l_0,\infty))$ be the two unbounded components of $C\setminus B(\gamma(0),l_0)$. If $p\in C_{+}$ and $t\geq l_0$, then
\[\dist (p,\gamma(t))\geq \big(\dist(p,\gamma(0))-l_0\big)+\big(t-l_0\big),\]
and hence $b_\gamma(p)\geq \dist (p,\gamma(0))-2 l_0$ by \eqref{eqn:defbusemann}.

For $p\in C_-$ we have to use assumption \eqref{eqn:liminf}, by which we can choose $0<\epsilon<1$ and a sequence $t_i\to\infty$ such that $l(\gamma(t_i))\leq 2(1-\epsilon)t_i$ for all $i$. Since $b_\gamma$ is $1$-Lipschitz and $b_\gamma(\delta_{t_i}(0))=b_\gamma(\gamma(t_i))=-t_i$, we have that on the loop $\delta_{t_i}$ the Busemann function $b_\gamma$ is bounded above by $-t_i+l(\gamma(t_i))/2\leq -\epsilon t_i$. We will now show that this implies that $b_\gamma(p)\leq -\epsilon t_i$ for all $p\in C\setminus C_{t_i}$, and hence for all $p\in C_-$ such that $\dist(p,\gamma(0))\geq 2t_i$: If $p\in C\setminus C_{t_i}$ there exists $j>i$ such that $t_j>t_i$ and $p\in C_{t_j}\setminus C_{t_i}$. This is true since $C_t\uparrow C$ for $t\uparrow\infty$, cf.\ the proof of (c). Since, by (c), $b_\gamma$ is a regular distance function, \cite[Prop.\ 1.6]{KGC} implies that $b_\gamma$ does not have any local maxima, and hence
\[b_\gamma(p)\leq \max_{\partial (C_{t_j}\setminus C_{t_i})} b_\gamma\leq \max \{-\epsilon t_i, -\epsilon t_j\}= -\epsilon t_i\]
as claimed.

(e) From the isotopy lemma for regular distance functions we conclude that for all compact intervals $[a_1,a_2]\subseteq\mathreal$ there exists a homeomorphism $h:b_\gamma^{-1}([a_1,a_2])\to b_\gamma^{-1}(a_1)\times [a_1,a_2]$; in particular, all horocycles are homeomorphic. This implies, that $C$ is homeomorphic to $b_\gamma^{-1}(a)\times\mathreal$ for any $a\in\mathreal$. Hence each $b_\gamma^{-1}(a)$ is connected and noncontractible, and thus generates the fundamental group of the cylinder.
\end{proof}

Next we describe the construction of the exhaustion functions used in this paper. We assume that there exist two rays $\gamma_1,\gamma_2:\mathrealpos\to C$, converging to the different ends and such that
\begin{equation}\label{eqn:liminfi}
  \liminf_{t\to\infty}\frac{1}{t}l(\gamma_i(t))<1 \quad\text{for }i=1,2.
\end{equation}
Our exhaustion function $b$ will depend on the choice of these rays. First note that by replacing $\gamma_1$ (or $\gamma_2$) by an appropriate subray, we may assume
\begin{equation}\label{eqn:disjoint}
  b_{\gamma_1}^{-1}((-\infty,0))\cap b_{\gamma_2}^{-1}((-\infty,0))=\emptyset.
\end{equation}
This can be seen using Proposition \ref{prp:regular}: If $t_0\defeq\min b_{\gamma_1}|b_{\gamma_2}^{-1}(0)$, then we have $b_{\gamma_1}^{-1}((-\infty,t_0))\cap b_{\gamma_2}^{-1}(0) =\emptyset$. Since $\gamma_1$ and $\gamma_2$ converge to different ends, we have $b_{\gamma_1}^{-1}((-\infty,t_0))\cap b_{\gamma_2}^{-1}((-\infty,0))=\emptyset$. Now replacing $\gamma_1$ by its subray $t\mapsto\gamma_1(t_0+t)$ yields \eqref{eqn:disjoint}.

\begin{definition}
Assuming \eqref{eqn:liminfi} and \eqref{eqn:disjoint} we define $b:C\to(-\infty,0]$ by
\begin{equation}\label{eqn:defb}
  b(p)=\left\{
\begin{array}{ll}
  b_{\gamma_1}(p) &\mathrm{if}\;b_{\gamma_1}(p)\leq 0,\\
  b_{\gamma_2}(p) &\mathrm{if}\;b_{\gamma_2}(p)\leq 0,\\
  0 &\mathrm{otherwise}.
\end{array}
\right.
\end{equation}
 We will call $b$ the \emph{Busemann function associated to the rays $\gamma_1$, $\gamma_2$} satisfying \eqref{eqn:liminfi} and \eqref{eqn:disjoint}.
\end{definition}

\begin{corollary}\label{cor:distancefunction}
 Let $(C,g)$ be a complete Riemannian cylinder without conjugate points, and assume that there exist rays $\gamma_1,\gamma_2:\mathrealpos\to C$ converging to the different ends of $C$ and satisfying \eqref{eqn:liminfi} and \eqref{eqn:disjoint}. Let $b$ be the Busemann function associated to $\gamma_1$, $\gamma_2$. Then:
 \renewcommand{\labelenumi}{(\roman{enumi})}
 \begin{enumerate}
 \item For every $t\in\mathrealpos$ the superlevel $H_t\defeq b^{-1}([-t,0])\subseteq C$ is a compact subcylinder; its boundary $h_t\defeq b^{-1}(-t)$ is the union of the two horocycles $h^{\gamma_1}_t\defeq b_{\gamma_1}^{-1}(-t)$ and $h^{\gamma_2}_t\defeq b_{\gamma_2}^{-1}(-t)$.
  \item For all $t\in\mathrealpos$ and all $p\in H_t\setminus H_0$ we have $b(p)=\dist (p,h_t) -t$. The function $b$ is regular in the sense of distance functions on the set $C\setminus H_0$.
\end{enumerate}
The notation is chosen so that $\gamma_i(t)\in h_t$ for all $t\in\mathrealpos$ and $i=1,2$.
\end{corollary}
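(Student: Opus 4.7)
The plan is to reduce both statements to the properties of the individual Busemann functions $b_{\gamma_1}$, $b_{\gamma_2}$ established in Proposition \ref{prp:regular}, combined with the disjointness hypothesis \eqref{eqn:disjoint}. The first step I would take is to verify the set-theoretic identity
\[ H_t \;=\; b_{\gamma_1}^{-1}([-t,\infty)) \,\cap\, b_{\gamma_2}^{-1}([-t,\infty)) \]
by a short case analysis on the three clauses of \eqref{eqn:defb}, using that \eqref{eqn:disjoint} forces at least one of $b_{\gamma_1}(p)$, $b_{\gamma_2}(p)$ to be nonnegative at every $p\in C$. For $t>0$ this immediately yields $h_t = h^{\gamma_1}_t \cup h^{\gamma_2}_t$, and these two horocycles are disjoint, since any common point would lie in $\{b_{\gamma_1}<0\}\cap\{b_{\gamma_2}<0\}$.

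For compactness in (i), I would argue that each $b_{\gamma_i}$ tends to $-\infty$ along $\gamma_i$ (immediate from $b_{\gamma_i}\circ\gamma_i(s)=-s$) and to $+\infty$ on the opposite end, using the properness of $b_{\gamma_i}$ and the fact that $\gamma_1,\gamma_2$ converge to different ends. Hence each $b_{\gamma_i}^{-1}([-t,\infty))$ avoids an entire neighborhood of one end of $C$, so $H_t$ is bounded and closed, and thus compact by Hopf--Rinow. For the subcylinder structure I would invoke Proposition \ref{prp:regular}, by which each $h^{\gamma_i}_t$ is a simple closed curve generating $\pi_1(C)$, together with Lemma \ref{lem:bleecker}: a compact region of $C=S^1\times\mathreal$ bounded by two disjoint curves each generating $\pi_1(C)$ is a topological annulus.

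For (ii), let $p\in H_t\setminus H_0$, so $b(p)\in[-t,0)$. By \eqref{eqn:defb} and \eqref{eqn:disjoint}, exactly one of $b_{\gamma_1}(p)$, $b_{\gamma_2}(p)$ is negative; say $b(p)=b_{\gamma_1}(p)<0$, so that $b_{\gamma_2}(p)\geq 0$. Proposition \ref{prp:busemann}(i), applied with $a=-t$, gives
\[ \dist(p,h^{\gamma_1}_t)=b_{\gamma_1}(p)+t = b(p)+t \leq t, \]
while the $1$-Lipschitz property of $b_{\gamma_2}$ combined with $b_{\gamma_2}(p)\geq 0$ yields $\dist(p,h^{\gamma_2}_t)\geq b_{\gamma_2}(p)+t\geq t$. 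Combining, $\dist(p,h_t)=\dist(p,h^{\gamma_1}_t)=b(p)+t$, as claimed. Regularity of $b$ on $C\setminus H_0$ is then automatic: the open sets $\{b_{\gamma_1}<0\}$ and $\{b_{\gamma_2}<0\}$ cover $C\setminus H_0$, and on each of them $b$ coincides with the corresponding Busemann function, whose regularity is asserted by Proposition \ref{prp:regular}.

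The main obstacle I anticipate is the topological identification of $H_t$ as a subcylinder: one has to cope with the possibility that $H_0$ has nonempty interior (the \emph{middle} region where both $b_{\gamma_i}>0$), so that for $t>0$ the compact set $H_t$ consists of this middle region together with two ``collars'' $\{-t\leq b_{\gamma_i}\leq 0\}$, bounded on the outside by $h^{\gamma_1}_t\cup h^{\gamma_2}_t$. Once this topological picture is pinned down, the rest is routine bookkeeping with the definition of $b$ and a single application of Proposition \ref{prp:busemann}(i).
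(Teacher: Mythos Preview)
Your proposal is correct and follows exactly the route the paper intends: the paper's proof is the single sentence ``The statement is an immediate consequence of Proposition~\ref{prp:regular} and Proposition~\ref{prp:busemann}(i),'' and you have simply unpacked what this entails, including the set-theoretic identity for $H_t$, the compactness via properness of the $b_{\gamma_i}$, the comparison $\dist(p,h^{\gamma_2}_t)\geq t\geq \dist(p,h^{\gamma_1}_t)$ needed for part~(ii), and the additional appeal to Lemma~\ref{lem:bleecker} for the annulus structure. There is nothing to correct.
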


\begin{proof}
The statement is an immediate consequence of Proposition \ref{prp:regular} and Proposition \ref{prp:busemann} (i).
\end{proof}

\begin{remark}\label{rmk:liminf2}
  Actually, the construction of an exhaustion function $b$ as above is possible under the following weaker condition \eqref{eqn:liminf2} on the two rays $\gamma_1$ and $\gamma_2$,
  \begin{equation}\tag{3.6'}\label{eqn:liminf2}
    \liminf_{t\to\infty}\frac{1}{t}l(\gamma_i(t))<2\quad\text{for }i=1,2.
  \end{equation}
\end{remark}

This condition \eqref{eqn:liminf2} is sharp since the examples in Section 1 show that horocycles of a ray on $C$ can be noncompact if \eqref{eqn:liminf2} is not satisfied for this ray. However, when we apply Corollary \ref{cor:distancefunction} in the proofs of Theorem \ref{thm:premain} and \ref{thm:main}, we even have the assumption that $\lim_{t\to\infty}\frac{1}{t}l(\gamma(t))=0$. So, we do not need the sharp version.

The proof of this sharp version is analogous to the proof of Proposition \ref{prp:regular} with the following additional remark concerning part (c): If $p=\rho(0)$ is in an appropriate neighborhood of $\gamma(\infty)$, then inequality \eqref{eqn:regineq1} can be improved to the equality
\begin{equation}\label{eqn:regineq1strich}\tag{3.3'}
  r_++r_-+\dist (q_+,\gamma(t))+\dist (q_-,\gamma(t))=l(\gamma(t)).
\end{equation}

\section{The fundamental inequality}
We consider a complete cylinder $(C,g)$ without conjugate points that admits two rays $\gamma_1$, $\gamma_2$ converging to the different ends of $C$ and satisfying \eqref{eqn:liminfi} and \eqref{eqn:disjoint}. Let $b:C\to(-\infty,0]$ be the Busemann function associated to $\gamma_1$ and $\gamma_2$ defined by \eqref{eqn:defb}. We apply Hopf's method to the compact superlevels $H_t=b^{-1}([-t,0])$ and estimate the boundary terms as in \cite[1.3]{BKR}. This leads to our fundamental inequality \eqref{eqn:fundamental}.

In the following $\sigma:T^1M\to M$ denotes the unit tangent bundle of a Riemannian manifold $M$. Hopf's method is based on the following observation made in \cite{EHC}.

\begin{lemma}[\cite{EHC}]
Let $(S,g)$ be a complete surface without conjugate points. Then there exists a Borel measurable and locally bounded function $u:T^1S\times\mathreal\to\mathreal$ with the following properties: For every $v\in T^1S$ the function $u_v(t)\defeq u(v,t)$ is a solution of the Riccati equation along the geodesic $\gamma_v$ with $\dot{\gamma}_v(0)=v$, i.e.\
\begin{equation}\label{eqn:riccati}
(u_v)'(t)+(u_v)^2(t)+K\circ \gamma_v(t)=0
\end{equation}
for all $t\in\mathreal$. The function $u$ is invariant under the geodesic flow $\geodflow$, i.e.\
\begin{equation}\label{eqn:invariance}
u(v,s+t)=u(\geodflow^sv,t)
\end{equation}
for all $v\in T^1S$ and all $s,t\in\mathreal.$  
\end{lemma}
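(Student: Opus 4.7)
The strategy is to build a single function $\tilde u\colon T^1S\to\mathreal$ from ``forward stable'' scalar Jacobi fields and then set $u(v,t)\defeq\tilde u(\geodflow^t v)$, so that the invariance \eqref{eqn:invariance} is automatic from the semigroup property of $\geodflow$. Along $\gamma_v$ every normal Jacobi field is a scalar $y(t)$ solving $y''+(K\circ\gamma_v)\,y=0$, and where $y$ is nowhere zero the substitution $w\defeq y'/y$ gives the Riccati equation $w'+w^2+K\circ\gamma_v=0$.

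For the construction, fix $v\in T^1S$ and, for each $T>0$, let $y^+_{v,T}$ (resp.\ $y^-_{v,T}$) denote the scalar Jacobi field along $\gamma_v$ normalized by $y^\pm_{v,T}(0)=1$ and $y^+_{v,T}(T)=0=y^-_{v,T}(-T)$. Absence of conjugate points makes these well-defined and strictly positive on $[0,T)$ respectively $(-T,0]$. Set $u_T^\pm(v)\defeq(y^\pm_{v,T})'(0)$. Sturm comparison gives that $T\mapsto u_T^+(v)$ is strictly decreasing and $T\mapsto u_T^-(v)$ is strictly increasing, and the no-conjugate-points hypothesis forces $u_S^-(v)\leq u_T^+(v)$ for all $S,T>0$ (otherwise a nontrivial linear combination of $y^-_{v,S}$ and $y^+_{v,T}$ would vanish twice). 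Hence both monotone limits
\[\tilde u(v)\defeq\lim_{T\to\infty}u_T^+(v),\qquad u^-(v)\defeq\lim_{T\to\infty}u_T^-(v)\]
exist in $\mathreal$ with $u^-\leq\tilde u$ on $T^1S$. Smooth dependence of ODE solutions on initial data makes each $u_T^\pm$ continuous, so $\tilde u$ is a decreasing limit of continuous functions, hence upper semicontinuous and in particular Borel; the sandwich $u^-\leq\tilde u\leq u_1^+$ yields local boundedness. Setting $u(v,t)\defeq\tilde u(\geodflow^t v)$, Borel measurability and local boundedness transfer to $u$ via the continuity of $\geodflow$.

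To verify \eqref{eqn:riccati}, fix $v$ and let $\eta$ be the $C^\infty_{\mathrm{loc}}$-limit of the monotone family $\{y^+_{v,T}\}_{T>0}$, which exists by monotonicity of the initial data and is a Jacobi field with $\eta(0)=1$, $\eta'(0)=\tilde u(v)$ and $\eta\geq 0$ on $[0,\infty)$. Since a Jacobi field on a surface without conjugate points has at most one zero, a zero of $\eta$ on $[0,\infty)$ would force a sign change and contradict $\eta\geq 0$; so $\eta>0$ on $[0,\infty)$. The same bookkeeping, now performed at basepoint $\geodflow^t v$, yields
\[u_T^+(\geodflow^t v)=\frac{(y^+_{v,T+t})'(t)}{y^+_{v,T+t}(t)},\]
so passing to the limit $T\to\infty$ gives $\tilde u(\geodflow^t v)=\eta'(t)/\eta(t)$ at every $t$ with $\eta(t)\neq 0$. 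If $\eta(t_0)=0$ for some $t_0<0$, this same formula forces $u_T^+(\geodflow^{t_0}v)\to\pm\infty$, contradicting the finiteness of $\tilde u(\geodflow^{t_0}v)$ already established. Consequently $\eta>0$ on all of $\mathreal$, the identity $\tilde u(\geodflow^t v)=\eta'(t)/\eta(t)$ holds for every $t\in\mathreal$, and \eqref{eqn:riccati} follows from the scalar substitution applied to $\eta''+(K\circ\gamma_v)\,\eta=0$.

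The delicate step is the non-vanishing of $\eta$ on $(-\infty,0)$: the forward-limit construction gives positivity on $[0,\infty)$ almost for free, but on the backward half-line positivity must be extracted indirectly, via the local boundedness of $\tilde u$ at every $\geodflow^t v$. That local boundedness is in turn the main geometric consequence of the no-conjugate-points hypothesis, entering through the two-sided comparison $u^-\leq\tilde u$.
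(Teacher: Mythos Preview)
Your overall strategy is sound and gives a self-contained construction, but the monotonicity and the comparison inequality are systematically reversed. Already for $K\equiv 0$ one has $y^+_{v,T}(t)=1-t/T$, so $u_T^+(v)=-1/T$ is \emph{increasing} in $T$; likewise $u_T^-(v)=1/T$ is decreasing, and the true inequality is $u_T^+(v)\le u_S^-(v)$. Your linear-combination argument actually proves this direction: if $u_S^-(v)<u_T^+(v)$ then $z\defeq y^-_{v,S}-y^+_{v,T}$ satisfies $z(0)=0$ and $z'(0)<0$, while $z(-S)=-y^+_{v,T}(-S)<0$ and $z(T)=y^-_{v,S}(T)>0$, forcing zeros in both $(-S,0)$ and $(0,T)$ in addition to the one at $0$. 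With the signs corrected, $\tilde u$ is an \emph{increasing} limit of continuous functions (hence lower semicontinuous, still Borel), the sandwich reads $u_1^+\le\tilde u\le u_1^-$ with both ends continuous, and the rest of your argument---in particular the nonvanishing of $\eta$ on $(-\infty,0)$ via finiteness of $\tilde u(\geodflow^{t_0}v)$---goes through unchanged.

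By contrast, the paper does not reconstruct $u$: it cites Hopf \cite{EHC} for everything except local boundedness, and then obtains that from a curvature comparison. On a relatively compact $V\subseteq T^1S$ one picks $A>0$ with $K(\gamma_v(t))>-A^2$ for all $(v,t)\in V\times[-1,1]$, and a variant of Hopf's estimate yields $|u(v,0)|\le A\,\frac{\cosh A}{\sinh A}<A+1$ for $v\in V$. Your route is more self-contained and makes transparent why local boundedness requires no curvature hypothesis (it comes from squeezing between the continuous functions $u_1^\pm$); the paper's route is terser and produces an explicit local bound in terms of the local curvature bound.
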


\begin{proof}
  Except for the local boundedness this follows from \cite{EHC}. In \cite{EHC} Hopf proves that $u$ satisfies the uniform bound
\[|u(v,t)|\leq A\]
provided the Gaussian curvature satisfies $K>-A^2$ for some constant $A>0$ everywhere. In our case, for every relatively compact set $V\subseteq T^1S$ we can find a constant $A>0$ such that $K(\gamma_v(t))>-A^2$ for all $(v,t)\in V\times [-1,1]$. Under these assumptions a slight variation of Hopf's argument proves that
\[|u(v,0)|\leq A\,\frac{\cosh A}{\sinh A}<A+1\]
for all $v\in V$. This together with \eqref{eqn:invariance} implies the local boundedness of $u$.
\end{proof}

Let $U=u(\cdot,0):T^1C\to\mathreal$. We intend to integrate the function $U^2$ over $\sigma^{-1}(H_t)$ with respect to the Liouville measure $\mlle$ on $T^1C$. The Liouville measure $\mlle$ is invariant under the geodesic flow, and $\mlle$ is the product of the Lebesgue measure on the fibers of $T^1C$ with the Riemannian volume $\mvol{2}$ on $C$. To treat the boundary terms we use the measure $\mllb$ on $T^1C$ that is the product of the Lebesgue measure on the fibers of $T^1C$ with the one-dimensional Hausdorff measure $\mhdf^1$ on $C$. 

The following lemma is a version of \cite[1.3]{BKR}. The fact that horospheres are equidistant simplifies the proof of our case. On the other hand, the boundary $h_t$ of $H_t$ may be less regular than assumed in \cite{BKR}.

\begin{lemma}[cf.\ {\cite[1.3]{BKR}}]\label{lem:keylemma}
Let $(C,g)$ be a complete Riemannian cylinder without conjugate points and let $b:C\to(-\infty,0]$ be the Busemann function associated to the two rays $\gamma_1$, $\gamma_2$. Then we have 
   \begin{equation}\label{eqn:keylemma}
    \int_{\sigma^{-1}(H_t)} U^2 \,d\mlle \;\leq \; -2\pi \int_{H_t}K\,d\mvol{2} \;+\; 2 \int_{\sigma^{-1}(h_t)} \left|U\right|\,d\mllb
  \end{equation}
for almost every $t\in\mathrealpos$.
\end{lemma}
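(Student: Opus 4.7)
The plan is to apply Hopf's method \cite{EHC} as generalized by Burns--Knieper \cite{BKR}. The key step is to combine the Riccati equation \eqref{eqn:riccati} with the flow invariance \eqref{eqn:invariance} to obtain a pointwise identity on $T^1C$. Differentiating $u(\geodflow^sv,0)=u(v,s)$ in $s$ at $s=0$ and inserting \eqref{eqn:riccati} gives
\[
\Xi U=-U^2-K\circ\sigma
\]
on $T^1C$, where $\Xi$ denotes the generator of the geodesic flow $\geodflow$. Integrating this identity against $\mlle$ over $\sigma^{-1}(H_t)$, Fubini on the $S^1$-fibers immediately produces the bulk curvature term $\int_{\sigma^{-1}(H_t)}K\circ\sigma\,d\mlle=2\pi\int_{H_t}K\,d\mvol{2}$, and it remains to control the total-derivative contribution $\int_{\sigma^{-1}(H_t)}\Xi U\,d\mlle$ by boundary data on $\sigma^{-1}(h_t)$.

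I would do this via a Santal\'o-type disintegration of $\mlle$. For $\mlle$-almost every $v\in\sigma^{-1}(H_t)$ there are a unique entry vector $w\in\partial_-:=\{w\in\sigma^{-1}(h_t):\langle w,n_{\mathrm{in}}\rangle>0\}$ and a time $s\in[0,\ell(w)]$ with $v=\geodflow^sw$, where $n_{\mathrm{in}}$ is the inward unit conormal to $h_t$ in $C$ and $\ell(w)$ is the first exit time; the Jacobian of this parametrization is $d\mlle=\langle w,n_{\mathrm{in}}\rangle\,d\mllb(w)\,ds$. On each chord the Riccati equation integrates to
\[
\int_0^{\ell(w)}u_w^2(s)\,ds=u_w(0)-u_w(\ell(w))-\int_0^{\ell(w)}K\circ\gamma_w\,ds\le|U(w)|+|U(\geodflow^{\ell(w)}w)|-\int_0^{\ell(w)}K\circ\gamma_w\,ds.
\]
Re-assembling by Santal\'o returns the curvature pieces to the bulk integral $\int_{\sigma^{-1}(H_t)}K\circ\sigma\,d\mlle=2\pi\int_{H_t}K\,d\mvol{2}$, while the two endpoint terms are handled via $\langle w,n_{\mathrm{in}}\rangle\le1$ together with the observation that the first-return map $w\mapsto\geodflow^{\ell(w)}w$ sends $\partial_-$ onto $\partial_+:=\{w\in\sigma^{-1}(h_t):\langle w,n_{\mathrm{in}}\rangle<0\}$ preserving the contact measure $\langle w,n_{\mathrm{in}}\rangle\,d\mllb$; together they contribute at most $2\int_{\sigma^{-1}(h_t)}|U|\,d\mllb$, which yields \eqref{eqn:keylemma}.

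The almost everywhere restriction on $t$ reflects the need for $h_t$ to be a regular level set on which this chord decomposition is meaningful. Proposition \ref{prp:regular} and Corollary \ref{cor:distancefunction} ensure that $b$ is a regular distance function on $C\setminus H_0$, so each $h_t$ is a Lipschitz $1$-manifold by the analysis of Section 2; Proposition \ref{prp:hcontinuous} supplies continuity of $\mhdf^1(h_t)$, so the boundary measure behaves well in $t$. The main obstacle is precisely to verify the Santal\'o-type formula and the attendant area formula for the first-return map when $h_t$ is only Lipschitz rather than $C^1$. One route is to exploit the smooth unit vector field constructed in the proof of Proposition \ref{prp:hcontinuous} to realize $h_t$ locally as a Lipschitz graph with uniform constant, reducing the disintegration to a direct application of the area formula; alternatively one can appeal to Federer's generalized divergence theorem for rectifiable boundaries, together with flow-invariance of $\mlle$ (which makes $\Xi$ divergence-free) to justify the boundary identity for $\int\Xi U\,d\mlle$ directly.
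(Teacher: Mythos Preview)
Your Santal\'o/chord approach differs from the paper's and, beyond the Lipschitz-boundary issues you already flag, carries a gap you did not: the assertion that $\mu$-almost every $v\in\sigma^{-1}(H_t)$ admits an entry vector on $h_t$ is unjustified. Nothing in the hypotheses prevents $\sigma^{-1}(H_t)$ from containing a flow-invariant set $\Omega$ of positive Liouville measure, and orbits in $\Omega$ never meet $h_t$, so they are invisible to the chord decomposition. The repair is easy---on $\Omega$ flow invariance together with local boundedness of $U$ gives $\int_\Omega\Xi U\,d\mu=0$, hence $\int_\Omega U^2\,d\mu=-\int_\Omega K\circ\sigma\,d\mu$ exactly, while the semi-trapped sets have measure zero because $\mu(\sigma^{-1}(H_t))<\infty$---but it must be stated.

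The paper sidesteps both this and the Lipschitz Santal\'o question by a softer argument. It writes $\frac{1}{s}\int_0^s u'(v,s')\,ds'=\frac{1}{s}\bigl(U(\Phi^sv)-U(v)\bigr)$ and uses $\Phi$-invariance of $\mu$ to turn the integral of this over $\sigma^{-1}(H_t)$ into a difference of integrals of $U$ over $\Phi^s(\sigma^{-1}(H_t))$ and $\sigma^{-1}(H_t)$; this is bounded by the integral of $|U|$ over the symmetric difference, which lies in the thin shell $\sigma^{-1}\bigl(b^{-1}([-t-s,-t+s])\bigr)$ since $b$ is $1$-Lipschitz. The coarea formula for $b$ (unit gradient a.e.) rewrites the shell integral as $\frac{1}{s}\int_{t-s}^{t+s}\bigl(\int_{\sigma^{-1}(h_{s'})}|U|\,d\nu\bigr)\,ds'$, and the Lebesgue differentiation theorem produces $2\int_{\sigma^{-1}(h_t)}|U|\,d\nu$ for almost every $t$. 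No chord parametrization, no first-return map, no trapped-set analysis is needed, and the ``almost every $t$'' arises from Lebesgue differentiation rather than from regularity of individual level sets.
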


\begin{proof}
 Let $t\in\mathrealpos$. Integrating the Riccati equation \eqref{eqn:riccati}, we get
\[\int_{\sigma^{-1}(H_t)} \left\{\frac{1}{s}\int_0^s  u'(v,s') + u^2(v,s')+K(\gamma_v({s'}))\, d{s'} \right\}\, d\mlle(v) =0.\]
for all $s>0$. We let $s\to 0$ to obtain
\begin{eqnarray}\label{eqn:keylproof}
\int_{\sigma^{-1}(H_t)} U^2(v) \,d\mlle(v) \; \leq &-& \int_{\sigma^{-1}(H_t)} K(\sigma (v))\, d\mlle(v) \nonumber \\
&+& \limsup\limits_{s\to 0} \left| \int_{\sigma^{-1}(H_t)} \left\{\frac{1}{s}\int_0^s u'(v, s')\, d{s'} \right\} d\mlle(v)\right|.
\end{eqnarray}
By \eqref{eqn:invariance} and the $\geodflow$-invariance of $\mu$ we have
\begin{eqnarray*}
  \int_{\sigma^{-1} (H_t)}\left\{ \int_0^s u'(v,s')\, d{s'}\right\} d\mlle (v)&=&\int_{\sigma^{-1} (H_t)} \{U(\geodflow^s v)-U(v)\}\, d\mlle(v)\\
  &=&\int_{\geodflow^s(\sigma^{-1} (H_t))}U \, d\mlle - \int_{\sigma^{-1} (H_t)} U \, d\mlle.
\end{eqnarray*}
For the symmetric difference of the domains of the last two integrals we have
\begin{eqnarray*}
  \geodflow^s(\sigma^{-1} (H_t)) \;\Delta\; \sigma^{-1} (H_t)
  &\subseteq& \Phi(\sigma^{-1}(h_t)\times[-s,s])\\
  &\subseteq& \sigma^{-1}(\{p\in C : \dist (p, h_t)\leq s \})\\
  &\subseteq& (b\circ \sigma)^{-1}([t-s,t+s]),
\end{eqnarray*}
where the last inclusion holds since $b$ is $1$-Lipschitz. This implies
\begin{eqnarray*}
  \left| \frac{1}{s}\int_{\sigma^{-1} (H_t)} \left\{ \int_0^s u'(v, s')\, ds' \right\}\, d\mlle (v)\right|
 \leq \frac{1}{s}\int_{(b\circ\sigma)^{-1}([t-s,t+s])} |U(v)| \, d\mlle(v)\\
  = \frac{1}{s}\int_{t-s}^{t+s} d{s'} \left\{\int_{\sigma^{-1}(h_{s'})} |U(v)| \,d\mllb(v)\right\},
\end{eqnarray*}
where the last equality follows from the coarea formula \cite[13.4.6]{BZG} for the function $b$, which has unit gradient almost everywhere in $C\setminus H_0$, see Corollary \ref{cor:diffinequality}. Since
\[\lim\limits_{s\to 0} \frac{1}{s}\int_{t}^{t+s} d{s'} \left\{\int_{\sigma^{-1}(h_{s'})} |U| \, d\mllb\right\} = \int_{\sigma^{-1}(h_t)} |U| \, d\mllb\]
for almost every $t\in\mathrealpos$ by \cite[VII.4.14 (Hauptsatz)]{JEM}, the conclusion follows from inequality \eqref{eqn:keylproof}.  
\end{proof}

\begin{definition}\label{def:bfunctions}
  Let $b$ be the Busemann function associated to the two rays $\gamma_1$, $\gamma_2$. For all $t\in\mathrealpos$ denote the length of the boundary $h_t$ of $H_t$ by
\begin{equation}\label{eqn:hsum}
   h(t)\defeq \mhdf^1\left(h_t\right)=\mhdf^1(h^{\gamma_1}_t)+\mhdf^1(h^{\gamma_2}_t),
\end{equation}
the volume of $H_t$ by $v(t)\defeq \mvol{2}(H_t)$, the total curvature of $H_t$ by $\omega(t)\defeq \int_{H_t}K\,d\mvol{2}$, and the $U^2$-integral over $\sigma^{-1}(H_t)$ by $F(t)\defeq \int_{\sigma^{-1}(H_t)}U^2 \, d\mlle$.
\end{definition}

\begin{corollary}\label{cor:diffinequality}
  The function $h:\mathrealpos\to\mathrealpos$ is continuous. The functions $v:\mathrealpos\to\mathrealpos$, $\omega:\mathrealpos\to\mathreal$ and $F:\mathrealpos\to\mathrealpos$ are absolutely continuous and differentiable almost everywhere. For all $t\in\mathrealpos$
  \begin{eqnarray*}
  v(t)&=& v(0) + \int_0^t h({s}) \, d{s},\\
  \omega(t)&=& \omega(0)+\int_0^t d{s} \left\{ \int_{h_{s}} K\, d\mhdf^1 \right\},\\
  F(t)&=& F(0) + \int_0^t d{s}\left\{\int_{\sigma^{-1}(h_{s})}U^2 \, d\mllb \right\}.
\end{eqnarray*}
The integrands on the right hand side are almost everywhere the derivatives of the functions $v$, $\omega$, $F$, respectively. With these notions inequality \eqref{eqn:keylemma} implies the following differential inequality which is valid almost everywhere:
\begin{equation}\label{eqn:fundamental}
                            F \leq -2\pi \omega + 2 (2\pi F'h)^{1/2}.
\end{equation}
\end{corollary}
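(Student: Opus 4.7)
The plan is to verify the four assertions of the corollary in order (continuity of $h$, the three integral representations with the identification of the derivatives, and finally the differential inequality), each following from a single standard tool already established in the paper.

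\textbf{Continuity of $h$.} Since $b_{\gamma_i}$ is a regular Busemann function (Proposition \ref{prp:regular}) and each of its horocycles is compact (being homeomorphic to $S^1$), for every $s_0<0$ the function $p\mapsto b_{\gamma_i}(p)-s_0$ coincides on $\{b_{\gamma_i}\geq s_0\}$ with the distance function from the compact set $b_{\gamma_i}^{-1}(s_0)$, by Proposition \ref{prp:busemann}(i). This distance function is regular on the preimage of $(0,\infty)$, so Proposition \ref{prp:hcontinuous} applies and yields that $u\mapsto\mhdf^1(b_{\gamma_i}^{-1}(u))$ is continuous on $(s_0,\infty)$; letting $s_0\to-\infty$ gives continuity on all of $\mathreal$. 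Summing the two contributions as in \eqref{eqn:hsum} gives the continuity of $h$.

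\textbf{Integral representations.} Corollary \ref{cor:distancefunction} says that on $C\setminus H_0$ the function $b$ is a regular distance function, hence 1-Lipschitz with $|\grad b|=1$ almost everywhere on $C\setminus H_0$. Apply the coarea formula \cite[13.4.6]{BZG} to $b$ with the three integrands $\chi_{H_t\setminus H_0}$, $K\chi_{H_t\setminus H_0}$, and $\Psi\chi_{H_t\setminus H_0}$, where $\Psi(p):=\int_{T^1C_p}U^2\,d\text{Leb}$. The first gives the representation of $v$ directly; the second is legitimate because $K$ is continuous hence bounded on the compact set $H_t$; the third requires only the local boundedness of $U$ established in the lemma of Section 4, and produces $F(t)-F(0)=\int_0^t\int_{h_s}\Psi\,d\mhdf^1\,ds$, which equals $\int_0^t\int_{\sigma^{-1}(h_s)}U^2\,d\mllb\,ds$ after another application of Fubini using that $\mllb$ is the fiberwise product of Lebesgue and $\mhdf^1$. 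Since $h$ is continuous and the inner integrands in the $\omega$- and $F$-formulas are locally integrable in $s$ (by what we just derived together with the finiteness of $\omega(t)$ and $F(t)$ on compact intervals), all three functions are absolutely continuous and Lebesgue's differentiation theorem identifies the integrands as the almost everywhere derivatives.

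\textbf{The differential inequality.} Fix $t$ in the full-measure set where Lemma \ref{lem:keylemma} applies and where $F$ is differentiable with derivative equal to $\int_{\sigma^{-1}(h_t)}U^2\,d\mllb$. By Cauchy--Schwarz,
\[
\int_{\sigma^{-1}(h_t)}|U|\,d\mllb\;\leq\;\Bigl(\int_{\sigma^{-1}(h_t)}U^2\,d\mllb\Bigr)^{1/2}\bigl(\mllb(\sigma^{-1}(h_t))\bigr)^{1/2}.
\]
The first factor is $(F'(t))^{1/2}$, and by the fiberwise product description of $\mllb$ we have $\mllb(\sigma^{-1}(h_t))=2\pi\,\mhdf^1(h_t)=2\pi h(t)$, since each fiber $T^1C_p$ has Lebesgue measure $2\pi$. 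Substituting the resulting bound $\int_{\sigma^{-1}(h_t)}|U|\,d\mllb\leq(2\pi F'(t)\,h(t))^{1/2}$ into \eqref{eqn:keylemma} yields \eqref{eqn:fundamental} at $t$.

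\textbf{Anticipated obstacle.} The only point requiring care is ensuring that Proposition \ref{prp:hcontinuous} is genuinely available for each of $b_{\gamma_1}$ and $b_{\gamma_2}$ on their \emph{entire} domains (in particular near $H_0$, where the piecewise definition of $b$ could cause concern): this is handled cleanly by noting that the proposition is applied to each $b_{\gamma_i}$ separately on all of $C$, not to the composite function $b$, so no issue arises at the seam. Everything else is a bookkeeping exercise combining coarea, Fubini, and Cauchy--Schwarz.
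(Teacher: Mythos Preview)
Your proof is correct and follows essentially the same approach as the paper: continuity of $h$ via Proposition~\ref{prp:hcontinuous} and the regularity of the Busemann functions, the three integral representations via the coarea formula (with Lebesgue's differentiation theorem identifying the a.e.\ derivatives), and the differential inequality via Cauchy--Schwarz applied to the boundary term in \eqref{eqn:keylemma}. Your treatment is slightly more explicit than the paper's in a few places---applying Proposition~\ref{prp:hcontinuous} to each $b_{\gamma_i}$ separately rather than to $b$, spelling out the Fubini step for $F$ via the fiberwise integral $\Psi$, and computing $\mllb(\sigma^{-1}(h_t))=2\pi h(t)$---but these are expository refinements, not a different argument.
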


\begin{proof}
  The continuity of $h$ is a consequence of Proposition \ref{prp:hcontinuous} and the regularity of $b$, see Corollary \ref{cor:distancefunction}. The first equality follows from the coarea formula \cite[13.4.2]{BZG}, since, on the set $C\setminus H_0$, the function $b$ is almost everywhere differentiable with unit gradient. The second and third equality follow from \cite[13.4.6]{BZG}, which is a corollary of the coarea formula, for the integrable functions $K:M\to\mathreal$ and $U^2:T^1M\to\mathreal$. In particular, $s\mapsto\int_{h_{s}} K \, d\mhdf^1$ and $s\mapsto\int_{\sigma^{-1}(h_{s})}U^2 \, d\mllb$ are in $\mlge^1$. The remaining properties of the functions $v$, $\omega$ and $F$ now follow from \cite[VII.4.14 (Hauptsatz)]{JEM}. Finally, applying the Cauchy-Schwarz inequality to the last term of inequality \eqref{eqn:keylemma} we see that \eqref{eqn:keylemma} implies our fundamental inequality \eqref{eqn:fundamental}.
\end{proof}

\section{The lengths of horocycles and total curvature}

If $b:C\to (-\infty,0]$ is a Busemann function associated to two rays $\gamma_1$, $\gamma_2$ and if $b$ is smooth on $C\setminus H_0$, then the length $h(t)$ of the boundary $h_t$ of $H_t=b^{-1}([-t,0])$ satisfies
\[h'(t)=\int_{h_t} k \, d\mhdf^1,\]
where $k$ denotes the geodesic curvature of $h_t=\partial H_t$ with respect to the inward pointing normal. Using the Gauss-Bonnet formula we obtain
\[h'(t)=-\omega(t)\]
and
\[h(t_2)-h(t_1)=-\int_{t_1}^{t_2}\omega(t)\, dt\]
for all $t_1\leq t_2$ in $\mathrealpos$. If $b$ is not smooth, then this equality will in general only be an inequality, as stated in the following lemma.
\begin{lemma}\label{lem:isoperimetric}
  Let $b$ be the Busemann function associated to the two rays $\gamma_1,\,\gamma_2$. Then we have
\begin{equation}\label{eqn:isoperimetric}
  h(t_2)-h(t_1)\geq -\int_{t_1}^{t_2}\omega(t)\,dt
\end{equation}
 for all $t_1\leq t_2$ in $\mathrealpos$.
\end{lemma}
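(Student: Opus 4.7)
The plan is to reduce \eqref{eqn:isoperimetric} to a form of the classical Bol--Fiala inequality for outward equidistant curves, as proved in \cite{BZG} for simply connected regions and extended in the appendix of this paper to the annular setting required here. Fix $0 \le t_1 < t_2$ and consider the compact annular region $A := H_{t_2} \setminus \mathring{H}_{t_1}$. By Corollary \ref{cor:distancefunction}(ii), the restriction of $-b - t_1$ to $A$ coincides with the distance function from $h_{t_1}$ and is regular in the sense of distance functions; its level sets are precisely $h_{t_1 + s}$ for $s \in [0, t_2 - t_1]$. Thus $A$ is naturally foliated by the outward equidistant curves of $h_{t_1}$, and \eqref{eqn:isoperimetric} becomes an assertion about how the length of an outward equidistant curve compares to the total curvature swept out.

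As a guiding smooth case, suppose $b$ were smooth on $A$. Then each $h_t$ would be a smooth disjoint union of two simple closed curves, and the Gauss--Bonnet theorem applied to the compact annulus $H_t$ (with $\chi(H_t) = 0$) would give $\int_{h_t} \kappa \, d\mhdf^1 = -\omega(t)$, where $\kappa$ denotes the geodesic curvature with respect to the inward normal $\grad b$. The first variation formula for arc length under unit-speed outward evolution then yields $h'(t) = -\omega(t)$, so that \eqref{eqn:isoperimetric} holds with equality after integration.

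In general, $b$ is only Lipschitz on $A$ and the level sets $h_t$ can develop corners at points with multiple corays. Regularity of $b$ (Corollary \ref{cor:distancefunction}(ii)) guarantees that each such corner is convex when viewed from the side of $H_t$, i.e.\ has interior angle strictly less than $\pi$. My strategy is to invoke the Bol--Fiala inequality for parallel curves in the form given in the appendix. Heuristically, at such corners two effects balance against one another: the generalized Gauss--Bonnet identity for $H_t$ acquires positive exterior-angle contributions that decrease the integrated curvature of $\partial H_t$, while equidistant curves $h_{t+s}$ acquire additional length from the rounding of convex corners into circular arcs. The pieces of $h_{t+s}$ that are absorbed by the cut locus of $h_{t_1}$ only shorten the equidistant curve, and a careful bookkeeping (supplied by the Bol--Fiala proof) shows that the net effect is $h(t_2) - h(t_1) + \int_{t_1}^{t_2} \omega(t)\, dt \ge 0$.

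The main obstacle is making this sketch rigorous in the presence of the possibly irregular cut locus of the distance function $-b-t_1$ and the corresponding corners on $h_t$. A natural route is to approximate $h_{t_1}$ by pairs of smooth simple closed curves whose lengths converge to $h(t_1)$, apply the smooth identity $h'(t) = -\omega(t)$ to each approximation, and pass to the limit using the uniform Lipschitz control on equidistant curves established in the proof of Proposition \ref{prp:hcontinuous} together with the continuity of $h$ from Corollary \ref{cor:diffinequality}. Ensuring that the corner and cut-locus contributions land on the favourable side of the inequality in the limit is precisely the technical content of the extension of \cite{BZG} carried out in the appendix.
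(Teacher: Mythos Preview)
Your setup contains a concrete error. You assert that on $A=H_{t_2}\setminus\mathring H_{t_1}$ the function $-b-t_1$ is the distance from $h_{t_1}$, citing Corollary~\ref{cor:distancefunction}(ii). But that corollary says $b(p)=\dist(p,h_t)-t$ for $p\in H_t\setminus H_0$; applied with $t=t_2$ it gives $\dist(\cdot,h_{t_2})=b+t_2$ on $A$, and applied with $t=t_1$ it says nothing about points \emph{outside} $H_{t_1}$. For $p\in A$ one only has $\dist(p,h_{t_1})\ge -b(p)-t_1$ from the $1$-Lipschitz bound, and this inequality is strict in general once $b$ fails to be $C^1$: corays emanate from points of $h_{t_1}$ toward the end, and there is no reason every point of $h_{t_1+s}$ should lie on such a coray. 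In other words, the horocycles $h_t$ for $t\in[t_1,t_2]$ are the \emph{inner} equidistants of $h_{t_2}$, not the \emph{outer} equidistants of $h_{t_1}$. Your whole ``outward evolution from $h_{t_1}$'' picture, and with it the direction in which corner and cut-locus effects enter, is therefore not the one that actually governs the family $(h_t)$.

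The paper proceeds differently. It sets $G=H_{t_2}$ and works with the inner distance $\dist(\cdot,\partial G)=b+t_2$, so that $l_{t_2-t}=h_t$; after a Gauss--Bonnet substitution the claim becomes $l(T)-l(0)\le\int_0^T(-\omega(P_t)-\tau)\,dt$. The approximation is not by smoothing the boundary curves but by approximating the \emph{metric} on $G$ by polyhedral metrics in the sense of \cite[3.1.1]{BZG}. For polyhedra the required differential inequality is exactly \cite[Lemma~3.2.3]{BZG}, including the jump condition $(f^i)'(t-0)\ge (f^i)'(t+0)$ at the finitely many singular radii. The appendix then passes to the limit using weak convergence of the curvature and area measures and the continuity of $h$ from Corollary~\ref{cor:diffinequality}. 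If you want to salvage your sketch, first switch to inner equidistants of $h_{t_2}$ so that the level sets really are the $h_t$, and then either follow the polyhedral route or argue carefully that your smooth-boundary approximation controls the same limit quantities; the latter is not obviously easier.
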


This phenomenon can already be observed in the Euclidian plane if one looks at the inner equidistants of a polygon. The effect of an inner angle $\alpha\in(0,\pi)$ on the derivative of the lengths of the inner equidistants is $2 \tan \frac{\pi-\alpha}{2}$, while it adds $\pi-\alpha$ to the geodesic curvature.

In our proof of Theorem \ref{thm:premain}, inequality \eqref{eqn:isoperimetric} is crucial to relate the curvature term $\omega(t)$ in the fundamental inequality \eqref{eqn:fundamental}, to the derivative $h'(t)$ of the lengths of horocycles. Inequality \eqref{eqn:isoperimetric} has a long history. It has been used in the investigation of isoperimetric inequalities on surfaces, see \cite{GBI}, \cite{FFL},\cite{PLL}.

Inequality \eqref{eqn:isoperimetric} can be proved by approximation with polyhedral metrics, see the book \cite{AZI} by A.~D. Aleksandrov and V.~A. Zalgaller. Yu.~D. Burago and V.~A. Zalgaller summarize this method in \cite[\S\S{} 2 - 3]{BZG} in order to use it in a proof of the isoperimetric inequality for surfaces by the method of inner equidistants. Up to a final limit argument our Lemma \ref{lem:isoperimetric} follows from Lemma \cite[3.2.3]{BZG}. This limit argument is provided in the appendix.

\section{Proof of Theorem 3}

In this section we prove

\setcounter{theorem}{2}
\begin{theorem}\label{thm:premain}
    Let $g$ be a complete Riemannian metric without conjugate points on the cylinder $C=S^1\times\mathreal$. Assume there exist two rays $\gamma_1,\, \gamma_2:\mathrealpos\to C$ converging to the different ends of $C$ such that, for $i\in\{1,2\}$, the $1$-dimensional Hausdorff measures of the corresponding horocycles $h^{\gamma_i}_t$satisfy
\[\lim_{t\to\infty}\frac{1}{t}\mhdf^1(h^{\gamma_i}_t)=0.\]
Then $g$ is flat.
\end{theorem}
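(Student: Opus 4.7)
The plan is to run Hopf's method on the exhaustion $H_T=b^{-1}([-T,0])$ provided by Corollary \ref{cor:distancefunction}, and to show that the ``energy''
\[F(T)\defeq\int_{\sigma^{-1}(H_T)}U^2\,d\mlle\]
vanishes identically. Once $F\equiv 0$, the invariance property \eqref{eqn:invariance} gives $u_v(s)=U(\geodflow^s v)=0$ for $\mlle$-a.e.\ $v$ and a.e.\ $s$; continuity of $u_v$ in $s$ then forces $u_v\equiv 0$ along almost every geodesic, and the Riccati equation \eqref{eqn:riccati} yields $K\equiv 0$ by continuity of $K$, i.e.\ $g$ is flat.

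First I would integrate the fundamental inequality \eqref{eqn:fundamental} on $[0,T]$. Lemma \ref{lem:isoperimetric} gives $-\int_0^T\omega\,ds\leq h(T)-h(0)$, and Cauchy--Schwarz yields $\int_0^T\sqrt{F'\,h}\,ds\leq\sqrt{F(T)\,\phi(T)}$, where I set $\phi(T)\defeq\int_0^T h(s)\,ds$ and $G(T)\defeq\int_0^T F(s)\,ds$. The combination is
\[G(T)\;\leq\;2\pi\, h(T)+2\sqrt{2\pi\,G'(T)\,\phi(T)}.\]
Suppose for contradiction that $F\not\equiv 0$. Since $F\geq 0$ is nondecreasing (Corollary \ref{cor:diffinequality}), there is $T_*$ with $\alpha\defeq F(T_*)>0$, so $G(T)\geq\alpha(T-T_*)$ and $\liminf_{T\to\infty}G(T)/T\geq\alpha$. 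The hypothesis $\mhdf^1(h_t^{\gamma_i})/t\to 0$ gives $h(T)=o(T)$ and $\phi(T)=o(T^2)$. For $T$ large enough that $G(T)\geq 4\pi\,h(T)$, the inequality above simplifies to the differential inequality
\[G'(T)\;\geq\;\frac{G(T)^2}{32\pi\,\phi(T)}.\]

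Separating variables and integrating from $T_0$ to $\infty$ (using $G(T)\to\infty$) yields $\int_{T_0}^\infty ds/\phi(s)\leq 32\pi/G(T_0)$. The step I expect to be most delicate is that $\phi=o(T^2)$ alone does not force $\int^\infty ds/\phi$ to diverge, so a single-shot integration is not enough; the resolution is to make the estimate $\epsilon$-dependent. For each $\epsilon>0$ there exists $T(\epsilon)$ with $\phi(s)\leq\epsilon\,s^2$ for all $s\geq T(\epsilon)$, hence $\int_{T_0}^\infty ds/\phi(s)\geq 1/(\epsilon\,T_0)$ whenever $T_0\geq T(\epsilon)$. Combined with the previous estimate this gives $G(T_0)\leq 32\pi\epsilon\,T_0$ for all such $T_0$, so $\limsup_{T\to\infty}G(T)/T\leq 32\pi\epsilon$. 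Letting $\epsilon\downarrow 0$ contradicts $\liminf_{T\to\infty} G(T)/T\geq\alpha>0$. Therefore $F\equiv 0$, and the argument concludes via the Riccati equation as described above. Aside from this $\epsilon$-tradeoff, the only routine checks are the absolute continuity statements for $G$, $\omega$, $\phi$ already furnished by Corollary \ref{cor:diffinequality}, and the passage from $U=0$ $\mlle$-a.e.\ on $T^1C$ to $K\equiv 0$ pointwise, which uses Fubini against the $\geodflow$-invariant measure $\mlle$ together with continuity of Riccati solutions and of $K$.
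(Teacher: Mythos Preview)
Your proof is correct and reaches the same contradiction as the paper, but by a genuinely different algebraic route. The paper divides the fundamental inequality \eqref{eqn:fundamental} by $F$ \emph{before} integrating, which lets it exploit $F'/F^2=(-1/F)'$ to obtain the clean bound $\int_0^t F'/F^2\le 1/F(0)$; the curvature term is then handled via an integration by parts on $\int_0^t\omega/F$, bounding $\Omega(s)=\int_0^s\omega$ below by $-\max_{[0,t]}h$ using Lemma \ref{lem:isoperimetric}. The upshot is a single explicit inequality
\[\Bigl(\tfrac{8\pi}{F(0)}\Bigr)^{1/2}v(t)^{1/2}\;\ge\;t-\tfrac{2\pi}{F(0)}\max_{[0,t]}h,\]
which contradicts $h=o(t)$ and $v=o(t^2)$ immediately. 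Your approach instead integrates first, producing a Riccati-type differential inequality $G'\ge G^2/(32\pi\phi)$ for the second antiderivative $G=\int F$; separating variables gives $\int_{T_0}^\infty ds/\phi(s)\le 32\pi/G(T_0)$, and you then need the $\epsilon$-trick because $\phi=o(T^2)$ alone does not force this integral to diverge. The paper's route is shorter and avoids that extra step; yours has the minor advantage of never dividing by $F$ (so no need to shift the rays to arrange $F(0)>0$), and it keeps the role of the coarea quantity $\phi=v$ transparent throughout.
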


We intend to use the Busemann function associated to the two rays $\gamma_1,\,\gamma_2$, cf.\ Section 2. For this we need to know that condition \eqref{eqn:liminfi} is satisfied, i.e.\ that $\liminf\frac{1}{t}l(\gamma_i(t))<1$. The following two lemmas prove that $\lim\frac{1}{t}\mhdf^1(h^{\gamma_i}_t)=0$ even implies $\lim\frac{1}{t}l(\gamma_i(t))=0$.

\begin{lemma}\label{lem:areaformula}
  Let $E$ be an unbounded, connected, open subset of $C$. Assume that there exists an unbounded component $U$ of $C\setminus\closure{E}$ and $p\in \closure{U}\cap\closure{E}$. Then we have
\[\mhdf^1(\partial E)\geq l(p).\]
\end{lemma}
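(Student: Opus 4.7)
The inequality is trivial if $\mhdf^1(\partial E)=\infty$, so assume $\mhdf^1(\partial E)<\infty$. Since $E$ and $U$ are open and disjoint, the hypothesis $p\in\closure{E}\cap\closure{U}$ forces $p\in\partial E\cap\partial U$; and since $U$ is a connected component of the open set $C\setminus\closure{E}$, one has $\partial U\subseteq\partial E$. It therefore suffices to exhibit a rectifiable loop $\beta$ based at $p$ with $\mathrm{image}(\beta)\subseteq\partial U$ and $\mathrm{length}(\beta)\leq\mhdf^1(\partial U)$ which represents a non-trivial element of $\pi_1(C)$. By the minimality of $l(p)$ among lengths of noncontractible loops at $p$, this will give $l(p)\leq\mathrm{length}(\beta)\leq\mhdf^1(\partial E)$.

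Pass to the universal Riemannian cover $\pi\colon\tilde C=\mathreal^2\to C$, fix a generator $T$ of the deck group, and choose a lift $\tilde p\in\pi^{-1}(p)$. Let $\tilde U_0$ denote the connected component of $\pi^{-1}(U)$ whose closure contains $\tilde p$. The first key step is a topological dichotomy made possible by $\mhdf^1(\partial U)<\infty$: I claim that $U$ must carry a loop that is noncontractible in $C$. Indeed, if it did not, the stabilizer of $\tilde U_0$ in $\langle T\rangle$ would be trivial, so the translates $T^n\tilde U_0$ would be pairwise disjoint open sets in $\mathreal^2$, each homeomorphic via $\pi$ to the unbounded, connected open set $U$. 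Disjointness of neighboring translates in the horizontal direction forces each $T^n\tilde U_0$ to be bounded in the $x$-direction, so its unboundedness must be realized through the $y$-direction, and its boundary $\partial\tilde U_0$ then contains arcs extending to infinity. These project to arcs in $\partial U$ of infinite one-dimensional Hausdorff measure, contradicting $\mhdf^1(\partial U)<\infty$. Hence $\tilde U_0$ is invariant under $T^k$ for some smallest $k\geq 1$, and in particular $T^k\tilde p\in\closure{\tilde U_0}$.

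The second step is analytic. Since $\mhdf^1(\partial U)<\infty$, the closed set $\partial U$ is $1$-rectifiable by Federer's structure theorem. Upon lifting, $\partial\tilde U_0$ is locally $\mhdf^1$-finite and $T^k$-invariant, with total $\mhdf^1$-measure within any $T^k$-fundamental domain equal to $\mhdf^1(\partial U)$. Using that the two points $\tilde p$ and $T^k\tilde p$ of $\partial\tilde U_0$ are both accessible from the connected planar open set $\tilde U_0$, I extract a rectifiable Jordan arc $\tilde\beta\subseteq\partial\tilde U_0$ from $\tilde p$ to $T^k\tilde p$ contained in a single $T^k$-fundamental domain, whose length is at most $\mhdf^1(\partial U)$. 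Projecting, $\beta:=\pi\circ\tilde\beta$ is a loop based at $p$ with image in $\partial U\subseteq\partial E$, representing the nontrivial class $T^k\in\pi_1(C)$, hence noncontractible, of length $\mathrm{length}(\beta)=\mathrm{length}(\tilde\beta)\leq\mhdf^1(\partial U)\leq\mhdf^1(\partial E)$. This completes the proof.

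The main obstacle is the analytic extraction of the rectifiable Jordan arc $\tilde\beta$ within $\partial\tilde U_0$ joining two prescribed accessible boundary points with length controlled by a single $T^k$-fundamental domain of $\partial\tilde U_0$. This is not purely formal: one needs either Carath\'eodory prime-end theory applied to the simply-connected parts of $\tilde U_0$, or the structure theory for planar sets of finite perimeter, to follow $\partial\tilde U_0$ continuously from $\tilde p$ to $T^k\tilde p$ without retracing any portion of the boundary and without exceeding a single period.
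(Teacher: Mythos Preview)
Your approach has a genuine gap, and it is the one you yourself flag at the end. The extraction of a rectifiable Jordan arc $\tilde\beta\subseteq\partial\tilde U_0$ from $\tilde p$ to $T^k\tilde p$ with length at most $\mhdf^1(\partial U)$ is not just ``not purely formal''---it can fail outright. Even when $\mhdf^1(\partial U)<\infty$, the boundary $\partial\tilde U_0$ need not be arcwise connected, so there may be no arc in $\partial\tilde U_0$ joining the two prescribed points at all; neither prime-end theory nor the structure theorem for rectifiable sets furnishes such an arc. And even if some arc existed, nothing prevents it from winding through many $T^k$-periods of $\partial\tilde U_0$, so the length bound by a single fundamental domain is unjustified. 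Your first step is also not solid: the talk of ``$x$-direction'' and ``$y$-direction'' presupposes coordinates that a general Riemannian universal cover does not carry, and the claim that unbounded boundary pieces of $\tilde U_0$ project to infinite-$\mhdf^1$ subsets of $\partial U$ ignores that $\pi|_{\partial\tilde U_0}$ need not be injective (distinct translates of $\tilde U_0$ may share boundary points).

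The paper sidesteps all of this with a short coarea argument that never tries to locate a loop inside $\partial E$. Since $(C,g)$ has no conjugate points, the injectivity radius at $p$ is $l(p)/2$, so for every $r\in(0,l(p)/2)$ the geodesic sphere $S_p(r)$ is an embedded circle. Both $E$ and $U$ are connected, unbounded, and have $p$ in their closures, so each meets $S_p(r)$; as $E$ and $U$ are disjoint open sets, the circle $S_p(r)$ must contain at least two points of $\partial E$. Applying the Eilenberg inequality \cite[2.10.11]{GMT} to the $1$-Lipschitz function $\dist(\cdot,p)$ on $\partial E$ gives
\[
\mhdf^1(\partial E)\;\geq\;\int_0^{l(p)/2}\#\bigl(\partial E\cap S_p(r)\bigr)\,dr\;\geq\;\int_0^{l(p)/2}2\,dr\;=\;l(p).
\]
This is both shorter and avoids the delicate boundary-connectivity issues your strategy runs into.
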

\begin{proof}
  Since $U$ and $E$ are connected and unbounded and $p\in\closure{U}\cap\closure{E}$, we conclude that, for every $r>0$, both $U$ and $E$ intersect the geodesic sphere $S_p(r)=\{q\in C : \dist (p,q)=r\}$. For $r<l(p)/2$, the geodesic sphere $S_p(r)$ is diffeomorphic to a circle. Hence, for $r\in(0,l(p)/2)$, $\partial E$ has at least two points in common with $S_p(r)$. Since the distance function from $p$ is Lipschitz with constant one, we can use \cite[2.10.11]{GMT} to conclude that
\[\mhdf^1(\partial E)\geq 2 (l(p)/2)=l(p).\]
\end{proof}

\begin{lemma}
  Let $\gamma:\mathrealpos\to C$ be a ray with Busemann function $b_\gamma$ and horocycles $h^\gamma_t\defeq b_\gamma^{-1}(-t)$. Then we have
\[l(\gamma(t))\leq\mhdf^1(h^\gamma_t)\]
for every $t>l(\gamma(0))/2$.
\end{lemma}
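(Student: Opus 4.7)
The plan is to apply Lemma~\ref{lem:areaformula} with $p=\gamma(t)$ and a carefully chosen unbounded connected open set $E$ whose boundary lies in $h^\gamma_t$, producing the required unbounded complementary component with the help of the short noncontractible loop at $\gamma(0)$.

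First I may assume $\mhdf^1(h^\gamma_t)<\infty$, else the inequality is trivial. I let $E$ be the connected component of the open set $b_\gamma^{-1}((-\infty,-t))$ containing the tail $\gamma((t,\infty))$; since $b_\gamma(\gamma(s))=-s$ this tail does lie in the sublevel set, so $E$ is open, connected, and unbounded. By continuity of $b_\gamma$, any point of $\partial E$ satisfies $b_\gamma=-t$, so $\partial E\subseteq h^\gamma_t$; moreover $\gamma(t)\in\overline{E}$ since $\gamma(s)\to\gamma(t)$ as $s\downarrow t$.

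Next, let $\delta$ be a shortest noncontractible geodesic loop based at $\gamma(0)$; by Remark~\ref{rmk:loops} it is simple and generates $\pi_1(C)$. Because $b_\gamma$ is $1$-Lipschitz with $b_\gamma(\gamma(0))=0$ and every point on $\delta$ is at distance at most $l(\gamma(0))/2$ from $\gamma(0)$, we get $|b_\gamma|\leq l(\gamma(0))/2<t$ on $\delta$. Hence $\delta\subseteq b_\gamma^{-1}((-t,\infty))\subseteq C\setminus\overline{E}$, and for the same reason $\gamma([0,t))\subseteq C\setminus\overline{E}$. Defining $U$ as the connected component of $C\setminus\overline{E}$ containing $\gamma(0)$, the connected set $\delta\cup\gamma([0,t))$ is contained in $U$, and in particular $\gamma(t)\in\overline{U}$.

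The crucial remaining point is to verify that $U$ is unbounded. The simple closed curve $\delta$, being a generator of $\pi_1(C)$, separates the cylinder $C$ into two connected components, each containing one of its two ends and thus each unbounded. Since $\overline{E}$ is connected and disjoint from $\delta$, it lies in only one of these two components; the other is therefore contained in $C\setminus\overline{E}$ and, together with $\delta$, forms a connected subset of $C\setminus\overline{E}$ meeting $\gamma(0)$. This subset is contained in $U$, so $U$ contains an entire end of $C$ and is unbounded.

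With all the hypotheses of Lemma~\ref{lem:areaformula} verified for $E$, $U$, and $p=\gamma(t)$, I obtain
\[
  \mhdf^1(h^\gamma_t)\geq\mhdf^1(\partial E)\geq l(\gamma(t)).
\]
The main subtlety is the topological verification that $U$ is unbounded; it is exactly here that the hypothesis $t>l(\gamma(0))/2$ is used, since it is what forces $\delta$ to lie outside $\overline{E}$ and hence provide the separating loop needed to capture an entire end of $C$ inside $U$.
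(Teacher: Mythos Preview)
Your proof is correct and follows essentially the same approach as the paper: apply Lemma~\ref{lem:areaformula} with the horoball as $E$ and $p=\gamma(t)$, using the short loop $\delta$ at $\gamma(0)$ to exhibit an unbounded component of the complement. The only (minor) difference is that the paper takes $E$ to be the full sublevel set $b_\gamma^{-1}((-\infty,-t))$ and explicitly constructs a curve from $\gamma(t)$ to infinity in $C\setminus\overline{E}$, whereas you take $E$ to be the connected component containing $\gamma((t,\infty))$ and argue directly that the component $U$ is unbounded; your choice is arguably cleaner since it sidesteps the need to verify that the full sublevel set is connected.
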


\begin{proof}
  For fixed $t>l(\gamma(0))$ we consider the horoball
\[E^\gamma_t=b^{-1}_\gamma((-\infty,-t)).\]
Then $h^\gamma_t=\partial E^\gamma_t$. If $\gamma(t)$ can be joined to infinity by a curve $\widetilde{\gamma}:\mathrealpos\to C$ with $\widetilde{\gamma}(0)=\gamma(t)$ and $\widetilde{\gamma}(s)\notin \closure{E}^\gamma_t$ for every $s>0$, then we can apply Lemma \ref{lem:areaformula} to $E=E^\gamma_t$ and $p=\gamma(t)$, and obtain
\[\mhdf^1(h^\gamma_t)=\mhdf^1(\partial E^\gamma_t)\geq l(\gamma(t)),\]
as claimed. It remains to construct such a curve $\widetilde{\gamma}$. First note that the segment $\gamma|[0,t]$ of the ray $\gamma$ satisfies
\[b_\gamma(\gamma(s))=-s>-t,\text{ and hence }\gamma(s)\notin\closure{E}^\gamma_t,\]
for every $s\in[0,t)$. So, it suffices to find a curve in $C\setminus\closure{E}^\gamma_t$ joining $\gamma(0)$ to infinity. Let $\delta_0$ denote the noncontractible geodesic loop at $\gamma(0)$ of length $l(\gamma(0))$. Since $t>l(\gamma(0))/2$, $b_\gamma(\gamma(0))=0$ and $b_\gamma$ is $1$-Lipschitz, we see that $\delta_0\subseteq C\setminus\closure{E}^\gamma_t$. The set $C\setminus\delta_0$ has two connected components, both unbounded in $C$, and the connected set $\closure{E}^\gamma_t$ is contained in one of them. Now $\gamma(0)\in\delta_0$ can be joined to infinity by a curve in the other component, so in particular in $C\setminus\closure{E}^\gamma_t$.
\end{proof}

The preceding lemma shows that our assumption $\lim\frac{1}{t}\mhdf^1(h^{\gamma_i}_t)=0$ im\-plies $\lim\frac{1}{t}l(\gamma_i(t))=0$. In particular, condition \eqref{eqn:liminfi} is satisfied. Moreover, replacing $\gamma_1$ by a subray of $\gamma_1$, we can assume that \eqref{eqn:disjoint} holds. By Corollary \ref{cor:distancefunction} there exists the Busemann function $b$ associated to the rays $\gamma_1,\,\gamma_2$, as defined in \eqref{eqn:defb}. In the following proof we use the functions $h(t)$, $v(t)$, $\omega(t)$ and $F(t)$ defined for this function $b$, see Definition \ref{def:bfunctions}.

\begin{proof}[Proof of Theorem \ref{thm:premain}]
Suppose $F$ does not vanish identically, so that $F(t_0)>0$ for some $t_0\in[0,\infty)$. We may assume $t_0=0$ by replacing $\gamma_1,\,\gamma_2$ by appropriate subrays, and hence $F>0$ by monotonicity of $F$. From the fundamental inequality \eqref{eqn:fundamental} we infer
\[
  1+\frac{2\pi\omega}{F} \leq \sqrt{8\pi} \left ( \frac{F'}{F^2} h \right )^{1/2},
\]
and by integration, using the Cauchy-Schwarz-inequality, for every $t\in\mathrealpos$
\[
  \int_0^t \left( 1+\frac{2\pi\omega}{F}\right) \dmlge \leq \sqrt{8\pi} \left( \int_0^t \frac{F'}{F^2} \dmlge \right)^{1/2} \left( \int_0^t h \dmlge\right)^{1/2}.
\]
Observing $\frac{F'}{F^2}=\left( -\frac{1}{F}\right )'$ and using \cite[VII.4.14 (Hauptsatz)]{JEM} for the absolutely continuous function $\frac{1}{F}$, and the first equality in Corollary \ref{cor:diffinequality}, we obtain
\begin{equation}\label{eqn:leftside}
  \int_0^t \left( 1+\frac{2\pi\omega}{F}\right ) \dmlge \leq \left(\frac{8\pi}{F(0)}\right)^{1/2}v(t)^{1/2}.
\end{equation}

Next, we wish to show that we have
\begin{equation}\label{eqn:rightside}
  \int_0^t\frac{\omega}{F}\dmlge \geq -\frac{1}{F(0)} \max_{[0,t]}h
\end{equation}
for every $t\in[0,\infty)$. Let $\Omega(s)\defeq\int_0^s \omega \,d\mlge$ for every $0\leq s\leq t$. Integration by parts \cite[VII.4.16]{JEM} for the absolutely continuous functions $\Omega,\frac{1}{F}:[0,t]\to\mathreal$ implies
\begin{equation}\label{eqn:partint}
\int_0^t\frac{\omega}{F}\, d\mlge =\frac{\Omega(t)}{F(t)}+\int_0^t\Omega\left(-\frac{1}{F}\right)' d\mlge.
\end{equation}
By Lemma \ref{lem:isoperimetric} for $t_1=0$, $t_2=s$, we have $\Omega(s)\geq -h(s)\geq -\max_{[0,t]}h$ for every $0\leq s\leq t$. Moreover, by monotonicity of $F$ we have $\left(-\frac{1}{F}\right)'\geq 0$ almost everywhere. Estimating the right hand side of \eqref{eqn:partint} using these properties and \cite[VII.4.14 (Hauptsatz)]{JEM} proves \eqref{eqn:rightside}.

Combining inequalities \eqref{eqn:leftside} and \eqref{eqn:rightside} we conclude that
\begin{equation}\label{eqn:contradvh}
  \left( \frac{8\pi}{F(0)} \right)^{1/2} v(t)^{1/2}\geq t-\frac{2\pi}{F(0)} \max_{[0,t]}h
\end{equation}
for every $t\in\mathrealpos$.

But, by \eqref{eqn:hsum} and our assumption, we have $\lim_{t\to\infty} h(t)/t=0$, and thus $\lim_{t\to\infty} v(t)/t^2=0$. Accordingly, inequality \eqref{eqn:contradvh} cannot hold. Hence $F=0$. The remainder of the proof is the same as in E. Hopf's original argument: By Definition \ref{def:bfunctions}, $F=0$ implies that $U=0$ $\mlle$-almost everywhere, and then \eqref{eqn:riccati} and \eqref{eqn:invariance} imply that $K=0$, i.e.\ $g$ is flat.
\end{proof}

\section{An application of Santalo's formula}
If we want to deduce Theorem \ref{thm:main} from Theorem \ref{thm:premain} we have to convert information on the lengths of shortest noncontractible loops into information on the lengths of horocycles. This is achieved via the area estimate in Proposition \ref{prp:santalo2} below. This estimate is a consequence of Santalo's formula, cf.\ \cite{VBT}.

\begin{proposition}\label{prp:santalo1}
  Let $M$ be an $m$-dimensional Riemannian manifold, $m\geq 2$, and let $A\subseteq M$ be a subset that is the closure of its interior and has strong Lipschitz boundary. Assume that there exists $T>0$ such that the following is true: For every geodesic $\gamma$ with $\gamma(0)\in A$ there exists $t\in (0,T)$ such that $\gamma(t)\notin A$. Then we have
\[\mvol{m}(A)\leq c_m\,\mvol{m-1}(\partial A)\,T,\]
where $c_m=\frac{\alpha_{m-2}}{(m-1)\alpha_{m-1}}$ and $\alpha_m$ is the volume of the $m$-dimensional unit sphere in Euclidian space. If every geodesic $\gamma:[a,b]\to M$ with $\gamma([a,b])\subseteq A$ is minimal, then $\mvol{m}(A)\leq c_m\mvol{m-1}(\partial A)\,\diam (A)$.
\end{proposition}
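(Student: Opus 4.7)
The plan is to apply Santal\'o's formula to the set $A$ with $f\equiv 1$. Let $\tau:T^1A\to[0,\infty]$ denote the first exit time, $\tau(v)\defeq\inf\{t>0:\geodflow^t v\notin T^1A\}$. Let $\nu$ be the inward unit normal along $\partial A$ (defined $\mhdf^{m-1}$-almost everywhere by the strong Lipschitz assumption) and write $\partial_+ T^1A$ for the set of unit vectors $v$ based at $\partial A$ with $\langle v,\nu\rangle>0$. Santal\'o's formula (see \cite{VBT}) states that for every nonnegative Borel function $f$ on $T^1A$,
\[
\int_{T^1 A} f\, d\mlge \;=\; \int_{\partial_+ T^1A}\Bigl(\int_0^{\tau(v)} f(\geodflow^t v)\, dt\Bigr)\langle v,\nu\rangle\, d\mlge(v).
\]

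Specializing to $f\equiv 1$, the left hand side equals $\alpha_{m-1}\mvol{m}(A)$, since the Liouville measure of $T^1A$ is the product of the fiber measure (with total mass $\alpha_{m-1}$) and $\mvol{m}$. On the right hand side, the hypothesis on $T$ guarantees $\tau(v)\leq T$ for every $v\in\partial_+T^1A$, so the integrand is bounded by $T\langle v,\nu\rangle$. Fubini then gives
\[
\alpha_{m-1}\mvol{m}(A)\;\leq\; T\int_{\partial A}\Bigl(\int_{S^{m-1}_{+}(p)}\langle v,\nu\rangle\, d\mhdf^{m-1}(v)\Bigr)d\mvol{m-1}(p),
\]
where $S^{m-1}_+(p)$ denotes the open hemisphere of $T^1M_p$ on which $\langle\cdot,\nu\rangle>0$.

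The inner integral is a universal constant: parametrizing $S^{m-1}_+$ by the angle $\theta\in[0,\pi/2)$ to the axis $\nu$ and the orthogonal direction in $S^{m-2}$ yields
\[
\int_{S^{m-1}_+}\langle v,\nu\rangle\, d\mhdf^{m-1}(v)=\alpha_{m-2}\int_0^{\pi/2}\cos\theta\sin^{m-2}\theta\, d\theta=\frac{\alpha_{m-2}}{m-1}.
\]
Substituting and dividing by $\alpha_{m-1}$ yields the first claim with constant $c_m=\alpha_{m-2}/((m-1)\alpha_{m-1})$. For the second claim, observe that under the minimality assumption any geodesic segment staying in $A$ realizes the distance between its endpoints, so its length is at most $\diam(A)$; hence $\tau(v)\leq\diam(A)$ and one may take $T=\diam(A)$ in the preceding argument.

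The only nontrivial step is verifying the hypotheses under which Santal\'o's formula is applicable; here the standing assumption that $A$ has strong Lipschitz boundary is precisely what is required, so the argument reduces to the elementary spherical integral above. I expect no genuine obstacle beyond citing \cite{VBT} carefully.
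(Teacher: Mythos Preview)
Your proposal is correct and follows essentially the same route as the paper: apply Santal\'o's formula to bound $\mlle(T^1A)=\alpha_{m-1}\mvol{m}(A)$ in terms of a boundary integral, evaluate the hemisphere integral $\int_{S^{m-1}_+}\langle v,\nu\rangle\,d\mhdf^{m-1}=\alpha_{m-2}/(m-1)$, and for the second claim bound the exit time by $\diam(A)$ via minimality. The only cosmetic difference is that the paper phrases Santal\'o as the inequality $\mlle\bigl(\geodflow(\partial\mathfrak{A}^+\times[0,T])\bigr)\le\frac{\alpha_{m-2}}{m-1}\mvol{m-1}(\partial A)\,T$ together with the inclusion $T^1\interior{A}\subseteq\geodflow(\partial\mathfrak{A}^+\times(0,T))$, rather than invoking the equality form with the first exit time $\tau$; and for the diameter statement the paper takes $T>\diam(A)$ and lets $T\downarrow\diam(A)$, whereas you bound $\tau(v)\le\diam(A)$ directly inside the integral---both are fine.
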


\begin{proof}
  Let $\partial\mathfrak{A}^+\subseteq T^1M$ denote the set of all vectors $v\in T^1M$ such that $\sigma(v)\in\partial A$ and the geodesic $\gamma_v$ with initial vector $v$ satisfies $\gamma_v(t)\in \interior{A}$ for all sufficiently small $t>0$.
Let $\geodflow$ denote the geodesic flow of $M$. Then Santalo's formula implies
\begin{equation}\label{eqn:santalo1}
  \mlle(\geodflow(\partial\mathfrak{A}^+\times[0,T]))\leq \frac{\alpha_{m-2}}{m-1}\,\mvol{m-1}(\partial A)\,T,
\end{equation}
where $\mlle$ denotes the Liouville measure on $T^1M$, cf.\ \cite[Section 3]{VBT}. Now we use our assumption on $T$ to show that
\begin{equation}\label{eqn:santalo2}
  T^1\interior{A}\subseteq \geodflow(\partial\mathfrak{A}^+\times(0,T)).
\end{equation}
Namely, if $v\in T^1\interior{A}$ then there exists $t\in(0,T)$ such that $\gamma_v((-t,0))\subseteq\interior{A}$ and $\gamma_v(-t)\in\partial A$. This implies that $w=\dot{\gamma}_v(-t)=\geodflow(v,-t)\in\partial\mathfrak{A}^+$, so that $v=\geodflow (w,t)$ with $w\in\partial\mathfrak{A}^+$ and $t\in(0,T)$.

Finally, by the definition of the Liouville measure $\mlle$, we have
\begin{equation}\label{eqn:santalo3}
  \mlle(T^1\interior{A})=\alpha_{m-1}\mvol{m}(\interior{A}).
\end{equation}
Now our claim follows from \eqref{eqn:santalo1} - \eqref{eqn:santalo3} and the fact that $\mvol{m}(\interior{A})=\mvol{m}(A)$ since $\partial A$ is an $(m-1)$-dimensional strong Lipschitz submanifold. Finally, if every geodesic in $A$ is minimal, then our assumption is satisfied for every $T>\diam (A)$. Hence the second implication in our claim follows from the first one.
\end{proof}
\begin{proposition}\label{prp:santalo2}
  Let $g$ be a complete Riemannian metric without conjugate points on the cylinder $C=S^1\times\mathreal$. Let $D$ be a compact subcylinder of $C$ bounded by two noncontractible, piecewise regular $C^1$-curves $\Gamma_1$, $\Gamma_2$ of lengths $L_1$, $L_2$, respectively. Set $\overline{L}=\frac{1}{2}(L_1+L_2)$. Then
\[\mvol{2}(D)\leq\frac{8}{\pi}\,(\dist (\Gamma_1,\Gamma_2)+\overline{L})\,\overline{L}.\]
\end{proposition}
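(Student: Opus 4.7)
The plan is to lift to the universal cover $\pi\colon\widetilde{C}\to C$ and apply Proposition~\ref{prp:santalo1} to a suitable union of fundamental domains in $\widetilde{C}$. Because $\widetilde{C}$ is a complete, simply connected Riemannian $2$-manifold without conjugate points, $\exp_p$ is a diffeomorphism at every $p$, and hence \emph{every} geodesic in $\widetilde{C}$ is globally minimizing. Let $\tau$ generate the deck group $\mathbb{Z}$, let $\widetilde{\Gamma}_1,\widetilde{\Gamma}_2$ be lifts of $\Gamma_1,\Gamma_2$ bounding a strip $\widetilde{D}\subseteq\widetilde{C}$, choose $p_i\in\Gamma_i$ realizing $\dist(\Gamma_1,\Gamma_2)=:\delta$, and let $\widetilde{\gamma}$ be the lift of a minimizing geodesic $\gamma$ from $p_1$ to $p_2$; then $\widetilde{\gamma}$ is a minimizing geodesic of length $\delta$ from $\widetilde{p}_1\in\widetilde{\Gamma}_1$ to $\widetilde{p}_2\in\widetilde{\Gamma}_2$.

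The first step is to observe that the translates $\tau^k\widetilde{\gamma}$, $k\in\mathbb{Z}$, are pairwise disjoint. Indeed, an intersection $\widetilde{q}\in\widetilde{\gamma}\cap\tau^k\widetilde{\gamma}$ with $k\neq 0$ would place $\widetilde{q}$ and $\tau^{-k}\widetilde{q}$ on $\widetilde{\gamma}$ as two distinct lifts of the same point of the embedded curve $\gamma$; injectivity of $\widetilde{\gamma}$ then forces $\tau^k$ to fix $\widetilde{q}$, contradicting freeness of the deck action. For each integer $N\geq 1$ let $A_N$ be the region bounded by the Jordan curve consisting of the arcs of $\widetilde{\Gamma}_1$ and $\widetilde{\Gamma}_2$ between $\widetilde{p}_i$ and $\tau^N\widetilde{p}_i$ (of lengths $NL_1$ and $NL_2$), joined by $\widetilde{\gamma}$ and $\tau^N\widetilde{\gamma}$. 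Then $A_N$ is the union of $N$ consecutive $\tau$-translates of a fundamental domain of $\widetilde{D}$, so $\mvol{2}(A_N)=N\,\mvol{2}(D)$ and $\mvol{1}(\partial A_N)=N(L_1+L_2)+2\delta=2(N\overline{L}+\delta)$.

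Since every geodesic in $A_N$ is minimal, the second statement of Proposition~\ref{prp:santalo1} applies with $c_2=1/\pi$:
\[ N\,\mvol{2}(D)\;=\;\mvol{2}(A_N)\;\leq\;\tfrac{1}{\pi}\,\mvol{1}(\partial A_N)\,\diam(A_N). \]
The key step is the diameter bound $\diam(A_N)\leq\tfrac{1}{2}\mvol{1}(\partial A_N)=N\overline{L}+\delta$. For any $x,y\in A_N$ I extend the unique minimizing geodesic from $y$ through $x$ beyond $x$ until it leaves the bounded set $A_N$ at a first point $z\in\partial A_N$; minimality in $\widetilde{C}$ gives $\dist(y,z)=\dist(y,x)+\dist(x,z)\geq\dist(x,y)$. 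Running the same argument for $y$ then shows $\dist(x,y)\leq\diam(\partial A_N)$, and on a Jordan curve of length $P$ any two points are joined by an arc of length $\leq P/2$, whence distance $\leq P/2$. Together these give
\[\mvol{2}(D)\;\leq\;\frac{2(N\overline{L}+\delta)^2}{N\pi}.\]

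Finally I would optimize by setting $N=\lceil\delta/\overline{L}\rceil$ and verifying the elementary inequality $(N\overline{L}+\delta)^2\leq 4N(\overline{L}+\delta)\overline{L}$: writing $\delta=(N-t)\overline{L}$ with $t\in[0,1)$, direct expansion yields
\[4N(\overline{L}+\delta)\overline{L}-(N\overline{L}+\delta)^2=(4N-t^2)\overline{L}^2>0,\]
giving $\mvol{2}(D)\leq\tfrac{8}{\pi}(\overline{L}+\delta)\overline{L}$. The main obstacles I expect to need care with are the disjointness of the translates $\tau^k\widetilde{\gamma}$ (needed so that $\partial A_N$ is a Jordan curve) and the diameter-via-half-perimeter bound, which is the place where the no-conjugate-points hypothesis really enters through global minimality of geodesics in $\widetilde{C}$.
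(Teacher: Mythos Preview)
Your proposal is correct and follows essentially the same route as the paper's: lift $D$ to the universal cover, take $N$ consecutive fundamental domains of the strip $\widetilde D$ cut by translates of a minimizing geodesic from $\Gamma_1$ to $\Gamma_2$, apply Proposition~\ref{prp:santalo1} with $\diam(A_N)$ bounded by half the perimeter $N\overline{L}+\delta$, and then optimize over $N\approx\delta/\overline{L}$. The paper is in fact terser on precisely the two points you flag as delicate---disjointness of the $\tau^k\widetilde\gamma$ and the half-perimeter diameter bound---so your added justification there is appropriate; for clarity you might phrase the latter by extending the geodesic through $x$ and $y$ in \emph{both} directions at once to boundary points $w,z$, giving $\dist(x,y)\le\dist(w,z)\le\tfrac12\mathrm{length}(\partial A_N)$ directly.
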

\begin{proof}
  We let $\Gamma$ denote a shortest geodesic segment joining $\Gamma_1$ and $\Gamma_2$, so that $L(\Gamma)=\dist(\Gamma_1,\Gamma_2)$. We consider a connected component $\widetilde{D}$ of the preimage of $D$ in the universal Riemannian covering $\widetilde{C}\simeq\mathreal^2$ of $C$. The two boundary components of $\widetilde{D}$ are lifts $\widetilde{\Gamma}_1$ of $\Gamma_1$ and $\widetilde{\Gamma}_2$ of $\Gamma_2$. We let $\widetilde{\Gamma}^n\subseteq\widetilde{D}$ denote the lifts of $\Gamma$ to $\widetilde{D}$, numbered in their natural order. For every $n\in\mathnatural$ we consider the compact subdomain $\widetilde{D}^n$ of $\widetilde{D}$ between $\widetilde{\Gamma}^0$ and $\widetilde{\Gamma}^n$. Note that
\[\mvol{2}(\widetilde{D}^n)=n\,\mvol{2}(D).\]
We will now apply Proposition \ref{prp:santalo1} to the set $\widetilde{D}^n$. Note that $\widetilde{D}^n$ is bounded by $\widetilde{\Gamma}^0$ and $\widetilde{\Gamma}^n$, both of length $L(\Gamma)$, and by two segments on $\widetilde{\Gamma}_1$ and $\widetilde{\Gamma}_2$ of lengths $nL_1$ and $nL_2$, respectively. So the total length of $\partial\widetilde{D}^n$ is $2(L(\Gamma)+n\overline{L})$. Since every geodesic in $\widetilde{C}$ is minimal, we conclude that
\[\diam (\widetilde{D}^n)\leq L(\Gamma)+n\overline{L}.\]
Now Proposition \ref{prp:santalo1} implies
\[\mvol{2}(\widetilde{D}^n)\leq\frac{2}{\pi}(L(\Gamma)+n\overline{L})^2.\]
We choose $n\in\mathnatural$ such that
\[(n-1)\overline{L}\leq L(\Gamma)<n\overline{L}.\]
Then the preceding inequalities imply
\[\mvol{2}(\widetilde{D}^n)\leq \frac{8n^2}{\pi}\overline{L}^2.\]
Since $\mvol{2}(D)=\frac{1}{n}\mvol{2}(\widetilde{D}^n)$ we conclude that
\[\mvol{2}(D)\leq \frac{8}{\pi}n\overline{L}^2\leq \frac{8}{\pi} (L(\Gamma)+\overline{L})\overline{L}.\]
\end{proof}

\begin{remark}
(1) The inequalities in Proposition \ref{prp:santalo1} are equalities if $A$ is a hemisphere. The inequality in Proposition \ref{prp:santalo2} is always strict.
\newline\noindent (2) Besicovitch's Lemma \cite{ABO} implies the following reverse inequality which is true without the assumption that there do not exist conjugate points. If $D= S^1\times[0,1]$ is a compact Riemannian cylinder with boundary components $\Gamma_1$ and $\Gamma_2$, and if $L$ denotes the minimal length of a noncontractible closed curve in $D$, then
\[\mvol{2}(D)\geq\dist (\Gamma_1,\Gamma_2)L.\]
This inequality is an equality if $D$ is flat with geodesic boundary.
\end{remark}

\section{Proof of Theorem 4}

In this section we prove

\setcounter{theorem}{3}
\begin{theorem}\label{thm:main}
Let $g$ be a complete Riemannian metric without conjugate points on the cylinder $C=S^1\times\mathreal$. Assume that for some constants $c,\,\kappa,\,\lambda$ in $\mathrealpos$ with $2\lambda+\frac{\kappa}{2}<1$, and for all $p\in C$ we have that
\renewcommand{\labelenumi}{(\roman{enumi}')}
\begin{enumerate}
\item the Gaussian curvature $K$ of $g$ satisfies $K(p)\geq -c\,(\dist(p,p_0)+1)^\kappa$, and
\item the length $l(p)$ of the shortest noncontractible loop at $p$ satisfies $l(p)\leq c\,(\dist(p,p_0)+1)^\lambda$.
\end{enumerate}
Then $g$ is flat.
\end{theorem}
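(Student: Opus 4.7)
The plan is to follow the scheme of the proof of Theorem \ref{thm:premain}, using Proposition \ref{prp:santalo2} together with (ii') to obtain a polynomial volume bound, and using (i') together with a Hopf--Riccati growth estimate on $|U|$ to control the curvature integrals that in Theorem \ref{thm:premain} were handled via the horocycle length assumption. Hypothesis (ii') gives $l(\gamma_i(t))=O(t^\lambda)$ for any two rays $\gamma_1,\gamma_2$ converging to the two ends of $C$, so \eqref{eqn:liminfi} holds; after replacing $\gamma_1$ by a subray to secure \eqref{eqn:disjoint}, we form the Busemann function $b$, the exhaustion $H_t$ of Corollary \ref{cor:distancefunction}, and the functions $h,v,\omega,F$ of Definition \ref{def:bfunctions}.

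The first step is to collect polynomial bounds on these functions. Applying Proposition \ref{prp:santalo2} to the compact subcylinder $D_t$ bounded by the shortest noncontractible geodesic loops $\delta_t^{(i)}$ at $\gamma_i(t)$ gives $\mvol{2}(D_t)=O(t^{1+\lambda})$ because $\bar L_t\leq c(t+1)^\lambda$ and $\dist(\delta_t^{(1)},\delta_t^{(2)})\leq 2t+O(t^\lambda)$. Part (d) of the proof of Proposition \ref{prp:regular} shows $H_{t/2}\subseteq D_t$ for large $t$, so $v(t)=O(t^{1+\lambda})$. Lifting $D_t$ to the universal cover one sees $\mathrm{diam}(D_t)=O(t)$, so $\dist(p,p_0)=O(t)$ for $p\in H_t$; (i') then gives $-\omega(t)=O(t^{1+\lambda+\kappa})$ and $|\Omega(t)|:=|\int_0^t\omega|=O(t^{2+\lambda+\kappa})$. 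Finally, the Hopf--Riccati argument in the proof of Lemma \ref{lem:keylemma}, applied with the curvature growth from (i'), gives the pointwise estimate $|U(v)|=O((\dist(\sigma(v),p_0)+1)^{\kappa/2})$, whence $F(t)=O(t^{1+\lambda+\kappa})$.

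Assuming for contradiction $F\not\equiv 0$, monotonicity of $F$ and passage to subrays allow us to assume $F(0)>0$ and $F>0$. Exactly as in the proof of Theorem \ref{thm:premain}, the fundamental inequality \eqref{eqn:fundamental} with Cauchy--Schwarz yields
\[t+2\pi\int_0^t\frac{\omega}{F}\,d\mlge\;\leq\;\Big(\frac{8\pi}{F(0)}\Big)^{1/2}v(t)^{1/2}.\]
Since the Theorem \ref{thm:premain}-bound $\int_0^t\omega/F\geq -\max h/F(0)$ is no longer available, we integrate by parts, $\int_0^t\omega/F=\Omega(t)/F(t)+\int_0^t\Omega\,F'/F^2\,d\mlge$, and use the bounds $|\Omega(t)|=O(t^{2+\lambda+\kappa})$, $F(t)=O(t^{1+\lambda+\kappa})$ with the monotonicity of $F$ to arrive at $\int_0^t\omega/F\geq -\alpha t+o(t)$ with a constant $\alpha>0$ controlled by the geometric constants. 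Substituting $v(t)=O(t^{1+\lambda})$, we obtain an inequality of the form $t(1-2\pi\alpha)+o(t)\leq C\,t^{(1+\lambda)/2}$, which is violated for large $t$ precisely when $2\lambda+\kappa/2<1$. Hence $F\equiv 0$, and Hopf's argument at the end of the proof of Theorem \ref{thm:premain} concludes that $g$ is flat.

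The main obstacle is this final balancing step: using only $F\geq F(0)$ and $|\Omega|=O(t^{2+\lambda+\kappa})$ gives $\int_0^t\omega/F\geq -Ct^{2+\lambda+\kappa}/F(0)$, a correction that dwarfs the linear term for $\lambda+\kappa>-1$. What makes the argument work is the Riccati upper bound $F(t)=O(t^{1+\lambda+\kappa})$, which turns $\Omega(t)/F(t)$ into an $O(t)$ quantity with an explicit constant, and propagates through integration by parts to bound the remaining $\int\Omega F'/F^2$; it is the interplay of the three polynomial exponents $(1+\lambda)/2$ from Cauchy--Schwarz, $1+\lambda+\kappa$ from $F$, and $2+\lambda+\kappa$ from $\Omega$ that crystallizes as the sharp hypothesis $2\lambda+\kappa/2<1$.
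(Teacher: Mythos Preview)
Your final balancing step has a genuine gap. You write that the upper bound $F(t)=O(t^{1+\lambda+\kappa})$ ``turns $\Omega(t)/F(t)$ into an $O(t)$ quantity'', but an \emph{upper} bound on $F$ gives a \emph{lower} bound on $1/F$, hence makes $|\Omega|/F$ larger, not smaller. To get $|\Omega(t)|/F(t)=O(t)$ from $|\Omega(t)|=O(t^{2+\lambda+\kappa})$ you would need $F(t)\gtrsim t^{1+\lambda+\kappa}$, and the only lower bound you have is $F\geq F(0)$; you explicitly acknowledge that this alone yields the useless $\int_0^t\omega/F\geq -Ct^{2+\lambda+\kappa}/F(0)$. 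The same issue contaminates the term $\int_0^t\Omega\,F'/F^2$. Even granting an estimate $\int_0^t\omega/F\geq -\alpha t$, you would still need $2\pi\alpha<1$ for a contradiction, and nothing in your argument pins $\alpha$ below $1/(2\pi)$. Finally, your asserted inequality $t(1-2\pi\alpha)+o(t)\leq Ct^{(1+\lambda)/2}$ only involves $\lambda$, so it cannot be ``violated for large $t$ precisely when $2\lambda+\kappa/2<1$''; the exponent $\kappa$ has disappeared from the comparison.

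The paper proceeds quite differently: it \emph{reduces} Theorem~\ref{thm:main} to Theorem~\ref{thm:premain} by proving directly that $h(t)/t\to 0$. Two ingredients you did not use are essential. First, since the subcylinder $\Delta_t$ is bounded by geodesic loops, Gauss--Bonnet gives $|\omega(\Delta_t)|<2\pi$, and together with Lemma~\ref{lem:inclusions} and the volume bound \eqref{eqn:volumeDelta} this yields the much sharper estimate $|\omega(t)|\leq 2\pi+O(t^{\kappa+2\lambda})$, far better than your $O(t^{1+\lambda+\kappa})$. Second, the coarea formula on the interval $[t,t+t^{-\kappa/2}]$ produces some $s$ with $h(s)=O(t^{2\lambda+\kappa/2})$, and then Lemma~\ref{lem:isoperimetric} together with the $\omega$ bound controls $h(t)-h(s)$ by $O(t^{2\lambda+\kappa/2})$ as well. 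This is exactly where the threshold $2\lambda+\kappa/2<1$ appears. Your scheme bypasses Lemma~\ref{lem:isoperimetric} and the Gauss--Bonnet step, and without them there is no mechanism to convert the integral (volume) control into the needed pointwise control.
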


\begin{remark}\label{rmk:upperbound}
  The same conclusion holds if we replace condition (i') by 
  \renewcommand{\labelenumi}{(\roman{enumi}'')}
  \begin{enumerate}
    \item \emph{the Gaussian curvature $K$ of $g$ satisfies $K(p)\leq c\,(\dist(p,p_0)+1)^\kappa$}.
  \end{enumerate}
\end{remark}

We will reduce Theorem \ref{thm:main} to Theorem \ref{thm:premain}. So, we have to prove that (i') and (ii') imply the existence of two rays $\gamma_1$, $\gamma_2$ converging to the different ends of $C$ such that the lengths $\mhdf^1(h^{\gamma_i}_t)$ of the horocycles $h^{\gamma_i}_t$ corresponding to $\gamma_i$ satisfy
\[\lim_{t\to\infty}\frac{1}{t}\mhdf^1(h^{\gamma_i}_t)=0\]
for $i=1,2$. Actually, we are going to prove that this is true for any two rays $\gamma_1$, $\gamma_2$ converging to the different ends of $C$.

The idea of the proof is as follows. By (ii') we can assume that \eqref{eqn:liminfi} and \eqref{eqn:disjoint} are satisfied for $\gamma_1$ and $\gamma_2$, and, as before, we let $b:C\to\mathrealpos$ denote the Busemann function associated  to the rays $\gamma_1$, $\gamma_2$. Similarly, we can assume that for all $t\in\mathrealpos$ the shortest noncontractible loops at $\gamma_1(t)$ and $\gamma_2(t)$ are disjoint and bound a compact subcylinder $\Delta_t\subseteq C$, cf.\ Remark \ref{rmk:loops}. Using Proposition \ref{prp:santalo2} and (ii') we obtain an estimate for $\mvol{2}(\Delta_{t_2}\setminus\Delta_{t_1})$ of the type
\begin{equation}\label{eqn:volumeDelta}
  \mvol{2}(\Delta_{t_2}\setminus\Delta_{t_1})\leq c (t_2-t_1+t_2^\lambda)t_2^\lambda
\end{equation}
for $t_2>t_1\geq 0$. In order to convert this into an estimate on $\mvol{2}(H_{t_2}\setminus H_{t_1})$, where as before $H_t=b^{-1}([-t,0])$, we will use the following elementary lemma.

\begin{lemma}\label{lem:inclusions}
  Let $l:\mathrealpos\to\mathrealpos$ be defined by
\[l(t)=l(\gamma_1(t))+l(\gamma_2(t)).\]
If $t\in\mathrealpos$ and $a\in (0,t)$ satisfy $l(t-a)\leq 2a$ and $l(t+a)\leq 2a$, then
\[\Delta_{t-a}\subseteq H_t\subseteq\Delta_{t+a}.\]
\end{lemma}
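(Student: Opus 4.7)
The plan is to combine the $1$-Lipschitz property of $b_{\gamma_1},b_{\gamma_2}$ on the boundary loops of $\Delta_{t\pm a}$ with two topological facts from Proposition \ref{prp:regular}: each horocycle $h^{\gamma_i}_s=b_{\gamma_i}^{-1}(-s)$ is a noncontractible simple closed curve, so $b_{\gamma_i}^{-1}((-\infty,-s))$ and $b_{\gamma_i}^{-1}((-s,\infty))$ are the two open connected components of $C\setminus h^{\gamma_i}_s$, containing respectively the $\gamma_i$-end and the opposite end. Throughout, write $\sigma_{i,s}$ for the shortest noncontractible geodesic loop at $\gamma_i(s)$, so $\partial\Delta_s=\sigma_{1,s}\cup\sigma_{2,s}$.

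For the boundary estimates, the bound $l(t\pm a)\leq 2a$ together with nonnegativity of the two summands forces $l(\gamma_j(t\pm a))\leq 2a$ for both $j$. Combined with $b_{\gamma_i}(\gamma_i(s))=-s$ and the $1$-Lipschitz inequality $|b_{\gamma_i}(q)-b_{\gamma_i}(p)|\leq L/2$ along a loop of length $L$ based at $p$, this yields
\[
 b_{\gamma_i}\geq -t\ \text{on}\ \sigma_{i,t-a}\quad\text{and}\quad b_{\gamma_i}\leq -t\ \text{on}\ \sigma_{i,t+a}.
\]
For the cross bound on $\sigma_{j,t-a}$ with $j\neq i$, I invoke \eqref{eqn:disjoint}: since $\gamma_j(t-a)\in b_{\gamma_j}^{-1}((-\infty,0))$ lies outside $b_{\gamma_i}^{-1}((-\infty,0))$, we have $b_{\gamma_i}(\gamma_j(t-a))\geq 0$, and the Lipschitz bound gives $b_{\gamma_i}\geq -a\geq -t$ on $\sigma_{j,t-a}$.

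For $\Delta_{t-a}\subseteq H_t$, for each $i$ the set $A_i^-:=b_{\gamma_i}^{-1}((-\infty,-t))$ is open, connected, unbounded, and contains the $\gamma_i$-end. The boundary estimates give $A_i^-\cap\partial\Delta_{t-a}=\emptyset$, so the connected $A_i^-$ lies in one component of $C\setminus\partial\Delta_{t-a}$. The two disjoint loops $\sigma_{1,t-a},\sigma_{2,t-a}$ cut $C$ into three pieces (the interior of $\Delta_{t-a}$ and two unbounded half-cylinders); since $A_i^-$ contains the $\gamma_i$-end, it must be the unbounded half-cylinder on the $\gamma_i$-side of $\sigma_{i,t-a}$. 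Hence $A_i^-\cap\Delta_{t-a}=\emptyset$, so $b_{\gamma_i}\geq -t$ on $\Delta_{t-a}$ for both $i$, giving $b\geq -t$ there, i.e.\ $\Delta_{t-a}\subseteq H_t$.

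For $H_t\subseteq\Delta_{t+a}$, let $V_i$ denote the closed non-$\gamma_i$-end side of $\sigma_{i,t+a}$, so $\Delta_{t+a}=V_1\cap V_2$. It suffices to show $b_{\gamma_i}^{-1}([-t,\infty))\subseteq V_i$ for each $i$; intersecting over $i$ then gives the inclusion. The open superlevel $A_i^+:=b_{\gamma_i}^{-1}((-t,\infty))$ is connected and contains the non-$\gamma_i$-end, while the estimate $b_{\gamma_i}\leq -t$ on $\sigma_{i,t+a}$ yields $A_i^+\cap\sigma_{i,t+a}=\emptyset$, so $A_i^+$ lies entirely in the open non-$\gamma_i$-end side of $\sigma_{i,t+a}$, whence $\overline{A_i^+}\subseteq V_i$. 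The regularity of $b_{\gamma_i}$ (Proposition \ref{prp:regular}) ensures $h^{\gamma_i}_t=\partial A_i^+$, so $b_{\gamma_i}^{-1}([-t,\infty))=A_i^+\cup h^{\gamma_i}_t=\overline{A_i^+}\subseteq V_i$. This last step, passing from $A_i^+$ to its closure via regularity, is the main subtle point: the non-strict bound $b_{\gamma_i}\leq -t$ on $\sigma_{i,t+a}$ in the edge case $l(\gamma_i(t+a))=2a$ allows $h^{\gamma_i}_t$ to touch $\sigma_{i,t+a}$, and it is regularity that still places $h^{\gamma_i}_t$ inside $V_i$.
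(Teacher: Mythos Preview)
Your proof is correct, but it takes a more elaborate route than the paper's. The paper works directly with the combined Busemann function $b$ rather than with $b_{\gamma_1}$ and $b_{\gamma_2}$ separately: since $b(\gamma_i(t-a))=-(t-a)$ and $b$ is $1$-Lipschitz, the whole boundary $\partial\Delta_{t-a}$ lies in $H_t$; then, because both $\Delta_{t-a}$ and $H_t$ are compact subcylinders of $C$ (so the unbounded components of $C\setminus H_t$ cannot meet $\Delta_{t-a}$ once $\partial\Delta_{t-a}\subseteq H_t$), one obtains $\Delta_{t-a}\subseteq H_t$ in a single stroke---no cross-bound via \eqref{eqn:disjoint}, no per-end analysis. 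The second inclusion is treated the same way, by showing $H_t\cap\partial\Delta_{t+a}=\emptyset$ and using connectedness of $H_t$ together with $\gamma_i(0)\in H_t\cap\Delta_{t+a}$. Your approach instead decomposes $H_t=b_{\gamma_1}^{-1}([-t,\infty))\cap b_{\gamma_2}^{-1}([-t,\infty))$ and argues about each half separately, which is what necessitates the cross-bound on $\sigma_{j,t-a}$ and the individual end arguments. What your version buys is an honest treatment of the borderline case $l(\gamma_i(t+a))=2a$, where $h_t^{\gamma_i}$ may touch $\sigma_{i,t+a}$: the paper's assertion that $H_t\cap\partial\Delta_{t+a}=\emptyset$ glosses over this possibility (one only gets $b\leq -t$, not $b<-t$, on $\partial\Delta_{t+a}$), though it is easily repaired by working with interiors and then taking closures, exactly as you do.
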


\begin{remark}
  If $l(t)\leq t^\lambda$ for all $t\geq t_0$, and if $\lambda<1$, then we have $l(t-t^\lambda)\leq 2t^\lambda$ and $l(t+t^\lambda)\leq 2t^\lambda$, and hence $\Delta_{t-t^\lambda}\subseteq H_t\subseteq\Delta_{t+t^\lambda}$, provided $t-t^\lambda\geq t_0$.
\end{remark}

We will continue the sketch of the proof of the theorem, and prove the lemma afterwards. Using Lemma \ref{lem:inclusions}, inequality \eqref{eqn:volumeDelta} and (ii'), we can find an upper bound on $\mvol{2}(H_{t_2}\setminus H_{t_1})$ of the type
  \begin{equation}\label{eqn:volumeH}
     \mvol{2}(H_{t_2}\setminus H_{t_1})\leq c(t_2-t_1+t_2^\lambda)t_2^\lambda 
  \end{equation}
if $t_2>t_1$ and $t_1$ is sufficiently large. By the coarea formula, cf.\ Corollary \ref{cor:diffinequality}, we have 
\begin{equation}
  \mvol{2}(H_{t_2}\setminus H_{t_1})=v(t_2)-v(t_1)=\int_{t_1}^{t_2} h(t)\, dt,
\end{equation}
where $h(t)=\mhdf^1(h_t)=\mhdf^1(h^{\gamma_1}_t)+\mhdf^1(h^{\gamma_2}_t)$. So \eqref{eqn:volumeH} provides a bound for the integrals of $h$ only. This does not suffice to prove directly our claim that $\lim\frac{1}{t}h(t)=0$. Additionally, we will use estimate (i') on the Gaussian curvature $K$ and Lemma \ref{lem:isoperimetric} to bound the variation of $h$. By Lemma \ref{lem:isoperimetric} we have for $t_2\geq t_1\geq 0$
\begin{equation}\label{eqn:isoperimetric2}
  h(t_2)-h(t_1)\geq -\int_{t_1}^{t_2}\omega(t)\, dt,
\end{equation}
where, as before, $\omega(t)=\int_{H_t}K\,d\mvol{2}$. So, we need an estimate on $\omega(t)$. Since the subcylinder $\Delta_t$ is bounded by two geodesic loops, we have
\[\left| \int_{\Delta_t} K\, d\mvol{2} \right|\leq 2\pi.\]
Now we can use Lemma \ref{lem:inclusions}, inequality \eqref{eqn:volumeDelta}, (i') and (ii') to show that
\begin{equation}\label{eqn:omegabound}
  \omega(t)< c\,t^{\kappa+2\lambda}+2\pi
\end{equation}
if $t$ is sufficiently large. Then we use \eqref{eqn:volumeH} - \eqref{eqn:isoperimetric2} for $t_2=t_1+t_1^{-\frac{\kappa}{2}}$ and combine this with \eqref{eqn:omegabound} to obtain an estimate of the type
\[h(t)\leq c\,t^{2\lambda+\frac{\kappa}{2}}.\]
Since we assumed that $2\lambda+\frac{\kappa}{2}<1$ this implies $\lim_{t\to\infty}\frac{1}{t}h(t)=0$. Now Theorem \ref{thm:premain} shows that $g$ is flat.
\begin{proof}[Proof of Lemma \ref{lem:inclusions}]
  We first show that $l(t-a)\leq 2a$ implies $\Delta_{t-a}\subseteq H_t$. Recall that $\partial\Delta_{t-a}$ consists of the shortest noncontractible loops based at $\gamma_1(t-a)$ and at $\gamma_2(t-a)$, both of length $\leq 2a$ by assumption. Since $\{\gamma_1(t-a),\gamma_2(t-a)\}\subseteq H_{t-a}=b^{-1}([-t+a,0])$ and since $b$ is $1$-Lipschitz we conclude that $\partial\Delta_{t-a}\subseteq H_t=b^{-1}([-t,0])$. Since $\Delta_{t-a}$ and $H_t$ are compact subcylinders of $C$ and $\partial\Delta_{t-a}\subseteq H_t$, we see that $\Delta_{t-a}\subseteq H_t$. To prove the inclusion $H_t\subseteq\Delta_{t+a}$ we use the same arguments as above to conclude that $H_t\cap\partial\Delta_{t+a}=\emptyset$. This implies that either $H_t\cap\Delta_{t+a}=\emptyset$ or $H_t\subseteq\Delta_{t+a}$. Since $\{\gamma_1(0),\gamma_2(0)\}\subseteq H_t\cap\Delta_{t+a}$, the second alternative has to be true.
\end{proof}

\begin{proof}[Finally, we give the details of the proof of Theorem \ref{thm:main}]
  In the following, the values of the constants in the inequalities have no significance. In order to simplify the estimates we will slightly increase $\kappa$ and $\lambda$, so that in addition to $2\lambda+\frac{\kappa}{2}<1$ the following holds:

There exists $t_0>1$ such that
\begin{equation}\label{eqn:boundl}
  l(t)\leq t^\lambda,\quad\textnormal{if } t\geq t_0,
\end{equation}
and such that
\begin{equation}\label{eqn:boundK}
  K(p)\geq -t^\kappa,\quad\textnormal{if } t\geq t_0\textnormal{ and }\dist(p,\Delta_0)\leq t.
\end{equation}

We choose $t_1>t_0$ such that $t_1-t_1^\lambda\geq t_0$. Then Lemma \ref{lem:inclusions} implies that
\begin{equation}\label{eqn:inclusions8}
  \Delta_{t-t^\lambda}\subseteq H_t\subseteq \Delta_{t+t^\lambda}
\end{equation}
for all $t\geq t_1$. If $t>s\geq t_1$ we conclude that
\begin{equation}\label{eqn:inclusions9}
  H_t\setminus H_s\subseteq \Delta_{t+t^\lambda}\setminus\Delta_{s-s^\lambda}.
\end{equation}
Next we will use Proposition \ref{prp:santalo2} to estimate $\mvol{2}(\Delta_t\setminus\Delta_s)$ for $t>s\geq 0$. The set $\Delta_t\setminus\Delta_s$ consists of the two subcylinders bounded by the shortest noncontractible loops based at the points $\gamma_1(s)$ and $\gamma_1(t)$, and at the points $\gamma_2(s)$ and $\gamma_2(t)$, respectively. By Proposition \ref{prp:santalo2} we can estimate the area of each of these subcylinders by $\frac{4}{\pi}\left( t-s+\frac{1}{2}(l(s)+l(t)\right)\left( l(s)+l(t)\right)$. Combining this with \eqref{eqn:boundl} and \eqref{eqn:inclusions9} we obtain for $t>s\geq t_1$:
\begin{eqnarray}\label{eqn:proof10}
  \mvol{2}(H_t\setminus H_s)
\leq\mvol{2}(\Delta_{t+t^\lambda}\setminus\Delta_{s-s^\lambda})
&\leq&\frac{16}{\pi}\left((t+t^\lambda)-(s-s^\lambda)+t^\lambda\right)t^\lambda\nonumber \\
&\leq& 16(t-s+t^\lambda)t^\lambda. 
\end{eqnarray}

Next we derive an upper estimate for $\omega(t)=\omega(H_t)=\int_{H_t}K\,d \mvol{2}$. If $t\geq t_1$, we can use \eqref{eqn:inclusions8} and obtain
\begin{equation}\label{eqn:proof11}
  \omega(H_t)=\omega(\Delta_{t+t^\lambda})-\omega(\Delta_{t+t^\lambda}\setminus H_t).
\end{equation}
Since $\Delta_{t+t^\lambda}$ is a subcylinder bounded by two geodesic loops we have
\begin{equation}\label{eqn:proof12}
  |\omega(\Delta_{t+t^\lambda})|<2\pi
\end{equation}
by the Gauss-Bonnet formula. Now we want to use the curvature estimate \eqref{eqn:boundK} to estimate $\omega(\Delta_{t+t^\lambda}\setminus H_t)$. To apply \eqref{eqn:boundK} we need to bound $\dist (p,\Delta_0)$ for $p\in\Delta_{t+t^\lambda}\setminus\Delta_0$.  Each $p\in \Delta_{t+t^\lambda}\setminus\Delta_0$ lies on a unique shortest noncontractible loop based at $\gamma_i(s)$ for some $i\in\{1,2\}$ and some $s\in(0,t+t^\lambda]$. Since $\gamma_i(0)\in\Delta_0$ we have
\[\dist(p,\Delta_0)\leq t+t^\lambda+\frac{1}{2}l(s)\leq t+2t^\lambda.\]
So, \eqref{eqn:boundK}, \eqref{eqn:inclusions8} and \eqref{eqn:proof10} imply for $t\geq t_1$:
\begin{eqnarray}\label{eqn:proof13}
  \omega(\Delta_{t+t^\lambda}\setminus H_t)&\geq& -(t+2t^\lambda)^\kappa\,\mvol{2}(\Delta_{t+t^\lambda}\setminus H_t)\\
&\geq& -(t+2t^\lambda)^\kappa\,\mvol{2}(\Delta_{t+t^\lambda}\setminus \Delta_{t-t^\lambda})\nonumber\\
&\geq& -48\,t^{\kappa+2\lambda}.\nonumber
\end{eqnarray}
From \eqref{eqn:proof11} - \eqref{eqn:proof13} we obtain for $t\geq t_1$:
\begin{equation}\label{eqn:proof14}
  \omega(t)=\omega(H_t)\leq 2\pi+48\,t^{\kappa+2\lambda}.
\end{equation}
Actually, a similar argument applied to the equality
\begin{equation}\label{eqn:proof15}
  \omega(t)=\omega(H_t)=\omega(\Delta_{t-t^\lambda})+\omega(H_t\setminus\Delta_{t-t^\lambda})
\end{equation}
shows that also $\omega(t)\geq -2\pi-48\,t^{\kappa+2\lambda}$, so that in fact
\[|\omega(t)|\leq 2\pi+48\,t^{\kappa+2\lambda}.\]
But for our purpose \eqref{eqn:proof14} is sufficient. Similarly, if instead of $\eqref{eqn:boundK}$ we have
\begin{equation}\tag{8.7'}\label{eqn:boundK'}
  K(p)\leq t^\kappa,\quad\textnormal{if } t\geq t_0\textnormal{ and }\dist(p,\Delta_0)\leq t,
\end{equation}
then the same argument applied to \eqref{eqn:proof15} shows that \eqref{eqn:proof14} also holds in this case.

To finish the argument we consider an arbitrary $t\geq t_1$ and recall that
\[\mvol{2}(H_{t+t^{-\frac{\kappa}{2}}}\setminus H_t)=\int_t^{t+t^{-\frac{\kappa}{2}}}h(t')d{t'}\]
by Corollary \ref{cor:diffinequality}. Hence, by \eqref{eqn:proof10}, there exists $s\in[t,t+t^{-\frac{\kappa}{2}}]$ such that
\[h(s)\;\leq\; 16\,(t^{-\frac{\kappa}{2}}+(t+t^{-\frac{\kappa}{2}})^\lambda)\,(t+t^{-\frac{\kappa}{2}})^\lambda\,t^{\frac{\kappa}{2}}\;\leq\; 64\,(t^{-\frac{\kappa}{2}}+t^\lambda)\,t^{\lambda+\frac{\kappa}{2}}.\]
Now Lemma \ref{lem:isoperimetric}, i.e.\ \eqref{eqn:isoperimetric2}, implies that
\[h(s)-h(t)\geq -\int_t^s\omega(t')\, d{t'}.\]
Using \eqref{eqn:proof14} and $t\leq s\leq t+t^{-\frac{\kappa}{2}}\leq t+1$ we obtain
\[h(t)\leq h(s)+\int_t^s\omega(t')\, d{t'}\leq 64\,(t^{-\frac{\kappa}{2}}+t^\lambda)\,t^{\lambda+\frac{\kappa}{2}} +(2\pi+48\,(t+1)^{\kappa+2\lambda})t^{-\frac{\kappa}{2}}.\]
Since $2\lambda+\frac{\kappa}{2}<1$, this implies $\lim_{t\to\infty}\frac{1}{t}h(t)=0$. Since $h(t) = \mhdf^1(h^{\gamma_1}_t) + \mhdf^1(h^{\gamma_2}_t)$, cf.\ \eqref{eqn:hsum}, we see that the assumptions of Theorem \ref{thm:premain} are satisfied in our situation. Hence the Riemannian metric $g$ is flat.
\end{proof}

\section{Appendix}
\begin{proof}[Proof of Lemma \ref{lem:isoperimetric}]
Approximate the compact subcylinder $G\defeq H_{t_2}\subseteq C$, provided with the Riemannian distance $\dist$, by an adequate sequence of polyhedra $(G^i,\dist^i)$, as in \cite[3.1.1]{BZG}. Denote by
\begin{align*}
  P^i_t&=\{p\in G^i: \dist^i(p,\partial G^i)<t\}, &f^i(t)&=\mhdf^2_{\dist^i}(P^i_t),\\
  l^i_t&=\{p\in G^i: \dist^i(p,\partial G^i)=t\}, &l^i(t)&=\mhdf^1_{\dist^i}(l^i_t),\\
  r^i&=\sup\{\dist^i(p,\partial G^i):p\in G^i\}.
\end{align*}
Whenever we omit the index $i$, we shall mean the corresponding objects for $(G,d)$, e.g.\ $l_t=\{p\in G: \dist (p,\partial G)=t\}$, $l(t)=\mhdf^1_d(l_t)$. By Corollary \ref{cor:distancefunction} (ii) we have 
\[b(p)=\dist (p,\partial G)-t_2\]
for all $p\in G\setminus H_0$, and hence
\begin{equation}\label{eqn:lh}
  h_t=l_{t_2-t} \;\text{ and }\; H_t=G\setminus P_{t_2-t}
\end{equation}
for all $0\leq t\leq t_2$. The Gauss-Bonnet theorem \cite[2.1.5]{BZG} implies that we have
\begin{equation}\label{eqn:gaussbonnetP}
  \omega(t)+\omega(P_{t_2-t})=-\tau
\end{equation}
for all $t_1\leq t\leq t_2$. Here $\tau$ is the rotation of $\partial G$, i.e.\ the integral of the geodesic curvature of $\partial G$ with respect to the inner normal. Let $T\defeq t_2-t_1$. Then, by \eqref{eqn:lh} and \eqref{eqn:gaussbonnetP}, the following inequality is equivalent to inequality \eqref{eqn:isoperimetric}:
\begin{equation}\label{eqn:isoperimetric'}
  l(T)-l(0)\leq\int_0^T -\omega(P_t)-\tau \,dt.
\end{equation}

To prove this inequality, we use the approximation and the lemma of Burago and Zalgaller. The lemma states, that inequality \eqref{eqn:isoperimetric'} is true for the polyhedra $(G^i,d^i)$. It then remains to show that it is still true in the limit case $(G,d)$. We begin by recalling the lemma of Burago and Zalgaller:
\begin{lemma}[{\cite[3.2.3]{BZG}}]
  The function $f^i$ is continuous on $[0,r^i)$ and twice differentiable on $[0,r_i)\setminus A_i$, where $A_i\subseteq (0,r_i)$ is finite. Moreover, for $t\in [0,r_i)\setminus A_i$,
  \begin{equation}\label{eqn:burago}
    (f^i)'(t)=l^i(t),\; (f^i)''(t)\leq -\omega^i(\overline{P}^i_t)-\tau^i.
  \end{equation}
Here $\overline{P}^i_t=P^i_t\cup l^i_t$, $l^i(t)=\mhdf^1_{d^i}(l^i_t)$ is the length of $l^i_t$, and $\tau^i$ is the rotation of $\partial G^i$. At the singular points $t\in A_i$ limits (from both sides) of the first derivative exist, and we have
\begin{equation}\label{eqn:jump}
  (f^i)'(t-0)\geq (f^i)'(t+0).
\end{equation}
\end{lemma}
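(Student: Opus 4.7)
The plan is to exploit the piecewise-flat structure of the polyhedron $(G^i,\dist^i)$: it consists of finitely many Euclidean triangles glued along their edges, with finitely many interior cone points and a piecewise geodesic boundary with finitely many corners. The function $p\mapsto\dist^i(p,\partial G^i)$ is then a piecewise-affine 1-Lipschitz function whose cut locus in $G^i$ is a finite 1-complex. I would first enumerate the finite set $A_i\subseteq(0,r^i)$ as the set of critical values of this function: the distances $\dist^i(v,\partial G^i)$ to interior cone points $v$, the distances at which two branches of the cut locus meet or bifurcate, and the local extrema of $\dist^i(\cdot,\partial G^i)$ along edges of the cut locus. Finiteness of $A_i$ is immediate from the finite combinatorial structure of the cut locus.

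The continuity of $f^i$ on $[0,r^i)$ and the formula $(f^i)'(t)=l^i(t)$ at regular values would then follow from the coarea formula applied to the Lipschitz function $\dist^i(\cdot,\partial G^i)$, whose gradient has unit length almost everywhere. On each maximal open interval $I$ of $[0,r^i)\setminus A_i$, the equidistant $l^i_t$ has a fixed combinatorial type: it is a disjoint union of finitely many pieces, each either a straight segment (inside a flat strip parallel to a boundary edge of $\partial G^i$) or a circular arc of radius $t$ centered at a reflex corner of $\partial G^i$. The endpoints of these pieces lie on the cut locus and move along it with constant velocity as $t$ varies in $I$, so $l^i(t)$ is an affine function on $I$. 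This gives twice differentiability of $f^i$ on $[0,r^i)\setminus A_i$, with $(f^i)'(t)=l^i(t)$ and $(f^i)''(t)=(l^i)'(t)$ constant on $I$.

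For the crucial inequality $(f^i)''(t)\leq -\omega^i(\overline{P}^i_t)-\tau^i$, the plan is to compute $(l^i)'(t)$ directly from the geometry of $l^i_t$ and then compare with the right-hand side via Gauss--Bonnet applied to $\overline{P}^i_t$. A straight segment of $l^i_t$ loses length at rate $\cot(\alpha/2)$ at each endpoint coming from a convex corner of $\partial G^i$ of interior angle $\alpha\in(0,\pi)$, while a circular arc near a reflex corner of interior angle $\alpha\in(\pi,2\pi)$ grows at rate $\alpha-\pi$. Gauss--Bonnet on $\overline{P}^i_t$ (whose boundary consists of $\partial G^i$ together with $l^i_t$, with corners at both) expresses $-\omega^i(\overline{P}^i_t)-\tau^i$ as a sum of corner contributions from the same set of vertices. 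Comparing the two expressions term by term, the desired inequality reduces to the elementary estimate $2\cot(\alpha/2)+\alpha-\pi\geq 0$ for $\alpha\in(0,\pi]$, with equality only at $\alpha=\pi$. The jump inequality $(f^i)'(t-0)\geq (f^i)'(t+0)$ at $t\in A_i$ is established by identifying the combinatorial event occurring at $t$: a cone-point crossing, a saddle merge, or a cut-locus bifurcation all cause either a piece of the equidistant to collapse or two pieces to merge into one, each of which strictly reduces the total length.

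The main obstacle will be the careful case-by-case verification that the affine description of $l^i$ and the Gauss--Bonnet accounting go through for every combinatorial type of event that can occur in a polyhedron, particularly at interior vertices of the cut locus where three or more equidistant branches meet and where the local model is no longer that of a single strip. The borderline case $\alpha=\pi$ is precisely where the scalar inequality degenerates to equality, reflecting that smooth portions of the equidistant evolution saturate the Gauss--Bonnet identity; the strict inequality in the lemma captures the extra length loss contributed by genuine corners.
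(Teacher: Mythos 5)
First, a point of comparison: the paper does not prove this lemma at all --- it is imported verbatim from Burago--Zalgaller \cite[3.2.3]{BZG}, and the appendix only supplies the limit argument needed to pass from the polyhedra $(G^i,\dist^i)$ to $(G,\dist)$. So your proposal is a from-scratch proof of a quoted result, and it must be judged on its own merits. Its overall architecture (cut locus of $\partial G^i$, coarea formula giving $(f^i)'=l^i$, Gauss--Bonnet accounting of the loss/gain rates of the inner equidistants, and the scalar inequality $2\cot(\alpha/2)+\alpha-\pi\ge 0$ on $(0,\pi]$) is indeed the standard route to such polyhedral statements.

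However, two steps, as written, are not correct. (a) $l^i$ is not piecewise affine and $(f^i)''$ is not locally constant: a cut-locus branch along which an edge-wave competes with a point-wave (emanating from a reflex corner of $\partial G^i$, or from an interior cone point the front has already swept over) is a parabolic arc, and two point-waves with different starting radii give a hyperbolic branch; the truncation lengths of the straight pieces and the angular extents of the arcs are then genuinely nonlinear functions of $t$. What one actually gets is piecewise analyticity of $l^i$ off a finite set, and the inequality for $(f^i)''$ must be obtained by differentiating the full expression for $l^i(t)$, junction motion included, not by comparing constant rates term by term. (b) Your Gauss--Bonnet bookkeeping involves only corners of $\partial G^i$, but $\omega^i$ is concentrated at the interior cone points, so these must enter the comparison: a cone point of total angle $\theta_v>2\pi$ inserts a circular arc of angular extent $\theta_v-2\pi=-\omega_v$ into the front (equality); one with $\pi<\theta_v<2\pi$ spawns a new cut branch at which the front loses length at rate $2\cot\bigl((\theta_v-\pi)/2\bigr)\ge 2\pi-\theta_v=\omega_v$, which is your scalar inequality again but at the forward angle $\theta_v-\pi$; and one with $\theta_v\le\pi$ produces a genuine jump of $l^i=(f^i)'$, to be booked under the jump inequality rather than under the second-derivative inequality. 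Moreover, front-front collisions at cut points carrying neither a boundary corner nor a cone point contribute extra loss with no counterpart on the right-hand side, and one must observe that these terms are $\le 0$ because the two fronts meet at a forward angle at most $\pi$. (A small misreading besides: the displayed inequality for $(f^i)''$ is not strict, so nothing needs to be ``captured'' by strictness at $\alpha=\pi$.) None of this breaks your strategy --- it is exactly the case analysis you defer at the end --- but as it stands the sketch contains a false intermediate claim (affineness) and omits precisely the configurations that carry the curvature term $\omega^i(\overline{P}^i_t)$.
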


 Integrating inequality \eqref{eqn:burago} and taking \eqref{eqn:jump} into account, we obtain
\begin{equation}
  \label{eqn:ineq1}
  l^i(b)-l^i(a)\leq\int_a^b -\omega^i(\overline{P}^i_t)-\tau^i\, dt,
\end{equation}
for $0\leq a\leq b\leq T$ and every $i\in\mathnatural$. For $[a,b]=[0,T]$ this implies that \eqref{eqn:isoperimetric'} is true for the polyhedra $(G^i,\dist^i)$. Next we show that we obtain inequality \eqref{eqn:isoperimetric'} by taking the limit $i\to\infty$ in \eqref{eqn:ineq1}. 

We will need that $\mhdf^2_{\dist^i}(P^i_t)\to \mhdf^2_\dist(P_t)$, i.e.\ $f^i(t)\to f(t)$, and $\omega^i(\closure{P}^i_t)\to\omega(P_t)$, for all $t\in[0,T]$. The proofs are given below. Observe that $\tau^i\to\tau$ by definition,
 and that we have the weak convergence $(\omega^i)^\pm\to\omega^\pm$ for the positive and the negative parts of the curvature measures by \cite[3.1.1.(5)]{BZG}. Thus the integrands on the right hand side of \eqref{eqn:ineq1} are uniformly bounded by some constant $M<\infty$ for every $i\in\mathnatural$ and every $t\in[0,T]$. Lebesgue's theorem on dominated convergence implies that the right hand side of \eqref{eqn:ineq1} for $[a,b]=[0,T]$ converges to the right hand side of \eqref{eqn:isoperimetric'}. We also see from \eqref{eqn:ineq1} that $l^i(b)\leq l^i(a)+M\,(b-a)$ for every $i\in\mathnatural$ and $[a,b]\subseteq[0,T]$. Now we use the continuity of $l$, cf.\ \eqref{eqn:lh} and Corollary \ref{cor:diffinequality}, the coarea formula for $f$ and the $f^i$, and the pointwise convergence $f^i\to f$. Then we obtain
\begin{eqnarray*}
  l(t)&=&\lim_{\delta\downarrow 0}\frac{1}{\delta}(f(t+\delta)-f(t))\;=\;\lim_{\delta\downarrow 0}\lim_{i\to\infty}\frac{1}{\delta}(f^i(t+\delta)-f^i(t))\\
&=&\lim_{\delta\downarrow 0}\lim_{i\to\infty}\frac{1}{\delta}\int_t^{t+\delta}l^i\,d\mlge\;\leq\;\lim_{\delta\downarrow 0}\,\liminf_{i\to\infty}(l^i(t)+M\delta)\;=\;\liminf_{i\to\infty}l^i(t)
\end{eqnarray*}
for every $t\in[0,T]$. Together with the convergence $l^i(0)\to l(0)$, which holds by \cite[3.1.1.(4)]{BZG}, this proves the claim.

It remains to show that $f^i(t)\to f(t)$ and $\omega^i(\closure{P}^i_t)\to\omega(P_t)$ for every $t\in[0,T]$. Let $t\in[0,T]$ and write
\begin{equation}\label{eqn:fisifs}
  f^i(t)-f(t)=(\mhdf^2_{\dist_i}(P^i_t)-\mhdf^2_{\dist_i}(P_t)) + (\mhdf^2_{\dist_i}(P_t)-\mhdf^2_\dist(P_t)).
\end{equation}
By \cite[{Chapter VIII, Theorem 9}]{AZI}, we have that within any compact subset $Q\subseteq G$, the areas $\mhdf^2_{\dist^i}$ converge weakly to the area $\mhdf^2_\dist$. Since $\partial P_t=l_0\cup l_t$, we have $\mhdf^2_d(\partial P_t)=0$. Thus the second summand approaches zero, see e.g.\ \cite[VIII.4.10]{JEM}. Since the boundary curves $\partial G^i$ converge to $\partial G$ in the Hausdorff sense, and by the uniform convergence of the metrics $\dist^i\to \dist$, cf.\ \cite[3.1.1.(1)]{BZG}, we have that the distance functions $\dist^i(\cdot,\partial G^i)$ converge uniformly to $\dist (\cdot,\partial G)$. Hence, for large $i\in\mathnatural$, the symmetric difference $P_t^i \Delta P_t$ is contained in an arbitrarily small strip about $l_0$ and $l_t$. To be precise, let $\epsilon>0$, and, by continuity of $f$, choose $\delta>0$ such that the set $S_\epsilon\defeq\bigcup_{s\in[0,\delta]}l_s\cup\bigcup_{s\in[t-\delta,t+\delta]}l_s$ has area $\mhdf^2_\dist(S_\epsilon)<\epsilon$. Since $P^i_t\Delta P_t\subseteq S_\epsilon$ for almost all $i\in\mathnatural$ and by the weak convergence $\mhdf^2_{\dist_i}\to \mhdf^2_\dist$ again, we get $\mhdf^2_{\dist_i}(P^i_t\Delta P_t)\leq \mhdf^2_{\dist_i}(S_\epsilon)\leq \mhdf^2_\dist(S_\epsilon)+\epsilon\leq 2\epsilon$ for almost all $i\in\mathnatural$. This implies that the first summand on the right hand side of \eqref{eqn:fisifs} approaches zero, too. Hence $f^i(t)\to f(t)$ for every $t\in[0,T]$. The proof of the second claim, i.e.\ $\omega^i(\closure{P}^i_t)\to\omega(P_t)$ for all $t\in[0,T]$, is analogous, if one uses $(\omega^i)^\pm\to\omega^\pm$, cf.\ \cite[3.1.1.(5)]{BZG}.
\end{proof}

\section*{Acknowledgment}
This research was supported by the DFG Collaborative Research Center SFB~TR~71. The authors thank K. Burns and U. Lang for helpful discussions and the referee for careful reading.

{\noindent{}Mathematisches Institut\\
Universit\"at Freiburg\\
Eckerstr.~1\\
79104 Freiburg\\
Germany\\
\email{\textit{E-mail address}: {\tt victor.bangert@math.uni-freiburg.de}}\\
\email{\textit{E-mail address}: {\tt patrick.emmerich@math.uni-freiburg.de}}}

\end{document}